\numberwithin{equation}{section}
\newtheoremstyle{thmlemcorr}{10pt}{10pt}{\itshape}{}{\bfseries}{.}{10pt}{{\thmname{#1}\thmnumber{ #2}\thmnote{ (#3)}}}
\newtheoremstyle{thmlemcorr*}{10pt}{10pt}{\itshape}{}{\bfseries}{.}\newline{{\thmname{#1}\thmnumber{ #2}\thmnote{ (#3)}}}
\newtheoremstyle{defi}{10pt}{10pt}{\itshape}{}{\bfseries}{.}{10pt}{{\thmname{#1}\thmnumber{ #2}\thmnote{ (#3)}}}
\newtheoremstyle{remexample}{10pt}{10pt}{}{}{\bfseries}{.}{10pt}{{\thmname{#1}\thmnumber{ #2}\thmnote{ (#3)}}}
\newtheoremstyle{ass}{10pt}{10pt}{}{}{\bfseries}{.}{10pt}{{\thmname{#1}\thmnumber{ A#2}\thmnote{ (#3)}}}
\theoremstyle{thmlemcorr}
\newtheorem{theorem}{Theorem}
\numberwithin{theorem}{section}
\newtheorem{lemma}[theorem]{Lemma}
\newtheorem{corollary}[theorem]{Corollary}
\newtheorem{proposition}[theorem]{Proposition}
\newtheorem{conjecture}[theorem]{Conjecture}
\theoremstyle{thmlemcorr*}
\newtheorem{theorem*}{Theorem}
\newtheorem{lemma*}[theorem]{Lemma}
\newtheorem{corollary*}[theorem]{Corollary}
\newtheorem{proposition*}[theorem]{Proposition}
\newtheorem{problem*}[theorem]{Problem}
\newtheorem{conjecture*}[theorem]{Conjecture}
\theoremstyle{defi}
\theoremstyle{remexample}
\newtheorem{remark}[theorem]{Remark}
\newtheorem{example}[theorem]{Example}
\theoremstyle{ass}
\newcommand{\Crm}{\mathrm{C}}
\newcommand{\Lrm}{\mathrm{L}}
\newcommand{\Wrm}{\mathrm{W}}
\newcommand{\Acal}{\mathcal{A}}
\newcommand{\Ecal}{\mathcal{E}}
\newcommand{\Fcal}{\mathcal{F}}
\newcommand{\Hcal}{\mathcal{H}}
\newcommand{\Ical}{\mathcal{I}}
\newcommand{\Lcal}{\mathcal{L}}
\newcommand{\Mcal}{\mathcal{M}}
\newcommand{\Pcal}{\mathcal{P}}
\newcommand{\Ucal}{\mathcal{U}}
\newcommand{\Ebf}{\mathbf{E}}
\newcommand{\Mbf}{\mathbf{M}}
\newcommand{\Pbf}{\mathbf{P}}
\newcommand{\Qbf}{\mathbf{Q}}
\newcommand{\Ybf}{\mathbf{Y}}
\newcommand{\Abb}{\mathbb{A}}
\renewcommand{\Bbb}{\mathbb{B}}
\newcommand{\Sbb}{\mathbb{S}}
\DeclareMathOperator{\id}{id}
\DeclareMathOperator{\supmod}{sup}
\DeclareMathOperator{\curl}{curl}
\DeclareMathOperator{\dist}{dist}
\DeclareMathOperator{\rank}{rank}
\DeclareMathOperator{\spn}{span}
\DeclareMathOperator{\ran}{ran}
\DeclareMathOperator{\supp}{supp}
\newcommand{\ee}{\mathrm{e}}
\newcommand{\ii}{\mathrm{i}}
\newcommand{\setb}[2]{\bigl\{\, #1 \ \ \textup{\textbf{:}}\ \ #2 \,\bigr\}}
\newcommand{\setBB}[2]{\biggl\{\, #1 \ \ \textup{\textbf{:}}\ \ #2 \,\biggr\}}
\newcommand{\norm}[1]{\|#1\|}
\newcommand{\abs}[1]{|#1|}
\newcommand{\absn}[1]{|#1|}
\newcommand{\absb}[1]{\bigl|#1\bigr|}
\newcommand{\absBB}[1]{\biggl|#1\biggr|}
\newcommand{\floor}[1]{\lfloor #1 \rfloor}
\newcommand{\dpr}[1]{\langle #1 \rangle}	
\newcommand{\dprn}[1]{\langle #1 \rangle}
\newcommand{\dprb}[1]{\bigl\langle #1 \bigr\rangle}
\newcommand{\ddprb}[1]{\bigl\langle\hspace{-2.5pt}\bigl\langle #1 \bigr\rangle\hspace{-2.5pt}\bigr\rangle}
\newcommand{\cl}[1]{\overline{#1}}
\newcommand{\di}{\mathrm{d}}
\newcommand{\dd}{\;\mathrm{d}}
\newcommand{\N}{\mathbb{N}}
\newcommand{\R}{\mathbb{R}}
\newcommand{\C}{\mathbb{C}}
\newcommand{\loc}{\mathrm{loc}}
\newcommand{\sym}{\mathrm{sym}}
\newcommand{\skw}{\mathrm{skew}}
\newcommand{\ONE}{\mathbbm{1}}
\newcommand{\toweakstar}{\overset{*}\rightharpoonup}
\newcommand{\todown}{\downarrow}
\newcommand{\embed}{\hookrightarrow}
\newcommand{\conv}{\star}
\newcommand{\sbullet}{\begin{picture}(1,1)(-0.5,-2)\circle*{2}\end{picture}}
\newcommand{\frarg}{\,\sbullet\,}
\newcommand{\BV}{\mathrm{BV}}
\newcommand{\BD}{\mathrm{BD}}
\newcommand{\LD}{\mathrm{LD}}
\newcommand{\BDY}{\mathbf{BDY}}
\newcommand{\toY}{\overset{\Ybf}{\to}}
\newcommand{\eps}{\epsilon}
\DeclareMathOperator{\Curl}{Curl}
\DeclareMathOperator{\Tan}{Tan}
\DeclareMathOperator{\Div}{div}
\DeclareMathOperator{\Gr}{Gr}
\newcommand{\term}[1]{\emph{#1}}
\newcommand{\proofstep}[1]{\textit{#1}}
\newcommand{\Rdds}{\R^{d \times d}_\sym}
\def\Xint#1{\mathchoice 
{\XXint\displaystyle\textstyle{#1}}%
{\XXint\textstyle\scriptstyle{#1}}%
{\XXint\scriptstyle\scriptscriptstyle{#1}}%
{\XXint\scriptscriptstyle\scriptscriptstyle{#1}}%
\!\int} 
\def\XXint#1#2#3{{\setbox0=\hbox{$#1{#2#3}{\int}$} 
\vcenter{\hbox{$#2#3$}}\kern-.5\wd0}} 
\def\dashint{\,\Xint-}
\newcommand{\restrict}{\begin{picture}(10,8)\put(2,0){\line(0,1){7}}\put(1.8,0){\line(1,0){7}}\end{picture}}
\renewcommand{\eps}{\varepsilon}
\renewcommand{\phi}{\varphi}
\title[Fine properties of BD maps]{Fine properties of functions of bounded deformation -- an approach via linear PDEs}
\author[G.~De Philippis]{Guido De Philippis}
\address{\textit{G.~De Philippis:} Courant Institute of Mathematical Sciences, New York University, 251 Mercer St., New York, NY 10012, USA.}
\email{guido@cims.nyu.edu}
\author[F.~Rindler]{Filip Rindler}
\address{\textit{F.~Rindler:} Mathematics Institute, University of Warwick, Coventry CV4 7AL, UK, and The Alan Turing Institute, British Library, 96 Euston Road, London NW1 2DB, UK.}
\email{F.Rindler@warwick.ac.uk}
\begin{document}

\maketitle


\begin{abstract}
In this survey we collect some recent results obtained by the authors and collaborators concerning the fine structure of functions of bounded deformation (BD). These maps are $\mathrm{L}^1$-functions with the property that the symmetric part of their distributional derivative is representable as a bounded (matrix-valued) Radon measure. It has been known for a long time that for a (matrix-valued) Radon measure the property of being a symmetrized gradient can be characterized by an under-determined second-order PDE system, the Saint-Venant compatibility conditions. This observation gives rise to a new approach to the fine properties of BD-maps via the theory of PDEs for measures, which complements and partially replaces classical arguments. Starting from elementary observations, here we elucidate the ellipticity arguments underlying this recent progress and give an overview of the state of the art. We also present some open problems.

%
%

\vspace{4pt}

\noindent\textsc{Date:} \today{}. 
\end{abstract}


\section{Introduction}

In this survey we review a PDE approach to the study of fine properties of functions of bounded deformation (BD), which was recently developed by the authors. In particular, we will show how this approach allows to characterize the structure of the singular part of the symmetrized derivative and also to recover some known structure properties of these functions in an easier and more robust way.

Functions of bounded deformation are fundamental in the analysis of a large number of problems from mechanics, most notably in the theory of (linearized) elasto-plasticity, damage, and fracture; we refer to~\cite{Temam83book,FuchsSeregin00book,AmbrosioCosciaDalMaso97,Suquet78,Suquet79,MatthiesStrangChristiansen79,Kohn79thesis,TemamStrang80,Kohn82} and the references contained therein. A common feature of these theories is that the natural coercivity of the problem only yields a-priori bounds in the $\Lrm^1$-norm of the symmetrized gradient 
\begin{equation*} \label{eq:sym_grad}
  \Ecal u := \frac{1}{2} \bigl( \nabla u + \nabla  u^T \bigr)
\end{equation*}
of a map $u \colon \Omega \subset \R^d \to \R^d$. As $\Lrm^1$ is not reflexive, such $\Lrm^1$-norm bounds do not allow for the selection of weakly converging subsequences. This issue is remedied by enlarging $\Lrm^1$ to the space of finite Radon measures, where now a uniform bound on the total variation norm permits one to select a weakly* converging subsequence.

Given an open set \(\Omega\subset \R^d\) with Lipschitz boundary, the space $\BD(\Omega)$ of \term{functions of bounded deformation} is then the space of functions $u \in \Lrm^1(\Omega;\R^d)$  such that the distributional \term{symmetrized derivative} 
\[
  Eu := \frac{1}{2} \bigl( Du + Du^T \bigr)
\]
is (representable as) a finite Radon measure, $Eu \in \Mcal(\Omega;\Rdds)$, where \(\Rdds\) denotes the space of \(d\times d\) symmetric matrices. The space $\BD(\Omega)$ of functions of bounded deformation is a (non-reflexive) Banach space under the norm
\[
  \norm{u}_{\BD(\Omega)} := \norm{u}_{\Lrm^1(\Omega;\R^d)} + \abs{Eu}(\Omega),
\]
where $\abs{Eu}$ denotes the total variation measure of $Eu$.

One particularly important feature of BD-maps is that the symmetrized derivative $Eu$ may contain a \emph{singular part}, i.e., a measure that is not absolutely continuous with respect to Lebesgue measure. This may, for instance, correspond to concentrations of strain in the model under investigation. As we will show in the sequel,  the allowed ``shapes'' of these concentrations are quite restricted, merely due to the fact that they occur in a symmetrized gradient. These \emph{rigidity} considerations  play a prominent role in the analysis of a model, for instance, in the integral representation and lower semicontinuity theory for functionals defined on \(\BD\) and in the characterization of Young measures generated by symmetrized gradients.

\subsection*{Fine structure results}

The study of the fine structure of BD-maps started in the PhD thesis of Kohn~\cite{Kohn79thesis}, and was then systematically carried out by Ambrosio--Coscia--Dal Maso in~\cite{AmbrosioCosciaDalMaso97}; further recent results on the fine properties of $\BD$ can be found in~\cite{Babadjian15,ContiFocardiIurlano18,GmeinederRaita18?,SpectorVanSchaftingen18?} and the references therein. Classically, the analysis of the space $\BD(\Omega)$ has been modelled on the analysis of the space of \term{functions of bounded variation}, \(\BV(\Omega;\R^\ell)\),  i.e., those functions \(u\in \Lrm^1(\Omega;\R^\ell)\) such that the distributional derivative \(Du\) can be represented as a finite Borel measure, \(Du \in \Mcal(\Omega;\R^{\ell\times d})\), where \(\R^{\ell\times d}\) is the space of \(\ell \times d\) matrices. The theory of BV-maps in multiple dimensions goes back to De Giorgi~\cite{DeGiorgi54} and has become a fundamental tool in the Calculus of Variations and in Geometric Measure Theory.

Starting from the seminal works of Federer,~\cite{Federer69book}, we now have a complete understanding of  the fine structure of BV-maps, which we summarize in the following, see~\cite{AmbrosioFuscoPallara00book} for details and proofs. For  a vector  Radon measure \(\mu\in \Mcal(\R^d;\R^n)\) we write \(\mu=\mu^a+\mu^s\) for its Lebesgue--Radon--Nikod\'{y}m decomposition with respect to Lebesgue measure. Denoting by \(\abs{\mu}\) the total variation measure of $\mu$, we call
 \[
 \frac{\di \mu}{\di \abs{\mu}}(x):=\lim_{r\to 0}  \frac{\mu(B_r(x))}{\abs{\mu}(B_r(x))}
 \] 
the \emph{polar vector}, whose existence for \(\abs{\mu}\)-almost every $x$ is ensured by the Besicovitch differentiation theorem (see~\cite[Theorem~2.22]{AmbrosioFuscoPallara00book}). We then have the following (amalgamated) structure result in $\BV$:

\begin{theorem}\label{thm:Bvstruct} For \(u\in \BV_\loc(\R^d;\R^\ell)\), we  can decompose \(Du\) as
\[
Du=D^a u+D^s u=D^a u+D^ju+D^c u.
\]
Here:
\begin{itemize}
\item[(i)] \(D^a u=\nabla u \, \Lcal^d\)  is the \term{absolutely continuous part} of \(Du\) (with respect to Lebesgue measure). Its density \(\nabla u\in \Lrm^1_\loc(\Omega;\R^{\ell\times d})\) is the \term{approximate gradient} of \(u\), which satisfies for \(\Lcal^d\)-almost every \(x\) that
\[
\lim_{r\to 0} \dashint_{B_{r}(x)} \frac{\abs{u(y)-u(x)-\nabla  u(x)(y-x)}}{\abs{y-x}} \dd y =0.
\] 

\item[(ii)] \(D^j u\) is  the \term{jump part} of \(Du\). It is  concentrated on a countably rectifiable \(\Hcal^{d-1}\) \(\sigma\)-finite set \(J_u\), where it can be represented as 
\[
D^j u= (u^+-u^-)\otimes  \nu_{J_u} \, \Hcal^{d-1}\restrict J_u.
\]
Here, \(\nu_{J_u}\) is normal to \(J_u\) and \(u^\pm\) are the traces of \(u\) on \(J_u\) in positive and negative \(\nu_{J_u}\)-direction, respectively (note that the product \((u^+-u^-)\otimes  \nu_{J_u}\) does not depend on the choice of the orientation), and \((a\otimes b)_{ij}:=a_ib_j\) is the tensor product of the vectors $a$ and $b$.

\item[(iii)] \(D^c u\) is the \term{Cantor part} of \(Du\). It vanishes on every \(\Hcal^{d-1}\) \(\sigma\)-finite set. Furtheremore, for  \(|D^c u|\)-almost all \(x\) there are \(a(x)\in \R^\ell\) and \(b(x)\in \R^d\) such that 
\[
\frac{\di D u}{\di |D u|} (x)=a(x)\otimes b(x).
\]
\end{itemize}
In particular,  \(|Du|\ll \Hcal^{d-1}\), that is, $|Du|$ is absolutely continuous with respect to the $(d-1)$-dimensional Hausdorff  measure $\Hcal^{d-1}$.  Furthermore, if one denotes by \(C_{u}\) the set of \term{approximate  continuity points} of \(u\), i.e., those \(x\)  for which there exists a \(\lambda(x) \in \R^\ell\) such that 
\[
\lim_{r \to 0} \dashint_{B_{r}(x)}\abs{u(y)-\lambda(x)}\dd y=0,
\]
and by \(S_u:=\R^d \setminus C_u\) the set of \term{approximate  discontinuity points}, then 
\begin{equation}\label{e:discontinuity}
\Hcal^{d-1}(S_u\setminus J_u)=0  \qquad\text{and}\qquad |D^s u|(S_u \setminus J_u)=0.
\end{equation}
\end{theorem}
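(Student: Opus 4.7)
The plan is to decompose $Du$ by successive refinement, handling each piece with a different set of tools. By Lebesgue--Radon--Nikod\'ym, write $Du = D^a u + D^s u$ with $D^a u = \nabla u \, \Lcal^d$, $\nabla u \in \Lrm^1_\loc(\Omega; \R^{\ell\times d})$. To establish the approximate differentiability in (i), I would apply the BV Poincar\'e inequality on balls to $v(y) := u(y) - u(x) - \nabla u(x)(y-x)$: at $\Lcal^d$-a.e.\ $x$ that is a Lebesgue point of $\nabla u$ and where $\abs{D^s u}$ has zero $d$-dimensional density, this yields, modulo controlling the mean $v_{B_r(x)}$,
\[
\dashint_{B_r(x)} \frac{\abs{u(y)-u(x)-\nabla u(x)(y-x)}}{r} \dd y = \SmallO(1) \quad \text{as } r \to 0.
\]

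For the singular part, introduce the upper $(d-1)$-density $\Theta^{*,d-1}(\abs{Du},\cdot)$ and set $J_u$ to be the set where it is strictly positive. Standard density estimates show that $J_u$ is $\Hcal^{d-1}$-$\sigma$-finite and that $\abs{Du}$ vanishes on every $\Hcal^{d-1}$-$\sigma$-finite subset of its complement. The core of (ii) is a blow-up analysis at each $x \in J_u$: suitably rescaled copies of $u$ subsequentially converge to a jump profile $u^+\,\ONE_{H^+} + u^-\,\ONE_{H^-}$ on a half-space with unit normal $\nu$, extracting the traces and the normal $\nu_{J_u}(x)$. Rectifiability of $J_u$ then follows from a Federer/Preiss-type tangent-measure criterion for countable $(d-1)$-rectifiability. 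Setting $D^c u := D^s u - D^j u$ gives a measure automatically supported outside $J_u$ and hence vanishing on $\Hcal^{d-1}$-$\sigma$-finite sets.

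The most delicate point, and the main obstacle, is the rank-one structure of the polar of $D^c u$ --- Alberti's theorem. Following the PDE approach of this survey, $Du$ is curl-free as a matrix-valued distribution, and the principal symbol of $\curl$ is injective on matrices outside the rank-one cone $\setn{a \otimes b}{a \in \R^\ell,\ b \in \R^d}$. A blow-up/Fourier argument at $\abs{D^c u}$-a.e.\ point then forces the polar to lie in this cone, for otherwise the ellipticity of $\curl$ in non-rank-one directions would yield absolute continuity of the tangent measure, contradicting the singular nature of the chosen point. The claim $\abs{Du} \ll \Hcal^{d-1}$ is now a formal consequence of the decomposition, and \eqref{e:discontinuity} follows by noting that at any $x \in S_u \setminus J_u$ the blow-ups cannot yield either a constant or a jump profile, hence such points form an $\Hcal^{d-1}$-null set carrying no $\abs{D^s u}$-mass.
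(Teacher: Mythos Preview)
The paper does not prove Theorem~\ref{thm:Bvstruct}; it is quoted as classical background with reference to~\cite{AmbrosioFuscoPallara00book}, and the paper explicitly remarks that the classical arguments ``rely heavily on the connection between functions of bounded variation and sets of finite perimeter'' via the Fleming--Rishel coarea formula. Your sketch deliberately avoids coarea, which is in the spirit of the survey. In particular, your treatment of the rank-one structure in~(iii) via the $\curl$-free constraint and the wave cone is exactly the new route the paper advertises (Theorem~\ref{thm:Afree} applied with $\Acal=\curl$, giving $\Lambda_{\curl}=\{a\otimes b\}$), and is a genuine alternative to Alberti's original argument.

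There is, however, a real gap in your handling of~(ii). You define $J_u$ via positive upper $(d-1)$-density and then assert that blow-ups of $u$ at points of $J_u$ converge to a two-valued jump profile, \emph{before} establishing rectifiability. This step is not available: at a point where you only know $\Theta^{*,d-1}(\abs{Du},x)>0$, tangent measures need not be flat, and there is no mechanism forcing the rescalings of $u$ to split along a hyperplane. Both known routes proceed in the opposite order. Classically, coarea plus De~Giorgi's structure theorem for finite-perimeter sets give the half-space blow-up. In the PDE framework of Section~\ref{ssc:dim}, one first uses the dimensional estimate $\Lambda^{d-1}_{\curl}=\{0\}$ together with the Besicovitch--Federer projection theorem to conclude that $Du\restrict\{\Theta^{*,d-1}(\abs{Du},\cdot)>0\}$ is carried by a countably $(d-1)$-rectifiable set (Theorem~\ref{t:rect}); only \emph{then}, at $\Hcal^{d-1}$-a.e.\ rectifiable point, does the blow-up become a hyperplane and the jump representation follow. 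Your final sentence on~\eqref{e:discontinuity} has the same defect: the claim that points of $S_u\setminus J_u$ are $\Hcal^{d-1}$-null because their blow-ups are ``neither constant nor a jump'' presupposes precisely the Federer--Vol'pert-type structure you are trying to prove, and in the classical proof this again rests on coarea and the fine theory of perimeter.
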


The above structural results are fundamental in the study of many variational problems involving functions of bounded variation. In particular,~(iii) above is known as \emph{Alberti's rank-one theorem},  a key structural result for BV-maps first proved in~\cite{Alberti93} (see also~\cite{MassaccesiVittone19} for a recent, more streamlined proof). It entails strong constraints on  the type of possible singularities for \(Du\), see Corollary~\ref{cor:Alberti_1D} below.

The proofs of all these properties of BV-maps  rely heavily on the connection between functions of bounded variation and \emph{sets of finite perimeter} and on the fine properties of such sets~\cite{AmbrosioFuscoPallara00book,Maggi12book}. This link is expressed by the Fleming--Rishel coarea formula~\cite{FlemingRishel60}: For all \(u \colon \R^d \to \R\) it holds that 
\[
  Du = \int_{-\infty}^{+\infty} D \ONE_{\{ u>t\}} \dd t,\qquad  \abs{Du} = \int_{-\infty}^{+\infty} \abs{D \ONE_{\{ u>t\}}} \dd t,
\]
where both equalities are to be understood in the sense of measures.

Clearly, \(\BV \subset \BD\), but it has been known since the work of  Ornstein~\cite{Ornstein62} that the inclusion is strict (however, see~\cite{Friedrich17,Friedrich18,ContiFocardiIurlano18,ChambolleContiIurlano19} for interesting recent results on partial converses under additional assumptions). More precisely, one can show that for all $N \in \N$ there exists a map $u \in \Wrm^{1,\infty}_0(B_1;\R^d)$ (where $B_1 := B_1(0)$ is the unit ball in $\R^d$) such that
\[
  \inf_{F \in \R^{d \times d}} \int_{B_1} \abs{D u - F} \dd x
  \geq N \int_{B_1} \absBB{\frac{D u + Du^T}{2}} \dd x,
\]
see~\cite{Ornstein62,ContiFaracoMaggi05,KirchheimKristensen16} and also~\cite[Theorem~9.26]{Rindler18book}. This implies that a Korn inequality of the form
\begin{equation}\label{e:korn}
  \| \nabla u\|_{\Lrm^p}^p \lesssim \|u\|_{\Lrm^p}^p + \| \Ecal u\|_{\Lrm^p}^p ,  \qquad u \in \Crm^\infty(B_1;\R^d),  
\end{equation}
fails for \(p=1\) (while it is true for all \(p\in (1,+\infty)\), see~\cite{Temam83book}). Furthermore, no  analogue of the coarea formula is known in $\BD$ and this prevents the application of several techniques used to establish Theorem~\ref{thm:Bvstruct}. Nevertheless, several results  have been obtained and the analogues of  the first two points of Theorem~\ref{thm:Bvstruct} have been known for many years~\cite{AmbrosioCosciaDalMaso97,Kohn79thesis}:

\begin{theorem}\label{thm:BdstructIntro}
For \(u\in \BD_\loc(\R^d)\) we  can decompose \(Eu\) as
\[
Eu=E^a u+E^s u=E^a u+E^ju+E^c u.
\]
Here:
\begin{itemize}
\item[(i)] \(E^a u=\Ecal  u  \, \Lcal^d\)  is the  \emph{absolutely continuous part} of \(Eu\). Its density \(\Ecal u\in \Lrm^1_\loc(\Omega;\Rdds)\) is the \term{approximate  symmetrized gradient} of \(u\), which satisfies for \(\Lcal^d\)-almost every \(x\) that
\[
\lim_{r\to 0} \dashint_{B_{r}(x)} \frac{\abs{(u(y)-u(x))\cdot(y-x)-(\Ecal u(x)(y-x))\cdot (y-x))}}{|y-x|^2}=0.
\] 
\item[(ii)] \(E^j u\) is  the \emph{jump part} of \(Eu\). It is  concentrated on a countably rectifiable \(\Hcal^{d-1}\) \(\sigma\)-finite set \(J_u\), where it can be represented as 
\[
E^j u= (u^+-u^-)\odot  \nu_{J_u} \, \Hcal^{d-1}\restrict J_u.
\]
Here, \(\nu_{J_u}\) is normal to \(J_u\) and \(u^\pm\) are the traces of \(u\) on \(J_u\) in positive and negative \(\nu_{J_u}\)-direction, respectively, and \((a\odot b)_{ij}:=\frac{1}{2}(a \otimes b + b \otimes a)\) is the symmetric tensor product of the vectors $a,b$.

\item[(iii)] \(E^c u\) is the \emph{Cantor part} of \(Eu\). It vanishes on every \(\Hcal^{d-1}\) \(\sigma\)-finite set.
\end{itemize}
In particular, \(|E u|\ll \Hcal^{d-1}\).
\end{theorem}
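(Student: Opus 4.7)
The plan is to view $Eu$ as a $\Rdds$-valued Radon measure annihilated by the constant-coefficient second-order Saint--Venant operator $\Acal$, and to deduce all three structural statements from this PDE constraint together with a suitable Korn--Sobolev inequality. The Lebesgue--Radon--Nikod\'ym decomposition $Eu = E^a u + E^s u$ is automatic; once the rectifiable set $J_u$ of jump points is identified and $\abs{Eu} \ll \Hcal^{d-1}$ is known, the further splitting $E^s u = E^j u + E^c u$ into an $\Hcal^{d-1}$ $\sigma$-finite part and the diffuse remainder is tautological. The mathematical content therefore reduces to the three bullet points.

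For (i) I would perform a blow-up at a Lebesgue point $x$ of the density $\Ecal u$ of $E^a u$ satisfying $\abs{E^s u}(B_r(x))/r^d \to 0$. Rescaling by $v_r(y) := r^{-1}(u(x+ry)-u(x))$, one has $E v_r \toweakstar \Ecal u(x)\,\Lcal^d$ on $B_1$, and the BD Korn--Sobolev inequality
\[
\inf_{R \text{ rigid}} \norm{v_r - R}_{\Lrm^{d/(d-1)}(B_1)} \lesssim \abs{E v_r}(B_1)
\]
provides $\Lrm^1$-compactness after subtracting affine rigid motions $R_r(y) = s_r + W_r y$ with $W_r$ skew. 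Testing the resulting limit relation against the scalar $(y-x)$ kills the contribution $W_r y \cdot y$, which is exactly why only the symmetric part of the approximate gradient appears in the statement. Part (ii) follows from the classical BD trace theory of Temam--Strang: on any Lipschitz hypersurface $S$, $u$ admits one-sided approximate traces $u^\pm$, and the normal component of $Eu\restrict S$ equals $(u^+-u^-)\odot \nu_S\,\Hcal^{d-1}\restrict S$; rectifiability of $J_u$ is then obtained by covering the two-sided-jump set with small Korn--Poincar\'e balls in the spirit of the BV case.

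The main obstacle, and the place where the PDE viewpoint is decisive, is (iii), i.e.\ the bound $\abs{Eu} \ll \Hcal^{d-1}$. The key input is that the polar $\di Eu/\di\abs{Eu}$ of the $\Acal$-free measure $Eu$ must lie, at $\abs{E^s u}$-a.e.\ point, in the wave cone
\[
\Lambda_\Acal = \setn{a\odot b}{a,b \in \R^d},
\]
so that only symmetric rank-one singular ``strains'' are compatible with the Saint--Venant constraint. An abstract dimension-estimate lemma for $\Acal$-free measures, proved by a Fourier-multiplier argument exploiting ellipticity of the principal symbol of $\Acal$ on directions transverse to $\Lambda_\Acal$, then forces $\abs{E^s u} \ll \Hcal^{d-1}$. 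Once this is in hand, the vanishing of $E^c u$ on every $\Hcal^{d-1}$ $\sigma$-finite set is immediate from its definition as the complement in $E^s u$ of the $\sigma$-finite jump part. Technically the hardest ingredient is the dimension lemma; everything else is a routine blow-up for (i) or classical trace theory for (ii).
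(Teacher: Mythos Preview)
The paper does not give its own proof of this theorem: it is quoted from Kohn's thesis and Ambrosio--Coscia--Dal~Maso, whose arguments proceed by one-dimensional \emph{slicing} (restricting $u$ to lines and reducing to $\BV$ in one variable), not by the PDE/blow-up route you sketch. The only place the paper revisits the statement is Section~\ref{ssc:dim}, where it observes that the dimensional estimates of Arroyo-Rabasa--De~Philippis--Hirsch--Rindler recover both $\abs{Eu}\ll\Hcal^{d-1}$ and the rectifiability of the $(d-1)$-dimensional part directly from the $\Curl\Curl$-constraint. Your strategy for~(iii) is in the same spirit as this alternative, not as the original ACD proof.

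That said, your description of~(iii) contains a real gap. Knowing that the polar lies in the wave cone $\Lambda_\Acal=\{a\odot b\}$ (Theorem~\ref{thm:Afree}) does \emph{not} by itself force $\abs{E^s u}\ll\Hcal^{d-1}$: a priori nothing prevents a measure with rank-one symmetric polar from charging sets of Hausdorff dimension below $d-1$. What is actually needed is the \emph{refined} cone $\Lambda^{d-1}_{\Curl\Curl}=\{0\}$ of Theorem~\ref{t:dimens}, namely that for every nonzero $M\in\Rdds$ one can find a $(d-1)$-plane $\pi$ such that $\Abb(\xi)M\neq 0$ for \emph{all} $\xi\in\pi\setminus\{0\}$. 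This is strictly stronger than ``polar in $\Lambda_\Acal$'' and is what drives Corollary~\ref{cor:aac}. Your phrase ``ellipticity of the principal symbol on directions transverse to $\Lambda_\Acal$'' gestures toward this but does not capture it; you should state and verify $\Lambda^{d-1}_{\Curl\Curl}=\{0\}$ explicitly.

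Your treatment of~(ii) is also too loose. ``Covering the two-sided jump set with Korn--Poincar\'e balls in the spirit of the $\BV$ case'' is not a proof: the $\BV$ rectifiability argument rests on the coarea formula, which is unavailable in $\BD$ (this is precisely why ACD resort to slicing). If you follow the PDE line, note that once $\Lambda^{d-1}_{\Curl\Curl}=\{0\}$ is established, Theorem~\ref{t:rect} (via the Besicovitch--Federer criterion) already yields rectifiability of the set $\{\theta^*_{d-1}(\abs{Eu})>0\}$, so you do not need a separate covering argument at all; the jump formula then follows from the trace theorem as you say. Your approach to~(i) is fine in outline and close to the standard one.
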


Concerning the trace, we remark that there exist two bounded linear trace operators onto $\Hcal^{d-1}$-rectifiable sets, giving the one-sided traces $u^\pm$, see Theorem~II.2.1 of~\cite{TemamStrang80} and also~\cite{Babadjian15,BreitDieningGmeineder17?}.

Despite the clear similarity between Theorem~\ref{thm:Bvstruct} and Theorem~\ref{thm:BdstructIntro}, some parts are missing. The analogue of the first statement in~\eqref{e:discontinuity} is currently unknown and only partial results are available. This is one of the major open problems in the theory of BD-maps:

\begin{conjecture}
For all \(u\in \BD_\loc(\R^d)\) it holds that $\Hcal^{d-1}(S_u\setminus J_u)=0$.
\end{conjecture}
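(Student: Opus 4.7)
My plan is purely PDE-theoretic, in the spirit of the rest of this survey; since the conjecture is stated as open, the argument I sketch is meant to reach the natural obstacle rather than to bypass it. By Theorem~\ref{thm:BdstructIntro} we already have $\abs{Eu}\ll\Hcal^{d-1}$, so the set where the upper $(d-1)$-density of $\abs{Eu}$ is $+\infty$ is $\Hcal^{d-1}$-null. Hence the plan is to reduce to two claims: (a) up to an $\Hcal^{d-1}$-null set, $S_u$ is contained in the set of points of positive $(d-1)$-density of $\abs{Eu}$; and (b) $\Hcal^{d-1}$-a.e.\ point of finite and positive $(d-1)$-density of $\abs{Eu}$ belongs to $J_u$. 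Claim~(a) should follow from a $\BD$ Poincaré--Korn-type inequality of the form
\[
\infmod_{R\ \text{rigid}}\, \dashint_{B_r(x)}\abs{u-R}\dd y \,\lesssim\, \frac{\abs{Eu}(B_r(x))}{r^{d-1}},
\]
together with the observation that for $x\in S_u$ no constant choice of $R$ can make the left-hand side vanish. Already here a delicate point appears: Poincaré in $\BD$ subtracts a full rigid motion rather than a constant, so one must rule out that the failure of approximate continuity at $x$ is absorbed by infinitesimal rotations, which carry no $Eu$-mass.

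On the reduced set $\setn{x}{0<\theta^{*,d-1}(\abs{Eu},x)<+\infty}$, the measure $\abs{Eu}$ is $\Hcal^{d-1}$-$\sigma$-finite, and Preiss-type tangent-measure analysis in the PDE-constrained form developed by the authors yields, for $\Hcal^{d-1}$-a.e.\ such $x$, only \emph{flat} tangent measures $\theta(x)\,\Hcal^{d-1}\restrict H_x$ concentrated on a hyperplane $H_x$ with normal $\nu_x$. The Saint-Venant constraint $\curl\curl Eu=0$ combined with the rank-one theorem for symmetrized gradients then pins down the polar vector in the form $\frac{\di Eu}{\di\abs{Eu}}(x)=a(x)\odot \nu_x$, so that the blow-up of $Eu$ at $x$ coincides, up to sign and multiplicative constants, with the blow-up produced by a genuine $\BD$ jump of magnitude $a(x)$ across $H_x$.

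The main obstacle, and the reason I would not expect the argument to close with current technology, is the last step: promoting this flat, jump-like blow-up of the \emph{measure} $Eu$ to the existence of two one-sided approximate traces of the \emph{function} $u$ at $x$, which would place $x\in J_u$. In $\BV$ the Fleming--Rishel coarea formula closes this gap in a single line via slicing through level sets; in $\BD$ no analogous coarea is available, and Korn's inequality~\eqref{e:korn} fails at $p=1$, so the reconstruction of $u$ from the blow-ups of $Eu$ is defined only up to a scale-dependent rigid motion whose rotational part must be controlled quantitatively across dyadic scales. A resolution along these lines seems to require either a $\BD$-adapted substitute for coarea, or a sharpening of the dimensional/rigidity theorems for $\curl\curl$-free symmetric-matrix measures beyond what is currently known.
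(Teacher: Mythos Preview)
The statement is presented in the paper as an \emph{open conjecture}; the paper gives no proof and explicitly labels it ``one of the major open problems in the theory of BD-maps''. There is therefore no proof in the paper to compare against. You recognize this yourself and frame your write-up accordingly, as an outline of where a natural PDE-based approach runs aground rather than as a claimed proof.

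Your identification of the obstacles is accurate and consistent with the paper's surrounding discussion: the absence of a coarea formula in $\BD$ and the failure of Korn's inequality at $p=1$ are exactly what prevents importing the $\BV$ argument, and the problem of controlling the scale-dependent rigid motion in the blow-up is the crux. One remark worth making: the difficulty you flag in claim~(a) is not merely a ``delicate point'' on the way to the main obstacle in~(b) --- it is already essentially the full problem. Showing that, up to an $\Hcal^{d-1}$-null set, every approximate discontinuity point of $u$ carries positive $(d{-}1)$-density of $\abs{Eu}$ already requires ruling out that the rotational part of the optimal rigid motion in the $\BD$-Poincar\'e inequality drifts across scales without producing $\abs{Eu}$-mass; this is the same mechanism you isolate in~(b). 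The paper only records the weaker known fact (due to Ambrosio--Coscia--Dal~Maso) that $\abs{Ev}(S_u\setminus J_u)=0$ for every $v\in\BD_\loc(\R^d)$, which gives no control on $\Hcal^{d-1}(S_u\setminus J_u)$.
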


We remark that the following weaker statement was proved in~\cite{AmbrosioCosciaDalMaso97}: If $u \in \BD_\loc(\R^d)$, then $|Ev|(S_u \setminus   J_u)=0$ for all $v \in \BD_\loc(\R^d)$; in particular, $|E^s u|(S_u \setminus J_u)=0$.

On the other hand, the analogue of Alberti's rank-one theorem has recently been established by the authors~\cite{DePhilippisRindler16}:

\begin{theorem}\label{thm:AlbertiBD} Let \(u\in \BD_\loc(\R^d)\). Then, for \(|E^s u|\)-almost every \(x\) there are vectors \(a(x), b(x) \in \R^d\) such that 
\[
\frac{\di E u}{\di |Eu|} (x)=a(x)\odot b(x). 
\]
\end{theorem}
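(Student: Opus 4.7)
The strategy is to realize \(Eu\) as an \(\Acal\)-free matrix-valued measure for a suitable linear constant-coefficient differential operator \(\Acal\) and then to invoke the abstract rigidity theorem for \(\Acal\)-free measures that is the centerpiece of the PDE approach surveyed here. First, for any \(u \in \BD_\loc(\R^d)\) the symmetrized derivative \(M := Eu \in \Mcal_\loc(\R^d;\Rdds)\) satisfies the \term{Saint-Venant compatibility conditions}
\[
  (\Acal M)_{ijkl} := \partial_j \partial_l M_{ik} + \partial_i \partial_k M_{jl} - \partial_i \partial_l M_{jk} - \partial_j \partial_k M_{il} = 0
\]
distributionally. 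This is immediate after plugging \(M_{ij} = \tfrac{1}{2}(\partial_i u_j + \partial_j u_i)\) into the definition of \(\Acal\) and noting that the eight resulting third-order terms cancel pairwise by the symmetry of mixed partials; the identity then persists for the measure \(Eu\).

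Second, I would identify the wave cone \(\Lambda_\Acal := \bigcup_{\xi \in \R^d \setminus \{0\}} \ker \hat\Acal(\xi) \subset \Rdds\) of \(\Acal\) as the set of symmetric rank-one tensors. One inclusion is a direct check: for \(M = a \odot \xi\) the four terms \(\xi_j \xi_l M_{ik} + \xi_i \xi_k M_{jl} - \xi_i \xi_l M_{jk} - \xi_j \xi_k M_{il}\) in the principal symbol cancel in pairs. For the converse, by rotational covariance one may take \(\xi = e_1\); the component of the equation \(\hat\Acal(e_1) M = 0\) corresponding to \(j = l = 1\) then reduces to
\[
  M_{ik} = \delta_{i1} M_{1k} + \delta_{k1} M_{i1} - \delta_{i1}\delta_{k1} M_{11},
\]
which forces \(M_{ik} = 0\) whenever both \(i \neq 1\) and \(k \neq 1\), whence one reads off \(M = a \odot e_1\) with \(a_1 = M_{11}\) and \(a_j = 2 M_{1j}\) for \(j \neq 1\). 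Consequently \(\Lambda_\Acal = \{\,a \odot b : a,b \in \R^d\,\}\).

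Third, I would apply the abstract rigidity principle underlying the whole PDE-for-measures framework: if \(\mu\) is a vector-valued Radon measure on \(\R^d\) with \(\Acal \mu = 0\) in the distributional sense, for a homogeneous constant-coefficient linear operator \(\Acal\), then
\[
  \frac{\di \mu}{\di |\mu|}(x) \in \Lambda_\Acal \qquad \text{for } |\mu^s|\text{-a.e. } x \in \R^d.
\]
Specialising this to \(\mu = Eu\) and combining with the second step gives at once that \(\tfrac{\di Eu}{\di |Eu|}(x)\) is a symmetric rank-one tensor \(a(x) \odot b(x)\) at \(|E^s u|\)-a.e. \(x\), which is the claim.

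The two elementary steps above are essentially bookkeeping; the entire weight of the argument rests on the third step, the abstract rigidity theorem, which is by far the main obstacle. Its proof is not a soft consequence of the definitions but relies on a blow-up analysis at singular points combined with Fourier-restriction-type harmonic analysis developed by the authors, exploiting the ellipticity of \(\Acal\) off the characteristic set. Once this ingredient is granted, the rank-one structure of \(E^s u\) in \(\BD\) and the classical Alberti rank-one structure of \(D^s u\) in \(\BV\) become parallel statements, the only difference being that for the \(\BV\) operator \(\Acal = \curl\) the wave cone is the set of all rank-one matrices \(a \otimes b\), whereas for the \(\BD\) operator above it is automatically restricted to symmetric rank-one tensors.
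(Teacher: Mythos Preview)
Your proposal is correct and follows exactly the paper's route: realize $Eu$ as a $\Curl\Curl$-free measure via the Saint-Venant compatibility conditions, compute that the associated wave cone equals $\{a \odot b : a,b \in \R^d\}$, and then invoke the abstract structure theorem for $\Acal$-free measures (Theorem~\ref{thm:Afree}), which is indeed where all the analytic work lies. The only cosmetic difference is that you write the Saint-Venant operator in its uncontracted four-index form rather than the paper's contracted two-index version, but both constraints have the same kernel at each frequency and hence the same wave cone, so the argument is identical.
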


Note that for the jump part the above theorem is already contained in Theorem~\ref{thm:BdstructIntro}~(ii) and the real difficulty lies in dealing with the Cantor part of \(Eu\). Like Alberti's rank one theorem, Theorem~\ref{thm:AlbertiBD} allows to deduce some quite precise information on the structure of the singularities of \(Eu\), see Section~\ref{s:singloc} below. The picture is however still  less complete than in the BV-case, see Conjecture~\ref{conj:strcuture}.

\subsection*{A new approach to study singularities}

The failure of a coarea-type formula makes the approach used in~\cite{AmbrosioCosciaDalMaso97} unsuitable for the proof of Theorem~\ref{thm:AlbertiBD}. The strategy followed in~\cite{DePhilippisRindler16} is instead based on a new point of view combining Harmonic Analysis techniques with some tools from Geometric Measure Theory. This approach is heavily inspired by the ideas of Murat and Tartar in the study of compensated compactness~\cite{Tartar79,Murat78,Murat79,Tartar83} and has been introduced in this context for the first time in the PhD thesis of the second author~\cite{Rindler11thesis}. 

The core idea is  to ``forget'' about the map $u$ itself and to work with $Eu$ only. This  is enabled by the fact that symmetrized derivatives are not arbitrary measures (with values in $\Rdds$), but that they satisfy a PDE constraint, namely the \term{Saint-Venant compatibility conditions}: If the measure $\mu=(\mu_{jk})$ is the symmetrized derivative of some $u\in \BD(\Omega)$, i.e., $\mu=Eu $, then, by direct computation,
\[
  \sum_{i=1}^d \partial_{ik} \mu_{ij}+\partial_{ij} \mu_{ik}-\partial_{jk} \mu_{ii}-\partial_{ii} \mu_{jk} = 0  \qquad
  \text{for all $j,k=1,\ldots,d$.}
\]
For $d=3$ this constraint can be written as the vanishing of a  double application of the matrix-curl, defined as the matrix-valued differential operator 
\begin{equation*}\label{e:Curl}
(\Curl A)_{ij} := \sum_{k,l=1}^3 \eps_{ilk} \partial_l A_{jk},\qquad i,j\in\{1,2,3\},
\end{equation*}
 where $\eps_{ilk}$ denotes the parity of the permutation $\{1,2,3\}\to\{i,l,k\}$. Hence, we will write the above equations (for all dimensions) in shortened form as
\[
  \Curl \Curl \mu = 0
\]
and say that $\mu$ is ``$\Curl\Curl$-free''. This PDE-constraint furthermore  contains \emph{all} the information about symmetrized derivatives, as $\Curl\Curl$-freeness is both necessary and sufficient for a measure to be a BD-derivative locally, this is (a modern version of) the \emph{Saint-Venant theorem}, see for instance~\cite{AmroucheCiarletGratieKesavan06}.

Once this point of view is adopted, it is then natural to try  to understand the structure of the singular part of PDE-constrained measures. More precisely, given a linear homogeneous operator
\[
 \Acal := \sum_{\abs{\alpha} = k} A_\alpha \partial^\alpha,
\]
where \(A_\alpha \in \R^{m\times n}\),  \(\alpha=(\alpha_1,\ldots, \alpha_d) \in (\N \cup \{0\})^d\) is a multi-index, and \(\partial^\alpha:=\partial^{\alpha_1}_1\cdots\partial^{\alpha_d}_d\), we say that  an \(\R^n\)-valued (local) Radon measure \(\mu\in \Mcal_\loc(\Omega;\R^n)\) is \(\Acal\)-free if it satisfies 
\[
\Acal \mu=0  \qquad\text{in the sense of distributions.}
\] 
Note that since \(A_\alpha \in \R^{m\times n}\) this is actually a system of equations. A natural question is then to investigate the restrictions imposed on the singular part \(\mu^s\) of \(\mu\) by the differential constraint.
 
 To answer to this question, we first note that there are two trivial  instances: If \(\Acal = 0\), then no constraint is imposed. Conversely, if \(\Acal\) is \emph{elliptic}, i.e., if its symbol
 \begin{equation}\label{e:symbol}
 \Abb(\xi) := (2\pi\ii)^k \sum_{\abs{\alpha}=k} A_\alpha\xi^{\alpha}\in \R^{m\times n}
 \end{equation}
 is injective, then by the generalized Weyl lemma, \(\mu\) is smooth and thus no singular part is possible.
 
 In view of the above considerations it is natural to conjecture that the presence of singularities is related to the failure of ellipticity. This failure is measured by the \emph{wave cone} associated to \(\Acal\), first introduced by Murat and Tartar in the context of compensated compactness~\cite{Tartar79,Murat78,Murat79,Tartar83}:
\[
  \Lambda_\Acal := \bigcup_{\xi \in \Sbb^{d-1}} \ker \Abb(\xi).
\]
The main result of~\cite{DePhilippisRindler16} asserts that this cone is precisely what constrains the singular part of \(\mu\), see  also~\cite{DePhilippisMarcheseRindler17} and the surveys~\cite{DePhilippis18, DePhilippisRindler18} for  other applications of these results.

\begin{theorem}\label{thm:Afree}
Let \(\mu\in \Mcal(\Omega;\R^n)\)  be an \(\Acal\)-free measure, i.e.,
\[
\Acal \mu=0.
\]
Then, for \(|\mu^s|\)-almost all \(x\),
\[
\frac{\di \mu}{\di |\mu|}(x) \in   \Lambda_\Acal. 
\]
\end{theorem}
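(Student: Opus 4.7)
The natural strategy for Theorem~\ref{thm:Afree} is a blow-up plus rigidity argument by contradiction. Suppose the conclusion fails, so that on some Borel set $E$ of positive $|\mu^s|$-measure we have $P(x) := \frac{\di \mu}{\di |\mu|}(x) \notin \Lambda_\Acal$. I would fix a point $x_0 \in E$ which is simultaneously a Besicovitch differentiation point for $\mu$ and a point of infinite upper $d$-density for $|\mu|$; both conditions hold at $|\mu^s|$-a.e.\ $x_0 \in E$. Write $P_0 := P(x_0)$. The plan is to produce a blow-up limit $\nu$ of $\mu$ at $x_0$ which is $\Acal$-free, has constant polar $P_0$, but whose total variation cannot be a smooth positive density against $\Lcal^d$, and then to derive a contradiction by showing that such $\nu$ must, via ellipticity, be smooth.

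For the blow-up step, set $\mu_r := c_r \, (T^{x_0,r})_\sharp \mu$ with $T^{x_0,r}(y) = (y-x_0)/r$ and $c_r := 1/|\mu|(B_r(x_0))$. Weak* compactness in $\Mcal(\cl{B_1};\R^n)$ gives a non-zero tangent measure $\nu$ as a weak* limit along some $r_n \downarrow 0$. Since $\Acal$ is homogeneous of order $k$,
\[
  \Acal \mu_r = c_r r^{-k} (T^{x_0,r})_\sharp (\Acal \mu) = 0,
\]
so $\Acal \nu = 0$ in $\Dcal'(B_1;\R^m)$ by passing to the limit. Standard tangent-measure theory à la Preiss, together with the Besicovitch choice of $x_0$, guarantees the constant-polar representation $\nu = P_0 \, \sigma$ for a non-trivial positive Radon measure $\sigma$, and the infinite-density selection prevents $\sigma$ from coinciding with a smooth positive multiple of $\Lcal^d$ near $0$.

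The rigidity step is the heart of the argument. Since $P_0 \in \R^n$ is fixed, $\Acal \nu = 0$ becomes the scalar-coefficient system
\[
  \sum_{\abs{\alpha}=k} (A_\alpha P_0) \, \partial^\alpha \sigma = 0 \qquad \text{in } \Dcal'(B_1;\R^m)
\]
for the \emph{scalar} distribution $\sigma$. Its vector symbol $\xi \mapsto \Abb(\xi) P_0$ is, thanks to $P_0 \notin \Lambda_\Acal$, non-zero for every $\xi \in \Sbb^{d-1}$; by homogeneity this gives $\absb{\Abb(\xi) P_0} \geq c \abs{\xi}^k$ on $\R^d$. The scalar differential operator $\phi \mapsto \Acal(P_0 \phi)$ is thus elliptic in the classical sense, so the generalized Weyl lemma forces $\sigma \in \Crm^\infty(B_1)$ and in particular $\sigma \ll \Lcal^d$, contradicting the second paragraph. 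I expect the main obstacle to be precisely this ellipticity–rigidity step: one has to exploit the very special structure of the blow-up (constant polar, scalar unknown) to reduce the \emph{vector} PDE constraint $\Acal\nu=0$ to a genuinely elliptic \emph{scalar} system, and then invoke a regularity statement for Radon-measure solutions rather than smooth ones. Producing the blow-up with constant polar and with the right incompatibility against smooth multiples of $\Lcal^d$, though classical, is the other delicate ingredient that glues the two steps together.
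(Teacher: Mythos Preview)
Your overall blow-up/ellipticity scheme is the right one and matches the paper's strategy, but there is a genuine gap in the concluding step. You claim that choosing $x_0$ with infinite upper $d$-density for $|\mu|$ ``prevents $\sigma$ from coinciding with a smooth positive multiple of $\Lcal^d$ near $0$''. This is false: there exist purely singular positive measures all of whose non-trivial tangent measures are constant multiples of Lebesgue measure (Preiss~\cite[Example~5.9~(1)]{Preiss87}). Infinite density at $x_0$ says nothing about the regularity of the \emph{renormalized} limits $c_{r_n}(T^{x_0,r_n})_\sharp |\mu|$, because the normalizing constants $c_{r_n}$ absorb the blow-up of mass. So knowing only that $\sigma$ is smooth (which your Weyl-lemma argument does yield) gives no contradiction.

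The paper closes exactly this gap, and this is the real content of the theorem. One does not merely pass to the limit; one exploits ellipticity \emph{along the sequence}. Writing $\sigma_k := |\mu|(B_{r_k}(x_0))^{-1} (T^{x_0,r_k})_\sharp |\mu^s|$, the constant-polar approximation gives $\Acal(P_0\sigma_k) = \Acal(P_0\sigma_k - \mu_k)$ with $|P_0\sigma_k - \mu_k|(B_1)\to 0$. After a cut-off and inversion of an elliptic piece of the symbol, $\varphi\sigma_k$ splits as a Calder\'on--Zygmund operator acting on a vanishing term plus Riesz-type potentials of uniformly bounded measures. The latter are $\Lrm^1$-precompact; for the former one has only a weak-$(1,1)$ bound, but combined with the \emph{positivity} of $\sigma_k$ this still forces $\varphi\sigma_k \to \varphi\sigma$ \emph{strongly} in total variation. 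Now the contradiction is immediate: each $\sigma_k$ is carried by a Lebesgue-null set $G_k$, so $|\sigma|(B_{1/2}) = |\sigma - \sigma_k|(B_{1/2}\setminus G_k) \to 0$, against $\sigma(B_{1/2})>0$. In short, your ellipticity step is correct but must be applied to obtain strong compactness of the blow-up \emph{sequence}, not just smoothness of the limit.
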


In the case  $\Acal = \Curl\Curl$ we obtain by direct computation, see~\cite[Example 3.10(e)]{FonsecaMuller99}, that for $M\in \Rdds$, $\xi \in \R^d$,
\[
-(4\pi)^{-2}\Abb(\xi)M=(M\xi)\otimes \xi+\xi\otimes (M\xi)-({\rm tr} M) \, \xi\otimes \xi -|\xi|^2 M,
\]
which gives
\[
 \ker \Abb(\xi) = \setb{a\otimes \xi+\xi \otimes a}{a\in \R^d}.
\]
Thus,
\begin{equation} \label{eq:Lambda}
 \Lambda_{\Curl\Curl} = \setb{a\odot b}{a,b\in \R^d}.
\end{equation}
Hence, Theorem~\ref{thm:Afree} implies Theorem~\ref{thm:AlbertiBD}. We remark that in the two-dimensional case $\mu \in \Mcal_\loc(\R^2;\R^{2 \times 2})$ we moreover have
\begin{equation} \label{eq:curlcurl2D}
\begin{gathered}
\Curl \Curl \mu = 0
\\
\Updownarrow
\\
 \curl \curl \mu = \partial_{22} \mu_{22} - \partial_{12} \mu_{12} - \partial_{12} \mu_{21} + \partial_{11} \mu_{22} = 0,
\end{gathered}
\end{equation}
where $\curl \, (\nu_1,\nu_2) := \partial_2 \nu_1 - \partial_1 \nu_2$ is the classical (scalar-valued) curl in two dimensions, applied row-wise.

Note also that if \(\Acal \colon \R^{\ell\times d} \to \R^{\ell\times d \times d}\) is the \(d\)-dimensional row-wise \(\curl\)-operator defined via
\[
(\curl A)_{ijk}:=\partial_j A_{ik}-\partial_k A_{ij},\qquad i=1,\dots, \ell, \quad j,k=1,\dots, d,
\]
one easily computes that 
\[
 \Lambda_{\curl} = \setb{a\otimes b }{a,b\in \R^d}.
\]
Hence, Theorem~\ref{thm:Afree} also provides a new proof of Alberti's rank one theorem.

Furthermore, we mention that in~\cite{ArroyoRabasaDePhilippisHirschRindler19} similar (more refined) techniques were used to recover the dimensional estimates and rectifiability results on the jump parts of BV- and BD-maps; we will discuss these results in Section~\ref{ssc:dim}.

\subsection*{Outline of the paper}
In Section~\ref{sec:rigidity} we start by showing some rigidity statements for maps whose symmetrized gradient is constrained to lie in a certain set. Parts of these result will be used later, but most importantly, we believe that they will give the reader a feel for how the differential constraint characterizing \(Eu\) can be used to understand BD-maps. In Section~\ref{sec:singularities} we give a sketch of the proof of Theorem~\ref{thm:AlbertiBD} and we outline how the improvements in~\cite{ArroyoRabasaDePhilippisHirschRindler19} give (optimal) dimensionality and rectifiability estimates. We also investigate the implications of Theorem~\ref{thm:AlbertiBD} on the structure of singularities of $\BD$-maps. Finally, in Section~\ref{sec:app}  we present, mostly without proofs,  some applications of the above results to the study of weak* lower semicontinuity of integral functionals, relaxation, and the characterization of Young measures generated by sequences of symmetrized gradients.

\subsection*{Acknowledgements}
This project has received funding from the European Research Council (ERC) under the European Union's Horizon 2020 research and innovation programme, grant agreement No 757254 (SINGULARITY).

\section{Rigidity}\label{sec:rigidity}

Before we come to more involved properties of general $\BD$-maps, we first investigate what can be inferred by using only elementary rigidity arguments. Besides being useful in the next section, these arguments are also instructive since they show the interplay between the \(\Curl\Curl\)-free condition and some pointwise properties, which is the main theme of this survey. Furthermore, the rigidity theorem, Theorem~\ref{thm:general rigidity}, will be used to study tangent measures later. Much of the discussion follows~\cite[Section 4.4]{Rindler11}. To make some proofs more transparent we start presenting the results in the two-dimensional case, where, however, all interesting effects are already present. At the end of the section we deal with the general case.

A \term{rigid deformation} is a skew-symmetric affine map $\omega \colon \R^d \to \R^d$, i.e., $u$ is of the form
\[
  \omega (x) = u_0 + \Xi x,  \qquad \text{where $u_0 \in \R^d$, $\Xi \in \R_\skw^{d \times d}$.}
\]
The following lemma is well-known and will be used many times in the sequel, usually without mentioning. We reproduce its proof here because the central formula~\eqref{eq:Wu_identity} will be of use later.

\begin{lemma} \label{lem:E_kernel}
The kernel of the linear operator $E \colon \BD_\loc(\R^d) \to \Mcal_\loc(\R^d;\R_\sym^{d \times d})$ given by
\[
Eu:=\frac{1}{2} \bigl(Du+Du^T  \bigr)
\]
  is the space of rigid deformations. 
  \end{lemma}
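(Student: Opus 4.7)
The plan is to prove both containments separately. The containment ``rigid deformations $\subseteq \ker E$'' is immediate: for $\omega(x) = u_0 + \Xi x$ with $\Xi \in \R^{d \times d}_\skw$, one has $\nabla \omega \equiv \Xi$, hence $E\omega = \tfrac{1}{2}(\Xi + \Xi^T) = 0$.

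For the reverse containment, the main tool I would use is the pointwise (distributional) identity
\begin{equation}\label{eq:Wu_identity_plan}
  \partial_j \partial_k u_i = \partial_k (Eu)_{ij} + \partial_j (Eu)_{ik} - \partial_i (Eu)_{jk}, \qquad i,j,k \in \{1,\dots,d\},
\end{equation}
which is presumably the identity highlighted in the statement as the ``central formula'' to be used later. The derivation is a short direct calculation: substitute $2(Eu)_{ij} = \partial_i u_j + \partial_j u_i$ on the right-hand side, expand, and cancel pairs of terms using commutativity of partial derivatives on distributions. Since $u \in \Lrm^1_\loc(\R^d;\R^d)$ and $Eu \in \Mcal_\loc(\R^d;\R^{d \times d}_\sym)$, every object appearing in~\eqref{eq:Wu_identity_plan} is a well-defined distribution on $\R^d$, so the identity is valid for all $u \in \BD_\loc(\R^d)$.

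With~\eqref{eq:Wu_identity_plan} in hand, the hypothesis $Eu = 0$ immediately yields $\partial_j \partial_k u_i = 0$ in $\Dcal'(\R^d)$ for every $i,j,k$. By the classical distributional fact that a distribution all of whose second-order partial derivatives vanish is a polynomial of degree at most one, $u$ must agree $\Lcal^d$-a.e.\ with an affine map $x \mapsto u_0 + Ax$ for some $u_0 \in \R^d$ and $A \in \R^{d \times d}$. The condition $Eu = 0$ then rewrites as $\tfrac{1}{2}(A + A^T) = 0$, so $A \in \R^{d \times d}_\skw$ and $u$ is a rigid deformation. I do not anticipate a genuine obstacle: the whole argument is driven by the algebraic identity~\eqref{eq:Wu_identity_plan}, which reduces the BD-kernel problem to the elementary distributional characterization of affine functions.
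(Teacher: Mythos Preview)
Your argument is correct and follows essentially the same route as the paper. The only cosmetic difference is that the paper phrases the key identity in terms of the skew part $Wu := \tfrac{1}{2}(Du - Du^T)$, namely
\[
  \partial_k (Wu)_{ij} = \partial_j (Eu)_{ik} - \partial_i (Eu)_{kj},
\]
and concludes that $Wu$ (hence $Du = Eu + Wu$) is constant, whereas you express second derivatives of $u$ directly in terms of $Eu$; the two identities are interchangeable via $\partial_k (Du)_{ij} = \partial_k (Eu)_{ij} + \partial_k (Wu)_{ij}$. The ``central formula'' the paper singles out for later use is the $Wu$ version above, since subsequent rigidity arguments (for $Eu = P\nu$ with $P$ fixed) proceed by reading off $\nabla Wu$ from $Eu$.
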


\begin{proof}
It is obvious that $E u$ vanishes for a rigid deformation $u$. For the other direction, let $u \in \BD_\loc(\R^d)$ with $Eu = 0$. Define
\[
Wu := \frac{1}{2} \bigl( Du - Du^T \bigr).
\]
Then, for all $i,j,k = 1,\ldots,d$, we have, in the sense of distributions, 
\begin{align}
  \partial_k (Wu)_{ij} &= \frac{1}{2} \bigl( \partial_{kj} u_i - \partial_{ki} u_j \bigr)  \notag\\
  &= \frac{1}{2} \bigl( \partial_{jk} u_i + \partial_{ji} u_k \bigr)
    - \frac{1}{2} \bigl( \partial_{ij} u_k + \partial_{ik} u_j \bigr)  \notag\\
  &= \partial_j (Eu)_{ik} - \partial_i (Eu)_{kj} \label{eq:Wu_identity}\\
  &= 0. \notag
\end{align}
As $Du = Eu + Wu$, this entails that $Du$ is a constant, hence $u$ is affine, and it is clear that it in fact must be a rigid deformation.
\end{proof}

It is an easy consequence of the previous lemma that any $u \in \BD_\loc(\R^d)$ with $Eu = S \Lcal^d$, where $S \in \R_\sym^{d \times d}$ is a fixed symmetric matrix, is an affine function. More precisely, $u(x) = u_0 + (S+\Xi)x$ for some $u_0 \in \R^d$ and $\Xi \in \R_\skw^{d \times d}$.

Next, we will consider what can be said about maps $u \in \BD_\loc(\R^d)$ for which
\begin{equation} \label{eq:incl}
  Eu = P \nu
\end{equation}
with a fixed matrix $P \in \Rdds$  and a measure \(\nu \in \Mcal_\loc(\R^d;\R)\). As we already saw in~\eqref{eq:Lambda}, a special role is played by the symmetric rank-one matrices, $a \odot b$ for $a,b \in \R^d$. We recall that those matrices can be characterized in terms of their eigenvalues:

\begin{lemma} \label{lem:sym_tensor_prod}
Let $M \in \R_\sym^{d \times d}$ be a non-zero symmetric matrix.
\begin{itemize}
  \item[(i)] If $\rank M = 1$, then $M = \pm a \odot a = \pm a \otimes a$ for a vector $a \in \R^d$.
  \item[(ii)] If $\rank M = 2$, then $M = a \odot b$ for vectors $a,b \in \R^d$ if and only if the two (non-zero, real) eigenvalues of $M$ have opposite signs.
\item[(iii)] If $\rank M \geq 3$, then $M$ cannot be written as $M = a \odot b$ for any vectors $a,b \in \R^d$.
\end{itemize}
\end{lemma}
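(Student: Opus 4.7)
The plan is to reduce everything to a spectral-theoretic statement, exploiting that any symmetric $M$ can be orthogonally diagonalized. Parts (i) and (iii) are essentially immediate from this, while (ii) splits into a Cauchy--Schwarz argument in one direction and an explicit two-dimensional construction in the other.

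For part (i), I would diagonalize: if $\rank M = 1$ and $\lambda\neq 0$ is the single non-zero eigenvalue with unit eigenvector $v$, then $M = \lambda\, v\otimes v$. Setting $a := \sqrt{\abs{\lambda}}\, v$ gives $M = \pm a\otimes a$, and the symmetry of $a\otimes a$ yields $a\otimes a = a\odot a$. Part (iii) is even shorter: by definition $a\odot b = \tfrac{1}{2}(a\otimes b + b\otimes a)$, and since each of $a\otimes b$ and $b\otimes a$ has rank at most $1$, subadditivity of rank gives $\rank(a\odot b)\leq 2$, which immediately forbids any representation $M = a\odot b$ when $\rank M \geq 3$.

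For part (ii), I would first prove the forward implication. Assume $M = a\odot b$ has rank $2$. Then $a,b$ are necessarily linearly independent (otherwise $M$ would have rank at most $1$), and $\ran M = \spn(a,b)$. Expressing $M|_{\ran M}$ in the basis $\{a,b\}$ using the identities $Ma = \tfrac{1}{2}(a\cdot b)a + \tfrac{1}{2}\abs{a}^2 b$ and $Mb = \tfrac{1}{2}\abs{b}^2 a + \tfrac{1}{2}(a\cdot b) b$ gives a $2\times 2$ matrix with determinant
\[
  \tfrac{1}{4}\bigl((a\cdot b)^2 - \abs{a}^2\abs{b}^2\bigr) < 0
\]
by the strict Cauchy--Schwarz inequality (strict because $a,b$ are independent). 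Hence the two non-zero eigenvalues of $M$ have product $<0$, so they have opposite signs.

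For the converse in (ii), given $M$ with $\lambda_1 > 0 > \lambda_2$ and orthonormal eigenvectors $v_1, v_2$, I would make the ansatz $a = \alpha v_1 + \beta v_2$, $b = \gamma v_1 + \delta v_2$. The condition $M = a\odot b$ reduces on $\spn(v_1,v_2)$ to the system $\alpha\gamma = \lambda_1$, $\beta\delta = \lambda_2$, $\alpha\delta + \beta\gamma = 0$, which is solved explicitly by $\alpha = \gamma := \sqrt{\lambda_1}$ and $\beta := \sqrt{-\lambda_2} = -\delta$. The main (very mild) obstacle is simply to verify that off-diagonal entries cancel and diagonal entries match; beyond this bookkeeping, the argument is entirely elementary linear algebra.
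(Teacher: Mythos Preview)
Your proof is correct and follows the same spectral-theoretic spirit as the paper, but the execution differs in instructive ways. For~(i) the paper starts from an arbitrary rank-one decomposition $M=c\otimes d$ and uses symmetry ($c_id_j=c_jd_i$) to force $c\parallel d$, whereas you go straight through the spectral theorem; both are fine. The real difference is in the forward direction of~(ii): the paper first orthogonally diagonalizes, reduces to the $2\times 2$ case, and then reads off the sign of $\lambda_1\lambda_2$ from the componentwise equations $a_1b_1=\lambda_1$, $a_2b_2=\lambda_2$, $a_1b_2+a_2b_1=0$. Your argument instead computes $\det\bigl(M|_{\spn(a,b)}\bigr)=\tfrac14\bigl((a\cdot b)^2-|a|^2|b|^2\bigr)$ in the (non-orthonormal) basis $\{a,b\}$ and invokes strict Cauchy--Schwarz; this is a genuinely cleaner, coordinate-free way to see the sign condition, though it relies implicitly on the fact that the determinant of a linear endomorphism is basis-independent and equals the product of its eigenvalues. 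For the converse in~(ii) both arguments give an explicit $2$-parameter construction --- the paper's choice $a=(\gamma,1)$, $b=(\lambda_1\gamma^{-1},\lambda_2)$ with $\gamma=\sqrt{-\lambda_1/\lambda_2}$ is a reparametrization of yours. Part~(iii) is dismissed as trivial in the paper; your rank-subadditivity remark makes this explicit.
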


\begin{proof}
\proofstep{Ad (i).} Every rank-one matrix $M$ can be written as a tensor product $M = c \otimes d$ for some vectors $c,d \in \R^d \setminus \{0\}$. By the symmetry, we get $c_i d_j = c_j d_i$ for all $i,j \in \{1,\ldots,d\}$, which implies that the vectors $c$ and $d$ are multiples of each other. We therefore find $a \in \R^d$ with $M = \pm a \otimes a$.

\proofstep{Ad (ii).} Assume first that $M = a \odot b$ for some vectors $a,b \in \R^d$. Clearly, \(M\) maps \(\spn\{a,b\}\) to itself and it is the zero map on the orthogonal complement, hence we may assume that \(d=2\).

Take an orthogonal matrix $Q \in \R^{2 \times 2}$ such that $QMQ^T$ is diagonal. We compute
\[
  QMQ^T = \frac{1}{2}Q \bigl( a \otimes b + b \otimes a \bigr) Q^T
  = \frac{1}{2} \bigl( Qa \otimes Qb + Qb \otimes Qa \bigr) = Qa \odot Qb,
\]
whence we may always assume without loss of generality that $M$ is already diagonal,
\[
  a \odot b = M = \begin{pmatrix} \lambda_1 &  \\  & \lambda_2 \end{pmatrix},
\]
where $\lambda_1,\lambda_2 \neq 0$ are the two eigenvalues of $M$. Writing this out componentwise, we get
\[
  a_1 b_1 = \lambda_1, \qquad a_2 b_2 = \lambda_2, \qquad a_1 b_2 + a_2 b_1 = 0.
\]
As $\lambda_1, \lambda_2 \neq 0$, also $a_1, a_2, b_1, b_2 \neq 0$, and hence
\[
  0 = a_1 b_2 + a_2 b_1 = \frac{a_1}{a_2} \lambda_2 + \frac{a_2}{a_1} \lambda_1.
\]
Thus, $\lambda_1$ and $\lambda_2$ must have opposite signs.

For the other direction, by transforming as before we may assume again that $M$ is diagonal:
\[
M=\sum_{i=1}^d \lambda_i v_i\otimes v_i,
\]
where \(\{v_i\}_i\) is an orthonormal basis of \(\R^d\). Since $\rank M = 2$, we know that  only two of the \(\lambda_i\) are non-zero. Hence, we can assume \(d=2\) and \(M\) to be diagonal, $M = \bigl( \begin{smallmatrix} \lambda_1 &  \\ & \lambda_2 \end{smallmatrix} \bigr)$, and that $\lambda_1$ and $\lambda_2$ do not have the same sign. Then, with $\gamma := \sqrt{-\lambda_1/\lambda_2}$,
we define
\[
  a := \begin{pmatrix} \gamma \\ 1 \end{pmatrix},  \qquad
  b := \begin{pmatrix} \lambda_1 \gamma^{-1} \\ \lambda_2 \end{pmatrix}.
\]
For $\lambda_1 > 0$, $\lambda_2 < 0$ say (the other case is analogous),
\[
  \lambda_1 \gamma^{-1} + \lambda_2 \gamma
  = \lambda_1 \sqrt{\frac{\abs{\lambda_2}}{\lambda_1}} - \abs{\lambda_2} \sqrt{\frac{\lambda_1}{\abs{\lambda_2}}}
  = 0,
\]
and therefore
\[
  a \odot b
  = \frac{1}{2} \begin{pmatrix} \lambda_1 & \lambda_2 \gamma \\ \lambda_1 \gamma^{-1} & \lambda_2 \end{pmatrix}
  + \frac{1}{2} \begin{pmatrix} \lambda_1 & \lambda_1 \gamma^{-1} \\ \lambda_2 \gamma & \lambda_2 \end{pmatrix}
  = \begin{pmatrix} \lambda_1 &  \\  & \lambda_2 \end{pmatrix}
  = M.
\]
This proves the claim.

\proofstep{Ad (iii).} This is trivial.
\end{proof}

In the remainder of this section, we will investigate in more detail two-dimensional BD-maps with fixed polar.  First, note that if $u \in \BD_\loc(\R^d)$, the map  \(\tilde u(x):= Q^{T}u(Qx)\), where \(Q\in \R^{d\times d}\), satisfies
\[
E\tilde u=Q^T Eu Q.
\] 
Hence, without loss of generality we may assume that $P$ in~\eqref{eq:incl} is  diagonal.

In the case $d=2$, according to Lemma~\ref{lem:sym_tensor_prod} we have three non-trivial cases to take care of, corresponding to the signs of the eigenvalues $\lambda_1$, $\lambda_2$; the trivial case $\lambda_1 = \lambda_2 = 0$, i.e., $P = 0$, was already settled in Lemma~\ref{lem:E_kernel}.

First, consider the situation that $\lambda_1, \lambda_2 \neq 0$ and that these two eigenvalues have opposite signs. Then, from (the proof of) Lemma~\ref{lem:sym_tensor_prod}, we know that $P = a \odot b$ ($a \neq b$) for
\[
  a := \begin{pmatrix} \gamma \\ 1 \end{pmatrix},  \qquad
  b := \begin{pmatrix} \lambda_1 \gamma^{-1} \\ \lambda_2 \end{pmatrix},
  \qquad\text{where}\qquad \gamma := \sqrt{-\frac{\lambda_1}{\lambda_2}}.
\]

The result about solvability of~\eqref{eq:incl} for this choice of $P$ is:

\begin{proposition}[Rigidity for $P = a \odot b$]
Let $P = \bigl( \begin{smallmatrix} \lambda_1 & \\ & \lambda_2 \end{smallmatrix} \bigr) = a \odot b$, where $\lambda_1, \lambda_2 \in \R$ have opposite signs. Then, there exists a map $u \in \BD_\loc(\R^2)$ solving the differential equation
\[
Eu = P\nu,  \qquad  \nu \in \Mcal_\loc(\R^2;\R),
\]
if and only if $\nu$ is of the form
\[
  \nu(\di x) = \mu_1(\di x \cdot a) + \mu_2(\di x \cdot b),
\]
where $\mu_1, \mu_2 \in \Mcal_\loc(\R)$. In this case, 
\begin{equation} \label{eq:P_symtens_u}
  u(x) = H_1(x \cdot a)b + H_2(x \cdot b)a + \omega(x), 
\end{equation}
with \(\omega\) a rigid deformation and $H_1, H_2 \in \BV_\loc(\R)$ satisfying $H_1' = \mu_1$ and $H_2' = \mu_2$.
\end{proposition}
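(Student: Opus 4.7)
My plan is to establish the biconditional and then recover the explicit form of $u$ via Lemma~\ref{lem:E_kernel}. I begin with the easy direction of sufficiency. Given $u$ of the form~\eqref{eq:P_symtens_u}, I would compute $Eu$ directly: the distributional derivative of $H_1(x\cdot a)\,b$ is the matrix-valued measure $b\otimes a$ times the pullback $\tilde\mu_1$ of $\mu_1$ along $x\mapsto x\cdot a$, and similarly the derivative of $H_2(x\cdot b)\,a$ is $a\otimes b$ times the analogous pullback $\tilde\mu_2$. Symmetrizing and using Lemma~\ref{lem:E_kernel} to discard the rigid deformation $\omega$ yields $Eu = (a\odot b)(\tilde\mu_1 + \tilde\mu_2) = P\nu$ with $\nu$ of the stated form.

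For the converse, the idea is to exploit the Saint-Venant compatibility condition. In two dimensions, by~\eqref{eq:curlcurl2D} this is a single scalar PDE for $Eu$. Substituting $Eu = P\nu$ with $P = \mathrm{diag}(\lambda_1,\lambda_2)$ produces the scalar equation
\[
\lambda_1\,\partial_{22}\nu + \lambda_2\,\partial_{11}\nu = 0.
\]
Because $\lambda_1$ and $\lambda_2$ have opposite signs, setting $\gamma := \sqrt{-\lambda_1/\lambda_2}$ this factors as the hyperbolic identity $(\partial_1 - \gamma\partial_2)(\partial_1 + \gamma\partial_2)\nu = 0$. A direct check, using $\lambda_1 = -\lambda_2\gamma^2$, shows that $\partial_1 - \gamma\partial_2$ annihilates functions of $x\cdot a = \gamma x_1 + x_2$, while $\partial_1 + \gamma\partial_2$ annihilates functions of $\gamma x_1 - x_2$, which is a nonzero scalar multiple of $x\cdot b$ since $b = \lambda_2(-\gamma,1)$. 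Applying the invertible linear change of variables $(y_1,y_2) := (x\cdot a,\, x\cdot b)$, the equation becomes the one-dimensional wave equation $\partial_{y_1 y_2}\tilde\nu = 0$.

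Next I would decompose solutions of this equation. At the distributional level, the classical argument is to set $w := \partial_{y_1}\tilde\nu$, note $\partial_{y_2} w = 0$ so $w = W(y_1)$, and observe that $\tilde\nu$ minus a primitive of $W$ in $y_1$ is annihilated by $\partial_{y_1}$ and hence depends only on $y_2$. I would then promote this to the measure level by testing $\tilde\nu$ against tensor-product functions $\phi(y_1)\psi(y_2)$: using Fubini together with the distributional identity, the one-dimensional marginals $\tilde\mu_1,\tilde\mu_2$ emerge as linear functionals on $\Crm_c(\R)$ with locally finite total variation inherited from $\tilde\nu$, hence as Radon measures on $\R$. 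Transporting back to $x$-coordinates gives the stated decomposition $\nu(\di x) = \mu_1(\di x\cdot a) + \mu_2(\di x\cdot b)$.

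Finally, to recover the explicit form of $u$, I would choose primitives $H_1, H_2 \in \BV_\loc(\R)$ of $\mu_1,\mu_2$, set $v(x) := H_1(x\cdot a)\,b + H_2(x\cdot b)\,a$, and apply the sufficiency computation to get $Ev = P\nu = Eu$. Then $E(u-v) = 0$, so by Lemma~\ref{lem:E_kernel} the difference $u-v$ equals a rigid deformation $\omega$, yielding~\eqref{eq:P_symtens_u}. The main obstacle I expect is precisely the measure-theoretic promotion of the distributional splitting of $\partial_{y_1 y_2}\tilde\nu = 0$: the distributional argument is short, but ensuring that the two one-dimensional pieces are genuine Radon measures (rather than merely distributions) requires some care, e.g.\ via mollification in one variable at a time and uniform control of total variations on slabs.
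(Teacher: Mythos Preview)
Your argument is correct, but it takes a genuinely different route from the paper's. You work with the \emph{measure} $\nu$ and exploit the Saint-Venant compatibility condition: the scalar identity $\curl\curl(P\nu)=0$ becomes, after your change of variables, the wave equation $\partial_{y_1 y_2}\tilde\nu=0$, whose d'Alembert decomposition gives the splitting of $\nu$; you then reconstruct $u$ via Lemma~\ref{lem:E_kernel}. The paper instead works directly with the \emph{map} $u$: after a linear change of coordinates reducing to $P=\sqrt{2}\,\ee_1\odot\ee_2$, it invokes the one-dimensional slicing identity for $\BD$-maps (Proposition~3.2 of~\cite{AmbrosioCosciaDalMaso97}) with $\xi=\ee_1$ and $\xi=\ee_2$. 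Since $\ee_i^T(\ee_1\odot\ee_2)\ee_i=0$, slicing forces $\partial_1 u_1=\partial_2 u_2=0$, so $u_1=H_2(x_2)$, $u_2=H_1(x_1)$; a Fubini argument then upgrades $H_1,H_2$ from $\Lrm^1_\loc$ to $\BV_\loc$.

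Your approach is arguably more in the spirit of the paper's overall theme (structure via the $\Curl\Curl$-free PDE constraint rather than via $\BD$-specific tools), and it is self-contained: it does not appeal to the external slicing result. The paper's route, on the other hand, avoids the measure-theoretic step you flag as the main obstacle---it never needs to argue that the d'Alembert pieces of a $2$D Radon measure are themselves $1$D Radon measures---and instead reduces everything to an elementary Fubini estimate. Both promotion steps (your $\mu_i\in\Mcal_\loc(\R)$ versus the paper's $H_i\in\BV_\loc(\R)$) are handled by the same mechanism: testing against tensor products $\phi\otimes\eta$ with a fixed $\eta$ and varying $\phi$.
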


Here, the notation $\mu_1(\di x \cdot a)$ denotes the measure $\gamma \in \Mcal_\loc(\R^2)$ that acts on Borel sets $B \subset \R^2$ as
\[
  \gamma(B) = \int_\R \mu_1 \bigl( B \cap (s a^\perp + \R a) \bigr) \dd s,
\]
where $a^\perp$ is a unit vector with $a \cdot a^\perp = 0$ (which is unique up to orientation). Likewise for $\mu_2(\di x \cdot b)$. Notice also that, since $a$ and $b$ are linearly independent, we could absorb the rigid deformation $r$ into $H_1$ and $H_2$.

\begin{proof}
By the chain rule in $\BV$ (see~\cite[Theorem~3.96]{AmbrosioFuscoPallara00book}), it is easy to deduce that all $u$ of the form~\eqref{eq:P_symtens_u} satisfy~\eqref{eq:incl} with $P = a \odot b$,  that is, $Eu = P\nu$ with $\nu \in  \Mcal_\loc(\R^2;\R)$.

For the other direction, we choose  \(Q\) to be an invertible matrix sending \(\{ \ee_1,  \ee_2\}\) to \(\{a,b\}\) and instead of $u$ work with $\tilde u(x):= Q^{T}u(Qx)$, for which
\[
  E \tilde{u} = \sqrt{2} (\ee_1 \odot \ee_2) \tilde{\nu} 
\]
with $\tilde{\nu} \in \Mcal_\loc(\R^2;\R)$. In the following we write simply $u$ in place of $\tilde{u}$.

We will use a slicing result~\cite[Proposition~3.2]{AmbrosioCosciaDalMaso97}, which essentially follows from Fubini's theorem: If for $\xi \in \R^2 \setminus \{0\}$ we define
\begin{align*}
  H_\xi &:= \setb{ x \in \R^2 }{ x \cdot \xi = 0 }, \\
  u_y^\xi(t) &:= \xi^T u(y + t\xi),  \qquad\text{where $t \in \R$, $y \in H_\xi$,}
\end{align*}
then the result in \emph{loc.~cit.\ }states
\begin{equation} \label{eq:Eu_slicing}
  \absb{\xi^T Eu \xi} = \int_{H_\xi} \absb{Du_y^\xi} \dd \Hcal^1(y)
  \qquad\text{as measures.}
\end{equation}
We have $Eu = \sqrt{2} (\ee_1 \odot \ee_2) \nu$, so if we apply~\eqref{eq:Eu_slicing} for $\xi = \ee_1$, we get
\[
  0 = \sqrt{2} \, \absb{\ee_1^T (\ee_1\odot \ee_2) \ee_1} \, \abs{\nu}
  = \int_{H_\xi} \absb{\partial_t u_1(y + t\ee_1)} \dd \Hcal^1(y),
\]
where we wrote $u = (u_1,u_2)$. This yields $\partial_1 u_1 = 0$ distributionally, whence $u_1(x) = H_2(x_2)$ for some $H_2 \in \Lrm_\loc^1(\R)$. Analogously, we find that $u_2(x) = H_1(x_1)$ with $H_1 \in  \Lrm_\loc^1(\R)$. Thus, we may decompose
\[
  u(x) = \begin{pmatrix} 0 \\ H_1(x_1) \end{pmatrix}
    + \begin{pmatrix} H_2(x_2) \\ 0 \end{pmatrix}
  = H_1(x \cdot \ee_1)\ee_2 + H_2(x \cdot \ee_2)\ee_1,
\]
and it only remains to show that $H_1, H_2 \in \BV_\loc(\R)$. For this, fix $\eta \in \Crm_c^1(\R;[-1,1])$ with $\int \eta \dd t = 1$ and calculate for all $\phi \in \Crm_c^1(\R;[-1,1])$ by Fubini's Theorem,
\begin{align*}
  2 \int \phi \otimes \eta \dd(Eu)_{12} &= - \int u_2 (\phi' \otimes \eta) \dd x - \int u_1 (\phi \otimes \eta') \dd x \\
  &= - \int H_1 \phi' \dd x_1 \cdot \int \eta \dd x_2 - \int u_1 (\phi \otimes \eta') \dd x.
\end{align*}
So, with $K := \supp \phi \times \supp \eta$,
\[
  \absBB{ \int H_1 \phi' \dd x } \leq 2 \abs{Eu}(K) + \norm{u_1}_{\Lrm^1(K)} \cdot \norm{\eta'}_\infty < \infty
\]
for all $\phi \in \Crm_c^1(\R)$ with $\norm{\phi}_\infty \leq 1$, hence $H_1 \in \BV_\loc(\R)$. Likewise, $H_2 \in \BV_\loc(\R)$, and we have shown the proposition.
\end{proof}

In the case $\lambda_1 \neq 0$, $\lambda_2 = 0$, i.e., $P = \lambda_1 (\ee_1 \odot \ee_1)$, one could guess by analogy to the previous case that if $u \in \BD_\loc(\R^2)$ satisfies $Eu = P\nu$ for some $\nu \in \Mcal_\loc(\R)$, then $u$ and $\nu$ should only depend on $x_1$ up to a rigid deformation. This, however, is \emph{false}, as can be seen from the following example.

\begin{example}
Consider
\[
  P := \begin{pmatrix} 1 & \\ & 0 \end{pmatrix},  \qquad
  u(x) := \begin{pmatrix} 4 x_1^3 x_2 \\ -x_1^4 \end{pmatrix},  \qquad
  g(x) := 12x_1^2 x_2.
\]
Then, $u$ satisfies $E u = Pg \, \Lcal^d$, but neither $u$ nor $g$ only depend on $x_1$.
\end{example}

The general statement reads as follows.

\begin{proposition}[Rigidity for $P = a \odot a$]
Let $P = \bigl( \begin{smallmatrix} \lambda_1 & \\ & 0 \end{smallmatrix} \bigr) = \lambda_1 (\ee_1 \odot \ee_1)$. Then, there exists a map $u \in \BD_\loc(\R^2)$ solving the differential equation
\[
  E u = P\nu,  \qquad  \nu \in \Mcal_\loc(\R^2;\R),
\]
if and only if $\nu$ is of the form
\[
  \nu(\di x) = \mu(\di x_1) + \gamma(\di x_1) \otimes \bigl( x_2 \, \Lcal^1(\di x_2) \bigr),
\]
where  $\mu, \gamma\in \Mcal_\loc(\R)$. In this case,
\[
  u(x) =  \lambda_1 \begin{pmatrix} H(x_1) + \Pcal'(x_1)x_2 \\ -\Pcal(x_1) \end{pmatrix} + \omega(x),
\]
with $\omega$ a rigid deformation  and $H \in \BV_\loc(\R)$, $\Pcal \in \Wrm^{1,\infty}_\loc(\R)$ with $\Pcal' \in \BV_\loc(\R)$ satisfying $H' = \mu$ and $\Pcal'' = \gamma$.
\end{proposition}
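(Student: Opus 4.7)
\emph{Sufficiency.} For $u$ of the stated form, a direct distributional computation gives $\partial_2 u_2 = 0$, $\partial_2 u_1 + \partial_1 u_2 = \lambda_1 \Pcal' - \lambda_1 \Pcal' = 0$, and $\partial_1 u_1 = \lambda_1(H' + \Pcal'' x_2) = \lambda_1(\mu + \gamma\, x_2)$, so that $Eu = P\nu$ with $\nu$ as claimed. Adding a rigid deformation does not affect $Eu$ by Lemma~\ref{lem:E_kernel}.

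\emph{Necessity.} The plan is to exploit the three scalar constraints in $Eu = P\nu$ successively. From $(Eu)_{22} = 0$, the slicing identity~\eqref{eq:Eu_slicing} with $\xi = \ee_2$ yields $u_2(x) = g(x_1)$ for some $g \in \Lrm^1_\loc(\R)$. Then $(Eu)_{12} = 0$ combined with this forces
\[
\partial_2 u_1 = -\partial_1 u_2 = -\partial_1 g \quad \text{in } \Dcal'(\R^2),
\]
so $\partial_2 u_1$ is a distribution independent of $x_2$ and in particular $\partial_{22} u_1 = 0$.

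The core structural step is to upgrade the two conditions $u_1 \in \Lrm^1_\loc(\R^2)$ and $\partial_{22} u_1 = 0$ to an affine representation $u_1(x_1, x_2) = h(x_1) + p(x_1)\, x_2$ with $h, p \in \Lrm^1_\loc(\R)$. For this I would fix $\phi \in \Crm_c^\infty(\R)$ and observe that $F_\phi(x_2) := \int u_1(x_1, x_2)\, \phi(x_1) \dd x_1 \in \Lrm^1_\loc(\R)$ satisfies $F_\phi'' = 0$ distributionally, hence agrees almost everywhere with an affine function $a_\phi + b_\phi x_2$. Pairing with kernels $\psi, \tilde\psi \in \Crm_c^\infty(\R)$ chosen so that $(\int \psi \dd x_2, \int x_2 \psi \dd x_2) = (1,0)$ and $(\int \tilde\psi \dd x_2, \int x_2 \tilde\psi \dd x_2) = (0,1)$ identifies $a_\phi = \int h\phi \dd x_1$ and $b_\phi = \int p\phi \dd x_1$, where $h(x_1) := \int u_1(x_1, x_2)\, \psi(x_2) \dd x_2$ and $p(x_1) := \int u_1(x_1, x_2)\, \tilde\psi(x_2) \dd x_2$ lie in $\Lrm^1_\loc(\R)$ by Fubini. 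Unwinding the identity $F_\phi = a_\phi + b_\phi x_2$ for arbitrary $\phi$ yields the affine decomposition.

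To finish, I would use that $(Eu)_{11} = \lambda_1 \nu$ is a finite measure. The slicing~\eqref{eq:Eu_slicing} with $\xi = \ee_1$ shows $u_1(\cdot, x_2) = h + p\, x_2 \in \BV_\loc(\R)$ for almost every $x_2$; taking the difference of two such slices with distinct $x_2$-values forces $p \in \BV_\loc(\R)$, whence also $h \in \BV_\loc(\R)$. Since $\partial_2 u_1 = p = -g'$, this upgrades $g$ to $\Wrm^{1,\infty}_\loc(\R)$ with $g' \in \BV_\loc(\R)$. Setting $H := h/\lambda_1$, $\Pcal := -g/\lambda_1$, $\mu := H'$, $\gamma := \Pcal''$ yields the claimed form of $u$ up to a rigid deformation (which absorbs the ambiguity in the choice of primitives) and the decomposition $\nu = \mu \otimes \Lcal^1 + \gamma \otimes (x_2 \Lcal^1)$ on $\R^2$. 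The only non-routine step is the affinity argument in the previous paragraph: extracting an explicit $\Lrm^1_\loc$ representation of $u_1$ from just $u_1 \in \Lrm^1_\loc$ and $\partial_{22} u_1 = 0$ is what does the real work, the remaining steps being either direct distributional verifications or routine applications of slicing.
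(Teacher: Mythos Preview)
Your argument is correct and takes a genuinely different route from the paper's. The paper mollifies to reduce to the smooth case, then uses the identity $\partial_k (Wu)_{ij} = \partial_j (Eu)_{ik} - \partial_i (Eu)_{kj}$ (equation~\eqref{eq:Wu_identity}) to compute $\nabla (Wu)_{12} = (\lambda_1 \partial_2 g, 0)$, from which $\partial_2 g$ depends only on $x_1$ and the decomposition $g(x) = h(x_1) + p(x_1) x_2$ follows immediately; the general case is then stated to follow ``by approximation''. You instead work directly at the distributional level: slicing gives $u_2 = g(x_1)$, the off-diagonal constraint yields $\partial_{22} u_1 = 0$, and your explicit $F_\phi$-argument extracts the affine-in-$x_2$ representation of $u_1$ without smoothing. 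Your approach is more self-contained in that it avoids the (undetailed) approximation step, and it also delivers the regularity upgrades $h, p \in \BV_\loc$ and $g \in \Wrm^{1,\infty}_\loc$ cleanly via a second application of slicing. The paper's route is shorter in the smooth case and ties into the $Wu$-identity machinery used elsewhere in Section~\ref{sec:rigidity} (and later in Theorem~\ref{thm:general rigidity}), so it is more unified with the surrounding arguments; but the passage to the limit is where the real work hides, and you have made that work explicit.
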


\begin{proof}
The necessity is again a simple computation.

For the sufficiency, assuming by a mollification argument that $u$ is smooth, there exists $g \in \Crm^\infty(\R^2)$ such that
\[
  \Ecal u = \lambda_1(\ee_1 \odot \ee_1) g  \qquad\text{and}\qquad
  E^s u = 0.
\]
We have from~\eqref{eq:Wu_identity} that
\[
  \partial_k (W u)_{ij} = \partial_j (E u)_{ik} - \partial_i (E u)_{kj}
  \qquad\text{for $i,j,k = 1,2$.}
\]
Thus,
\[
  \nabla (Wu)_{12} = (\lambda_1 \partial_2 g,0).
\]
This gives that $(W u)_{12}$ and hence also $\partial_2 g$ depend on the first component $x_1$ of $x$ only, $\partial_2 g(x) = p(x_1)$ say. Define
\[
  h(x) := g(x) - p(x_1) x_2
\]
and observe that $\partial_2 h = 0$. Hence we may write $h(x) = h(x_1)$ and have now decomposed $g$ as
\[
  g(x) = h(x_1) + p(x_1) x_2.
\]
This gives the claimed decomposition in the smooth case. The general case follows by approximation.
\end{proof}

Finally, we consider the case where the eigenvalues $\lambda_1$ and $\lambda_2$ are non-zero and have the same sign. Then, $P \neq a \odot b$ for any $a,b \in \R^2$ by Lemma~\ref{lem:sym_tensor_prod}. Define the differential operator
\[
  \Acal_P := \lambda_2 \partial_{11} + \lambda_1 \partial_{22}
\]
and notice that whenever a function $g \colon \R^2 \to \R$ satisfies $\Acal_P g = 0$ distributionally, the function \(\tilde g(x_1,x_2):=g(\sqrt{\abs{\lambda_2}} x_1, \sqrt{\abs{\lambda_1} }x_2)\) is harmonic (recall that \(\lambda_1, \lambda_2\) have the same sign). In particular,  by Weyl's lemma, $g$ is smooth.

\begin{proposition}[Rigidity for $P \neq a \odot b$] \label{prop:P_neq_a_odot_b_2D}
Let $P = \bigl( \begin{smallmatrix} \lambda_1 & \\ & \lambda_2 \end{smallmatrix} \bigr)$, where $\lambda_1, \lambda_2 \in \R$ have the same sign. Then, there exists a map $u \in \BD_\loc(\R^2)$ solving the differential equation
\[
  E u = P\nu,  \qquad  \nu \in \Mcal_\loc(\R^2;\R),
\]
if and only if $\nu$ satisfies
\[
  \Acal_P \nu = 0.
\]
Moreover, in this case both $\nu$ and $u$ are smooth.
\end{proposition}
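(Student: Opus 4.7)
The plan is to recast the problem in terms of the scalar Saint-Venant compatibility condition for $P\nu$ and to exploit the ellipticity of the resulting second-order scalar operator on $\nu$ to gain smoothness.

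For necessity, any $u \in \BD_\loc(\R^2)$ satisfies $\Curl\Curl(Eu) = 0$ distributionally, which for a symmetric matrix measure in two dimensions (cf.~\eqref{eq:curlcurl2D}) is the scalar identity
\begin{equation*}
\partial_{22}(Eu)_{11} - 2\partial_{12}(Eu)_{12} + \partial_{11}(Eu)_{22} = 0.
\end{equation*}
Substituting $Eu = P\nu$ with $P = \mathrm{diag}(\lambda_1,\lambda_2)$ makes the off-diagonal term vanish and leaves exactly $\lambda_1\partial_{22}\nu + \lambda_2\partial_{11}\nu = \Acal_P\nu = 0$.

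For sufficiency, assume $\Acal_P\nu = 0$. The remark preceding the proposition, combined with Weyl's lemma, yields $\nu \in \Crm^\infty(\R^2)$. Then $P\nu$ is a smooth symmetric matrix field with $\Curl\Curl(P\nu) = \Acal_P\nu = 0$, and the (distributional) Saint-Venant theorem recalled in the introduction supplies a smooth $u$ on the simply connected domain $\R^2$ with $Eu = P\nu$; by Lemma~\ref{lem:E_kernel}, $u$ is unique up to a rigid deformation. If one prefers a hands-on recovery, the ansatz
\begin{equation*}
u_1(x) := \lambda_1\int_0^{x_1}\nu(s,x_2)\dd s + f(x_2), \qquad u_2(x) := \lambda_2\int_0^{x_2}\nu(x_1,t)\dd t + g(x_1)
\end{equation*}
enforces the diagonal entries of $Eu = P\nu$ automatically, and differentiating the remaining constraint $\partial_2 u_1 + \partial_1 u_2 = 0$ in $x_1$ and using $\Acal_P\nu = 0$ reduces the identification of $g$ (and symmetrically $f$) to a trivially solvable second-order ODE.

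The substantive obstacle is purely the a~priori regularity of $\nu$: before smoothness is established, neither the explicit double integrals nor a classical Saint-Venant recovery are meaningful. This is exactly where the same-sign hypothesis on $\lambda_1,\lambda_2$ enters — it makes $\Acal_P$ elliptic, so Weyl's lemma disposes of the regularity issue in one stroke, after which the construction of $u$ is routine.
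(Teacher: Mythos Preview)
Your proof is correct and follows the same line as the paper: necessity via the two-dimensional Saint-Venant identity $\curl\curl(Eu)=0$ applied to $Eu=P\nu$, yielding $\Acal_P\nu=0$, and smoothness of $\nu$ (and hence $u$) from the ellipticity of $\Acal_P$ via Weyl's lemma. The only minor difference is in the sufficiency direction: the paper carries out an explicit two-step potential construction (first finding $f$ with $\nabla f=(-\lambda_1\partial_2\nu,\lambda_2\partial_1\nu)$, then building the full curl-free matrix $\Ucal=P\nu+\bigl(\begin{smallmatrix}0&-1\\1&0\end{smallmatrix}\bigr)f$ and integrating), whereas you invoke the Saint-Venant theorem directly and offer an alternative ansatz via iterated integrals; both routes are equally valid and rest on the same compatibility condition.
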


\begin{proof}
First assume that $g \in \Crm^\infty(\R^2)$ satisfies $\Acal_P g = 0$. Define 
\[
   F := (- \lambda_1 \partial_2 g, \lambda_2 \partial_1 g)
\]
and observe 
\[
  \curl F = - \lambda_1 \partial_{22} g - \lambda_2 \partial_{11} g = - \Acal_P g = 0.
\]
Hence, there exists $f \in \Crm^\infty(\R^2)$ with $\nabla f = F$, in particular
\begin{equation} \label{eq:solv_cond}
  \partial_1 f = - \lambda_1 \partial_2 g,  \qquad  \partial_2 f = \lambda_2 \partial_1 g.
\end{equation}
Put
\[
  \Ucal := \begin{pmatrix} \lambda_1 & 0 \\ 0 & \lambda_2 \end{pmatrix} g
  + \begin{pmatrix} 0 & -1 \\ 1 & 0 \end{pmatrix} f.
\]
We calculate (we apply the curl row-wise), using~\eqref{eq:solv_cond},
\[
  \curl \, \Ucal = \begin{pmatrix} \curl \, \bigl( \lambda_1 g, -f \bigr) \\
    \curl \, \bigl( f, \lambda_2 g \bigr) \end{pmatrix}
  = \begin{pmatrix} \lambda_1 \partial_2 g + \partial_1 f \\ 
    \partial_2 f - \lambda_2 \partial_1 g \end{pmatrix} = 0.
\]
Let $u \in \Crm^\infty(\R^2;\R^2)$ be such that $\nabla u = \Ucal$. Then, as distributions,  $E u = P g$.

For the other direction, it suffices to show that $Eu = P\nu$ for some $\nu \in \Mcal_\loc(\R^2)$ implies $\Acal_P \nu = 0$. The smoothness of $u,\nu$ then follows from Weyl's lemma as remarked above.  Since \(d=2\) we can exploit~\eqref{eq:curlcurl2D} to get that
\begin{equation}\label{e:ellipticg}
0=\curl \curl (Eu )=\curl \curl \biggl[ \begin{pmatrix} \lambda_1 & 0 \\ 0 & \lambda_2 \end{pmatrix} \nu \biggr]=\Acal_P \nu,
\end{equation}
so that the claim follows.
\end{proof}

\begin{remark}\label{rmk:ellipticity}
Note that the key point in the above lemma is that  whenever \(Eu=P\nu\) with \(P\ne a\odot b\) for any $a,b \in \R^2$, the fact that \(Eu\) is \(\curl\curl\)-free implies that the  measure \(\nu\) is actually a solution of an \emph{elliptic} PDE, namely~\eqref{e:ellipticg}. This is also the key fact underlying the proof of Theorem~\ref{thm:AlbertiBD} in the next section.
\end{remark}

\begin{remark}[Comparison to gradients]
Proposition~\ref{prop:P_neq_a_odot_b_2D} should be contrasted with the corresponding situation for gradients. If $u \in \Wrm_\loc^{1,1}(\R^2;\R^2)$ satisfies
\[
  \nabla u \in \spn \{ P \}  \qquad\text{pointwise a.e.,}
\]
and $\rank P = 2$, then necessarily $u$ is affine, a proof of which can be found, for instance, in~\cite[Lemma~3.2]{Rindler12} (this rigidity result is closely related to Hadamard's jump condition, also see~\cite[Proposition~2]{BallJames87},~\cite[Lemma~1.4]{DeLellis08},~\cite[Lemma~2.7]{Muller99} for related results). Notice that this behavior for the gradient is in sharp contrast to the behavior for the symmetrized gradient, as can be seen from the following example.
\end{remark}

\begin{example} \label{ex:weak_rigidity}
Let
\[
  P := \begin{pmatrix} 1 & \\ & 1 \end{pmatrix},  \qquad
  u(x) := \begin{pmatrix} \ee^{x_1} \sin(x_2) \\ -\ee^{x_1} \cos(x_2) \end{pmatrix},  \qquad
  g(x) := \ee^{x_1} \sin(x_2).
\]
Then, one can check that $g$ is harmonic (corresponding to $\Acal_P g = \Delta g = 0$) and $u$ satisfies $\Ecal u = Pg$. So, the fact that $P$ cannot be written as a symmetric tensor product does not imply that any solution to the differential inclusion $\Ecal u \in \spn \{P\}$ must be affine. However, as noted in Remark~\ref{rmk:ellipticity}, \(g\) is still ``rigid'' (in a weaker sense) since it has to satisfy an elliptic PDE.
\end{example}

We conclude this section with the following general version of the rigidity statements in every dimension; the proofs of~(i),~(ii) follow the same (elementary) strategy as above, whereas in~(iii) we see the first instance of an approach via the Fourier transform.
\begin{theorem}\label{thm:general rigidity}
Let \(u\in \BD_\loc (\R^d)\) and assume that
\[
Eu = P \nu
\]
for a fixed matrix $P \in \Rdds$  and a (signed) measure $\nu \in \Mcal_\loc(\R^d;\R)$. Then:
 \begin{itemize}
 \item[(i)] If $P = a \odot b$ for some $a,b \in \R^d$ with \(a \ne \pm b\), then there exist two functions \(H_1,H_2\in \BV_\loc(\R)\), a vector  \(v \in \spn\{a,b\}^\perp\), and a rigid deformation \(\omega\) such that 
\begin{align*}
 u(x)&=a\bigl(H_1(x\cdot b)+(x\cdot b)(x\cdot v)\bigr)+b \bigl(H_2(x\cdot a)+(x\cdot a)(x\cdot v)\bigr)
 \\
 &\qquad - v \, (x\cdot a)(x\cdot b) +\omega(x).
\end{align*}
 \item[(ii)] If $P = \pm a \odot a$ for some $a \in \R^d$, then there exist a function \(H\in \BV_\loc(\R)\), an orthonormal basis \(\{v_2,\ldots, v_j\}\) of \(\spn\{a\}^\perp\), functions \(\Pcal_j\in \Wrm^{1,\infty}_\loc(\R)\) with \(\Pcal'_j\in \BV_\loc(\R)\) ($j = 2,\ldots,d$), and a rigid deformation \(\omega\) such that 
 \[
 u(x)= a\biggl(H(x\cdot a)+\sum^d_{j=2}(x\cdot v_j) \Pcal'_j(x\cdot a)\biggr)-\sum^d_{j=2} v_j \Pcal_j(x\cdot a)+\omega(x).
 \]
\item[(iii)] If $P \neq a \odot b$ for any $a,b \in \R^d$, then $u$ and $\nu$ are smooth.
 \end{itemize}
\end{theorem}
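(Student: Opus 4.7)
My plan is to treat the three cases by separate but compatible arguments that mirror the two-dimensional model propositions of Section~\ref{sec:rigidity}: slicing plus the $Wu$-identity for (i) and (ii), and Fourier analysis of the Saint--Venant compatibility condition for (iii). In each case one first extracts information about $\nu$ alone, then reconstructs $u$ from $Eu=P\nu$ modulo a rigid deformation via Lemma~\ref{lem:E_kernel}.

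For (i) and (ii), after an orthogonal change of variables (using $E\tilde u=Q^T Eu\,Q$) one may assume $P$ is diagonal. Since $\xi^T Eu\,\xi=\xi^T P\xi\,\nu$ and $\xi^T P\xi=(\xi\cdot a)(\xi\cdot b)$ (resp.\ $\pm(\xi\cdot a)^2$), the slicing identity~\eqref{eq:Eu_slicing} implies that $\xi\cdot u$ is constant along $\xi$-lines whenever $\xi\in a^\perp\cup b^\perp$ (resp.\ $\xi\in a^\perp$). Letting $\xi$ vary over a spanning set and polarizing yields $\partial_i u_j+\partial_j u_i=0$ for every pair of indices not indexing a non-zero entry of $P$; this already forces the components of $u$ perpendicular to $\spn\{a,b\}$ (resp.\ $\spn\{a\}$) to behave rigidly in the perpendicular coordinates. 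The finer ``cross-terms'' $-v(x\cdot a)(x\cdot b)$ in (i) and $(x\cdot v_j)\Pcal_j'(x\cdot a)$ in (ii) are then read off from the identity~\eqref{eq:Wu_identity} applied to $Eu=P\nu$, which gives
\[
\partial_k(Wu)_{ij}=P_{ik}\partial_j\nu-P_{kj}\partial_i\nu.
\]
Integrating the resulting distributional derivatives of $u$, via $Du = Eu + Wu$, componentwise, and invoking the one-dimensional $\BV$-chain rule to produce the $\BV_\loc$-regularity of $H_1,H_2,H,\Pcal_j'$, yields the claimed ansatz; Lemma~\ref{lem:E_kernel} absorbs the remaining homogeneous solution into $\omega$.

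For (iii), Saint--Venant gives $\Curl\Curl(P\nu)=0$, which Fourier-transforms to $\Abb(\xi)P\,\hat\nu(\xi)=0$ for all $\xi$, with $\Abb$ the symbol~\eqref{e:symbol} of $\Curl\Curl$. The hypothesis $P\neq a\odot b$ for all $a,b$, together with~\eqref{eq:Lambda}, means $P\notin\Lambda_{\Curl\Curl}$, i.e.\ $\Abb(\xi)P\neq 0$ for every $\xi\in\Sbb^{d-1}$. Compactness of the sphere and homogeneity of $\Abb$ then give $|\Abb(\xi)P|^2\geq c|\xi|^4$ for some $c>0$, so $\nu$ is annihilated by the scalar elliptic fourth-order operator with symbol $|\Abb(\xi)P|^2$, and Weyl's lemma yields $\nu\in\Crm^\infty_\loc$. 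Smoothness of $u$ then follows from the rearrangement $\partial_{kl}u_i=\partial_k(Eu)_{il}+\partial_l(Eu)_{ik}-\partial_i(Eu)_{kl}$ of~\eqref{eq:Wu_identity}, combined once more with Lemma~\ref{lem:E_kernel}.

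The main obstacle is the bookkeeping in (i) and (ii) for $d\geq 3$: the extra $d-2$ (resp.\ $d-1$) directions in $\spn\{a,b\}^\perp$ (resp.\ $\spn\{a\}^\perp$) produce precisely the terms $-v(x\cdot a)(x\cdot b)$ and $\sum_j(x\cdot v_j)\Pcal_j'(x\cdot a)$ that are invisible in the two-dimensional propositions of Section~\ref{sec:rigidity}. Identifying these terms unambiguously, and verifying that no further cross-terms appear, requires a patient case analysis of which components of $(Wu)_{ij}$ are prescribed by $\nu$ and which are free; once this is done, the final ansatz is obtained by direct integration. For (iii) no such difficulty arises: ellipticity closes the argument in one step via Weyl's lemma.
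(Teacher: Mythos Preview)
Your treatment of~(i) and~(ii) is essentially the paper's: both arguments reduce to the identity~\eqref{eq:Wu_identity} to constrain the partials of $\nu$ (after assuming $a=\ee_1$, $b=\ee_2$), and then recognise that the displayed ansatz has the correct symmetrized gradient, so Lemma~\ref{lem:E_kernel} finishes. The paper regularizes first and works with a smooth density $g$ rather than mixing slicing with~\eqref{eq:Wu_identity}, but this is only an organisational difference; the ``bookkeeping'' you flag is exactly the content of the displayed computations of $\nabla(Wu)_{ij}$ in the paper's proof.

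Your argument for~(iii), however, is genuinely different and more direct than the paper's. The paper does not work with $\nu$ at all: it cuts off $u$, projects $\Ecal(\rho u)$ onto $(\spn\{P\})^\perp$, and constructs a $0$-homogeneous Fourier multiplier $\Mbf(\xi)$ inverting this projection on symmetrized gradients; the Mihlin theorem then gives $\Lrm^p$-bounds which are bootstrapped to smoothness via Korn's inequality. Your route instead extracts a scalar constant-coefficient elliptic equation for $\nu$ directly from $\Curl\Curl(P\nu)=0$ and the hypothesis $P\notin\Lambda_{\Curl\Curl}$, in exact analogy with the two-dimensional Proposition~\ref{prop:P_neq_a_odot_b_2D} and Remark~\ref{rmk:ellipticity}. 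This is shorter and more transparent; the paper's multiplier construction, on the other hand, is the same machinery that reappears in the proof of Lemma~\ref{lem:sym_qc_construction}, so it is being set up for reuse. One technical remark: since $\nu$ is only a local Radon measure you should not literally Fourier-transform it; phrase the step instead as ``the constant-coefficient operator $\nu\mapsto\Curl\Curl(P\nu)$ has injective symbol $\Abb(\xi)P$, hence its formal adjoint composed with itself is a scalar fourth-order elliptic operator annihilating $\nu$'', and then invoke local elliptic regularity. With that adjustment your argument for~(iii) is complete.
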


\begin{proof}

\proofstep{Ad (i).}
By regularization we can assume that \(u\) is smooth and that 
\[
\Ecal u= 2(a\odot b) g,  \qquad g \in \Crm^\infty(\R^d).
\] 
Recall from~\eqref{eq:Wu_identity} that for $Wu := \frac{1}{2} ( Du - Du^T )$ we have
\[
  \partial_k (W u)_{ij} = \partial_j (E u)_{ik} - \partial_i (E u)_{kj},
  \qquad\text{for $i,j,k = 1,\ldots,d$.}
\]
We assume without loss of generality that \(a=\ee_1\) and \(b=\ee_2\). Then,
\begin{align*}
\nabla  (Wu)_{12}&=-\nabla  (Wu)_{21}=(-\partial_1g,\partial _2 g,0,\ldots,0), && \\
\nabla  (Wu)_{1j}&=-\nabla  (Wu)_{j1}=(0,\partial_j g,0,\ldots,0)&& \text{for all \(j\geq 3\),}
\\
\nabla  (Wu)_{2j}&=-\nabla  (Wu)_{j2}=(\partial_j g,0,\ldots,0) &&\text{for all \(j\geq 3\),} \\
\nabla  (Wu)_{ij}&=0\qquad && \text{for all \(i,j\geq 3\).}
\end{align*}
From this we readily  deduce that \(\partial_j g = \mathrm{const}\) for \(j=3,\ldots, d\) and, applying the curl to the first equation, that
\[
\partial_{12}g = 0.
\]
Hence, we can write 
\begin{equation}\label{e:adolfo}
g(x)=\frac{h_1(x_2)+h_2(x_1)}{2}+(x\cdot v),
\end{equation}
where $h_1, h_2 \in \Crm^\infty(\R)$ and \(v\) is orthogonal to \(\spn\{\ee_1,\ee_2\}\).

We may compute that for the $u$ given in~(i) with $H_i$ defined via $H_i' = h_i$, $i=1,2$ (the shift of $H_i$ is arbitrary and can later be absorbed into the rigid deformation $r$) and $a := \ee_1$, $b := \ee_2$, we have that $\Ecal u= 2(a\odot b) g$ with the $g$ above. Thus, by Lemma~\ref{lem:E_kernel}, we conclude that our $u$ must have this form (we absorb a rigid deformation into $\omega$).

\proofstep{Ad (ii).}
We assume that \(P=\ee_1 \odot \ee_1\) and we  argue as above to deduce that
\[
\nabla  (Wu)_{1j}=-\nabla  (Wu)_{1j}=(\partial_j g,0,\ldots,0)\qquad \text{for all $j\geq 2$,}
\]
and \(\nabla (Wu)_{ij}=0\) if \(i,j \geq 2\). This implies that  
\[
\partial_j g(x)=p_j(x_1) \qquad\text{for all \(j\ge 2\)}
\]
for suitable functions \(p_j \in \Crm^\infty(\R)\).  Hence,
\begin{equation}\label{e:adolfo2}
2g(x)=h(x_1)+\sum_{j=2}^d x_j p_j(x_1)
\end{equation}
for some $h \in \Crm^\infty(\R)$. Again, defining $H$ via $H' = h$ and $\Pcal_j$ via $\Pcal_j'' = p_j$, we obtain for the $u$ given in~(ii) that $\Ecal u = 2(\ee_1 \odot \ee_1)g$ with $g$ as above. We conclude as before via Lemma~\ref{lem:E_kernel}.

\proofstep{Ad (iii).}
Let $L := \spn\{P\}$ and denote by $\Pbf \colon \Rdds \to \Rdds$ the orthogonal projection onto the orthogonal complement $L^\perp$ of $L$. For every smooth cut-off function $\rho \in \Crm^\infty_c(\R^d;[0,1])$ with $\rho \equiv 1$ on a bounded open set $U \subset \R^d$, the function $w := \rho u$ satisfies
\[
  \Ecal w = \rho \Ecal u + u \odot \nabla \rho.
\]
So,
\begin{equation} \label{eq:PDuf}
  \Pbf(\Ecal w) = \Pbf(u \odot \nabla \rho) =: R \in \Lrm^p(\R^d;\Rdds)
\end{equation}
with $p = d/(d-1)$ by the embedding $\BD_\loc(\R^d) \embed \Lrm^{d/(d-1)}_\loc(\Omega;\R^d)$~\cite{TemamStrang80}. 

Applying the Fourier transform (which we define for an integrable function $w$ via $\hat{w}(\xi) := \int w(x) \ee^{2\pi\ii x \cdot \xi} \dd x$) to both sides of~\eqref{eq:PDuf} and considering $\Pbf$ to be identified with its complexification (that is, $\Pbf(A+\ii B) = \Pbf(A) + \ii \Pbf(B)$ for $A,B \in \Rdds$), we arrive at
\[
  \Pbf(\widehat{\Ecal w}(\xi)) = (2\pi\ii) \, \Pbf(\hat{w}(\xi) \odot \xi) = \hat{R}(\xi).
\]
Here, we used that for a symmetrized gradient one has
\[
\widehat{\Ecal w}(\xi) =(2\pi \ii) \,\hat{w}(\xi)\odot \xi,\qquad \xi \in \R^d.
\]

The main point is to show (see below) that we may \enquote{invert} $\Pbf $ in the sense that if
\begin{equation} \label{eq:PnablawR}
  \Pbf(\widehat{\Ecal w}) = \hat{R}
\end{equation}
for some $w \in \Wrm^{1,p}(\R^d;\R^m)$, $R \in \Lrm^p(\R^d;L^\perp)$, then
\begin{equation} \label{eq:qc_multiplier}
  \widehat{\Ecal w}(\xi)
  = \Mbf(\xi) \hat{R}(\xi),  \qquad
  \xi \in \R^d \setminus \{0\},
\end{equation}
for some family of linear operators $\Mbf(\xi) \colon \Rdds \to \Rdds$ that depend smoothly and positively \mbox{$0$-homogeneously} on $\xi$.

We then infer from the Mihlin multiplier theorem (see for instance~\cite[Theorem~5.2.7]{Grafakos14book1}) that
\[
  \norm{\Ecal w}_{\Lrm^p}
  \leq C \norm{\Mbf}_{\Crm^{\floor{d/2}+1}} \norm{R}_{\Lrm^p}
  \leq C \norm{u}_{\Lrm^p}.
\]
So, also using $\rho \Ecal u = \Ecal w - u \odot \nabla \rho$, we get the estimate
\[
  \norm{\Ecal u}_{\Lrm^p(U)}
  \leq \norm{\Ecal w}_{\Lrm^p(\Omega)} + \norm{u \odot \nabla \rho}_{\Lrm^p(\Omega)}
  \leq C \norm{u}_{\Lrm^p(\Omega)}
\]
for some constant $C > 0$. In particular, by Korn's inequality~\eqref{e:korn}, \(u \in \Wrm_\loc^{1,p}(\Omega;\R^d) \subset\Lrm^{p^*}(\Omega;\R^d)\) for $p^* := dp/(d-p)$ if $d<p$ and $p^*=\infty$ if $p>d$). We can now iterate (``bootstrap'') via~\eqref{eq:PDuf} (which we also need to differentiate in order to get bounds on derivatives) to conclude that $u$ is smooth.

It remains to show~\eqref{eq:qc_multiplier}. Notice that $\Pbf(a \odot \xi) \neq 0$ for any $a \in \C^m \setminus \{0\}$, $\xi \in \R^d \setminus \{0\}$ by the assumption  on \(P\). Thus, for some constant $C > 0$ we have the \emph{ellipticity} estimate
\[
  \abs{a \odot \xi} \leq C \abs{\Pbf(a \odot \xi)}  \qquad
  \text{for all $a \in \C^m$, $\xi \in \R^d$.}
\]
The (complexified) projection $\Pbf \colon \C^{m \times d} \to \C^{m \times d}$ has kernel $L^{\C} := \spn_{\C} L$ (the complex span of $L$), which in the following we also denote just by $L$. Hence, $\Pbf$ descends to the quotient
\[
  [\Pbf] \colon \C^{m \times d} / L \to \ran \Pbf,
\]
and $[\Pbf]$ is an invertible linear map. For $\xi \in \R^d \setminus \{0\}$ let
\[
  \bigl\{ F, \ee_1 \odot \xi, \ldots, \ee_d \odot \xi, G_{d+1}(\xi),
  \ldots, G_{d^2-1}(\xi) \bigr\}
\]
be a $\C$-basis of $\C^{m \times d}$ with the property that the matrices $G_{d+1}(\xi), \ldots, G_{d^2-1}(\xi)$ depend smoothly on $\xi$ and are positively $1$-homogeneous in $\xi$, that is, $G_{d+1}(\alpha \xi) = \alpha G_{d+1}(\xi)$ for all $\alpha \geq 0$. Furthermore, for $\xi \in \R^d \setminus \{0\}$ denote by $\Qbf(\xi) \colon \C^{m \times d} \to \C^{m \times d}$ the (non-orthogonal) projection with
\begin{align*}
  \ker \Qbf(\xi) &= L, \\
  \ran \Qbf(\xi) &= \spn \bigl\{ \ee_1 \odot \xi, \ldots, \ee_d \odot \xi, G_{d+1}(\xi),
  \ldots, G_{d^2-k}(\xi) \bigr\}.
\end{align*}
If we interpret $\ee_1 \odot \xi, \ldots, \ee_d \odot \xi, G_{d+1}(\xi), \ldots, G_{d^2-1}(\xi)$ as vectors in $\R^{d^2}$ and collect them into the columns of the matrix $X(\xi) \in \R^{d^2 \times (d^2-1)}$, and if we further let $Y \in \R^{d^2 \times (d^2-1)}$ be a matrix whose columns comprise an orthonormal basis of $L^\perp$, then, up to a change in sign for one of the $G_l$'s, there exists a constant $c > 0$ such that
\[
  \det (Y^T X(\xi)) \geq c > 0, \qquad
  \text{for all $\xi \in \Sbb^{d-1}$.}
\]
Indeed,
 if $\det (Y^T X(\xi))$ was not uniformly bounded away from zero for all $\xi \in \Sbb^{d-1}$, then by compactness there would exist a $\xi_0 \in \Sbb^{d-1}$ with $\det (Y^T X(\xi_0)) = 0$, a contradiction. We can then write $\Qbf(\xi)$ explicitly as
\[
  \Qbf(\xi) = X(\xi)(Y^T X(\xi))^{-1}Y^T.
\]
This implies that $\Qbf(\xi)$ depends positively $0$-homogeneously and smoothly on $\xi \in \R^d \setminus \{0\}$. Also $\Qbf(\xi)$ descends to the quotient
\[
  [\Qbf(\xi)] \colon \C^{m \times d} / L \to \ran \Qbf(\xi),
\]
which is now invertible. It is not difficult to see that $\xi \mapsto [\Qbf(\xi)]$ is still positively $0$-homogeneous and smooth in $\xi \neq 0$ (by utilizing the basis given above). Since $\hat{w}(\xi) \odot \xi \in \ran \Qbf(\xi)$, we have
\[
  [\Qbf(\xi)]^{-1}(\hat{w}(\xi) \odot \xi) = [\hat{w}(\xi) \odot \xi],
\]
where $[\hat{w}(\xi) \odot \xi]$ designates the equivalence class of $\hat{w}(\xi) \odot \xi$ in $\C^{m \times d} / L$. This fact in conjunction with $\widehat{\Ecal w}(\xi) = (2\pi\ii) \, \hat{w}(\xi) \odot \xi$ allows us to rewrite~\eqref{eq:PnablawR} in the form
\[
  (2\pi\ii) \, [\Pbf] [\Qbf(\xi)]^{-1}(\hat{w}(\xi) \odot \xi) = \hat{R}(\xi),
\]
or equivalently as
\[
  \widehat{\Ecal w}(\xi) = (2\pi\ii) \, \hat{w}(\xi) \odot \xi = [\Qbf(\xi)] [\Pbf]^{-1} \hat{R}(\xi).
\]
The multiplier $\Mbf(\xi) \colon \Rdds \to \Rdds$ for $\xi \in \R^d \setminus \{0\}$ is thus given by
\[
  \Mbf(\xi) := [\Qbf(\xi)] [\Pbf]^{-1},
\]
which is smooth and positively $0$-homogeneous in $\xi$. Consequently, we have shown the multiplier equation~\eqref{eq:qc_multiplier}.
\end{proof}

\section{Singularities}  \label{sec:singularities}

In this section we sketch the proof of Theorem~\ref{thm:AlbertiBD} and present some of its implications concerning the structure of singularities that can occur in BD-maps. We will also outline how this type of argument allows one to recover the dimensionality results in Theorem~\ref{thm:BdstructIntro}.

\subsection{Proof sketch of Theorem~\ref{thm:AlbertiBD}}

To simplify the proof and to expose the main ideas as clearly as possible we assume again that we are working in dimension \(d=2\).  Our argument for BD-maps here is a bit more direct than the original one in~\cite{DePhilippisRindler16} and does not use Fourier analysis. We also make the connection to the rigidity results of Section~\ref{sec:rigidity} explicit. This stresses the crucial argument, namely to exploit the ellipticity contained in the condition \(\frac{\di \mu}{\di \abs{\mu}}(x_0) \notin \Lambda_{\Acal}\). Let us also note that by using the slicing properties of BD-maps~\cite[Proposition 3.4]{AmbrosioCosciaDalMaso97} and by arguing as in~\cite{Alberti93} (see also~\cite[Section~2]{DeLellis08}) one can recover the theorem in any dimension from this particular case.

We assume by contradiction that the set 
\[
E:=\setBB{ x\in \R^2 }{ \frac{\di E u}{\di \abs{Eu}}(x) \ne a \odot b \text{ for any $a,b \in \R^2$} }
\]
satisfies \(|E^s u|(E)>0\). We now want to zoom in around a generic  point \(x_0\in E\). To this end we recall the notion of tangent measure: For a vector-valued  Radon measure $\mu \in \Mcal_\loc(\R^d;\R^n)$ and $x_0 \in \R^d$, a \term{tangent measure} to $\mu$ at $x_0$ is any (local) weak* limit in the space $\Mcal_\loc(\R^d;\R^n)$ of the rescaled measures
\[
\mu^{x_0,r_k}:=c_k T^{x_0,r_k}_\# \mu 
\]
for some sequence $r_k \todown 0$ of radii and some sequence $c_k > 0$ of rescaling constants. The definition of the push-forward $T^{x_0,r_k}_\# \mu$ here expands to
\[
  [T^{x_0,r_k}_\# \mu](B) := \mu(x_0 + r_k B) \qquad
  \text{for any Borel set $B \subset \R^2$.}
\]
We denote by \(\Tan(\mu, x_0)\) the set of all possible tangent measures of $\mu$ at \(x_0\). It is a remarkable theorem of Preiss~\cite{Preiss87} (see, e.g.,~\cite[Proposition~10.5]{Rindler18book} for a proof in our notation) that for every measure \(\mu\), 	\(\Tan(\mu, x_0)\) contains at least one non-zero measure for \(\abs{\mu}\)-almost every \(x_0\). Furthermore, at \(\abs{\mu}\)-almost all points $x_0$,
\begin{equation} \label{eq:Tan_polar}
\Tan(\mu, x_0)=\frac{\di \mu}{\di \abs{\mu}}(x_0) \cdot \Tan(\abs{\mu}, x_0),	
\end{equation}
see~\cite[Lemma~10.4]{Rindler18book}. If one assumes that  \(|E^su|(E)>0\), it then follows by elementary arguments from measure theory, see, e.g.,~\cite[Proof of Theorem 1.1]{DePhilippisRindler16}  that there exists at least one point \(x_0\in E\) and a sequence of radii \(r_k\todown 0\) such that the following properties hold:
\begin{itemize}
\item[(i)] $\displaystyle \lim_{k\to \infty} \frac{|E^a u|(B_{r_k}(x_0))}{|E^s u|(B_{r_k}(x_0))}=0$; 
\item[(ii)] $\displaystyle \lim_{k\to \infty}  \dashint_{B_{r_k}(x_0)} \biggl|\frac{\di Eu }{\di |Eu|}(x)-\frac{\di Eu }{\di |Eu|}(x_0)\biggr|\dd|E^s u|(x)=0$;
\item[(iii)] there exists a positive Radon measure \(\sigma \in \Tan(\abs{E^s u},x_0)\) with $\sigma \restrict B_{1/2} \neq 0$ ($B_{1/2} := B_{1/2}(0)$) and such that 
\[
\sigma_k:=\frac{T^{x_0,r_k}_\#|E^s u|}{|E^s u|(B_{r_k}(x_0))}\toweakstar \sigma  \qquad\text{in $\Mcal_\loc(\R^2)$;}
\]
\item[(iv)] $\displaystyle P:= \frac{\di Eu}{\di |Eu|}(x_0)\ne a \odot b$ for any $a,b \in \R^d$.
\end{itemize}
Define  
\[
v_k(y):= \frac{r_k^{d-1}}{|E^s u|(B_{r_k}(x_0))} \, u(x_0+r_k y),  \qquad y \in \R^2.
\]
We have the following Poincar\'{e}-type inequality in BD, proved in~\cite{TemamStrang80}:
\[
  \inf_{\text{$\omega$ rigid deformation}} \norm{u + \omega}_\BD \lesssim |Eu|(\Omega),  \qquad u \in \BD(\Omega).
\]
Thus, we conclude that there exists a sequence of rigid deformations \(\omega_k\) and a map \(v\in \BD_\loc(\R^2)\) such that 
\[
(v_k+\omega_k) \toweakstar v \quad \text{in \(\BD_\loc(\R^2)\)}
\]
and \(v\) satisfies 
\[
E v=P \sigma \qquad\text{with}\qquad \text{$P \ne a \odot b$ for any $a,b \in \R^2$,}
\]
and \(\sigma=\abs{Ev}\) is a positive measure. By Proposition~\ref{prop:P_neq_a_odot_b_2D}, \(\sigma\) is smooth. Unfortunately, this is however not in contradiction with $\sigma \in \Tan(|E^su|,x_0)\setminus\{0\}$, since there are purely singular measures having only Lebesgue-absolutely continuous measures as tangents at almost all points, see~\cite[Example~5.9~(1)]{Preiss87}. In order to prove the theorem we thus have to exploit the ellipticity mentioned in Remark~\ref{rmk:ellipticity} in a more careful way.

Let us assume without loss of generality that \(P=\bigl(\begin{smallmatrix} 1 & \\ & 1 \end{smallmatrix}\bigr)\), so that $\Acal_P$ defined in Proposition~\ref{prop:P_neq_a_odot_b_2D} is the Laplace operator. By recalling that
\[
\Curl \Curl Ev_k=0
\]
we can use~\eqref{eq:curlcurl2D} to get, cf.~\eqref{e:ellipticg}, 
\begin{equation}\label{e:sigmak}
\Delta \sigma_k=\curl\curl \, (P\sigma_k) =\curl\curl \, (P\sigma_k-Ev_k).
\end{equation}
Furthermore, by combining (i) and (ii) above it is not hard to check that 
\[
\lim_{k\to \infty} \abs{Ev_k-P\sigma_k}(B_1)=0.
\]
We now take a cut-off function  \(\varphi \in \Crm_c^\infty(B_1;[0,1])\) with \(\varphi \equiv 1\) on \(B_{1/2}\). Exploiting the identity (in the sense of distributions)
\[
\partial_{ii} (\varphi \nu) = \varphi \partial_{ii} \nu + 2\partial_i(\partial _i \varphi \nu) - \nu \partial_{ii} \varphi,
\]
which is valid for any smooth  function \(\varphi\) and any measure \(\nu\), we get, using~\eqref{e:sigmak}, that
\[
\Delta (\varphi\sigma_k) = \phi \curl\curl Z_k +\Div R_k+S_k
\]
where \(Z_K\), \(R_k\), and \(S_k\) are measures supported in \(B_1\) and satisfying
\[
|Z_k|(B_1)\to 0, \qquad \sup_{k} \, \bigl( |R_k|(B_1)+|S_k|(B_1) \bigr) \lesssim 1.
\]
We apply  \(\Delta^{-1}\) to both sides of the above equation to get
\[
\begin{split}
\varphi\sigma_k&= \Delta^{-1}(\phi \curl\curl Z_k)+\Delta^{-1}\Div R_k+\Delta^{-1} S_k
\\
&=K_1 \conv Z_k+K_2 \conv R_k+K_3 \conv S_k,
\end{split}
\]
where 
\[
K_3(x)=\frac{1}{2\pi} \log|x|, \qquad K_2=DK_3,
\]
and \(K_1\) is a constant-coefficient polynomial in the second derivatives of \(K_3\). In particular, \(K_1\) is a Calder\'{o}n--Zygmund kernel and (see~\cite{Stein93book,Grafakos14book1})
\[
\abs{K_2}(x)\lesssim |x|^{-1},\qquad \abs{K_3}(x)\lesssim |\ln{|x|}|.
\]
By this and standard estimates~\cite{Stein93book,Grafakos14book1}, one easily sees that the sequences \((K_2 \conv R_k)_k\) and \((K_3 \conv R_k)_k\) are strongly  precompact in \(\Lrm^1\), and that 
\[
[K_1 \conv Z_k]_{1,\infty}:=\sup_{\lambda>0} \lambda \, \absb{ \setb{x}{ |(K_1 \conv Z_k)(x)|>\lambda }} \lesssim |Z_k|(B_1)\to 0.
\]
Furthermore, one easily checks that \(K_1 \conv Z_k \to 0\) in the distributional sense. It is then straightforward to combine the above facts with the \emph{positivity} of \(\sigma_k\) (see~\cite[Lemma~2.2]{DePhilippisRindler16} for details) to deduce that also the sequence \((\varphi\sigma_k)_k\) is precompact in \(\Lrm^1\), whereby
\[
\abs{ \varphi \sigma_k-\varphi \sigma}(B_1)\to 0.
\]
This is, however, in contradiction with  \(\sigma\) being  absolutely continuous (which follows from Proposition~\ref{prop:P_neq_a_odot_b_2D}) and  \(\sigma_k\) being singular. Indeed, if we let \(G_k\) be the null set where \(\sigma_k\) is concentrated, we obtain that 
\[
0<\abs{\sigma}(B_{1/2})=\abs{\sigma}(B_{1/2}\setminus G_k)=\abs{\sigma-\sigma_k}(B_{1/2}\setminus G_k)\le \abs{ \varphi (\sigma_k- \sigma)}(B_1)\to 0,
\]
which is impossible. \qed

\subsection{Local structure of singularities}\label{s:singloc}

As we mentioned in the introduction, Alberti's rank-one theorem,  Theorem~\ref{thm:Bvstruct}~(iii), implies a strong constraint on the possible behaviors of singularities of BV-maps. In particular, even at points $x_0 \in \Omega$ around which $u \in \BV(\Omega;\R^m)$ has a Cantor-type (e.g.\ fractal) structure, the \enquote{slope} of $u$ has a well-defined \emph{direction}. This is made precise in the following important consequence of Alberti's theorem.

\begin{corollary} \label{cor:Alberti_1D}
Let $u \in \BV_\loc(\R^d;\R^\ell)$. Then, at $\abs{D^s u}$-almost every $x_0$ every tangent measure $\sigma \in \Tan(D^s u,x_0)$ is $b$-directional for some direction $b \in \Sbb^{d-1}$ in the sense that
\[
  \sigma(B + v) = \sigma (B)
\]
for all bounded Borel sets $B \subset \R^d$ and all $v \in \R^d$ orthogonal to $b$.
\end{corollary}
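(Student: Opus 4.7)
My plan is to combine Alberti's rank-one theorem (Theorem~\ref{thm:Bvstruct}(iii)) with the tangent--polar identity \eqref{eq:Tan_polar} and the fact that tangent measures inherit the curl-free condition $\Curl Du=0$.

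First I would fix $x_0$ in the intersection of the following full $\abs{D^s u}$-measure sets: (a) the Besicovitch polar exists and has the rank-one form $\frac{\di Du}{\di\abs{Du}}(x_0)=a\otimes b$ with $a\in\R^\ell$, $b\in\Sbb^{d-1}$; (b) the identity \eqref{eq:Tan_polar} holds; (c) $\abs{D^a u}(B_r(x_0))/\abs{D^s u}(B_r(x_0))\to 0$ as $r\downarrow 0$. Condition (c) together with a suitable choice of normalising constants ensures that along any sequence of radii producing a tangent of $D^s u$, the rescalings of $D^a u$ vanish weakly*, so the same rescaling applied to $Du$ converges to the same limit. Hence $\Tan(D^s u,x_0)\subset\Tan(Du,x_0)$, and (a) combined with (b) gives, for every $\sigma\in\Tan(D^s u,x_0)$, a positive measure $\tau\in\Tan(\abs{D^s u},x_0)$ with
\[
\sigma=(a\otimes b)\,\tau.
\]

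Next I would observe that the row-wise curl-free constraint $\partial_k(Du)_{ij}=\partial_j(Du)_{ik}$ characterising distributional gradients is a homogeneous linear constant-coefficient system that commutes with translations and only rescales by an overall power of $r_k$ under the dilations $T^{x_0,r_k}_\#$. Hence every rescaling of $Du$ remains curl-free, and the condition survives in the weak* limit, so $\sigma$ itself satisfies $\partial_k\sigma_{ij}=\partial_j\sigma_{ik}$ for all $i,j,k$. Plugging in $\sigma_{ij}=a_i b_j\tau$ and choosing an index $i_0$ with $a_{i_0}\neq 0$ yields the distributional identity
\[
b_j\,\partial_k\tau=b_k\,\partial_j\tau\qquad\text{for all $j,k=1,\dots,d$,}
\]
which says exactly that $\nabla\tau$ is proportional to $b$ as a vector of distributions: $\partial_j\tau=b_j\lambda$ for the scalar distribution $\lambda:=\partial_{k_0}\tau/b_{k_0}$ (any $k_0$ with $b_{k_0}\neq 0$). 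In particular $v\cdot\nabla\tau=0$ whenever $v\perp b$, and the classical fact that a distribution with a vanishing directional derivative in direction $v$ is invariant under translations by $v$ gives $\tau(B+v)=\tau(B)$ for all bounded Borel $B\subset\R^d$ and all $v\perp b$. This translation invariance transfers verbatim to $\sigma=(a\otimes b)\tau$, which is the desired $b$-directionality.

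The step I expect to need most care is the identification of tangents used in the first paragraph, i.e.\ the inclusion $\Tan(D^s u,x_0)\subset\Tan(Du,x_0)$: it is a standard consequence of the mutual singularity of $D^a u$ and $D^s u$ together with Lebesgue differentiation, but one must check that the rescaling constants $c_k$ producing a non-trivial tangent of $D^s u$ send the rescalings of $D^a u$ to zero. Once this is in place, the remainder of the argument is the short symbolic manipulation of the curl-free condition indicated above.
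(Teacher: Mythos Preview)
Your proposal is correct and follows precisely the standard route that the paper defers to (namely \cite[Corollary~10.8]{Rindler18book}): combine Alberti's rank-one theorem with~\eqref{eq:Tan_polar} to write any $\sigma\in\Tan(D^su,x_0)$ as $(a\otimes b)\tau$, observe that the curl-free constraint on $Du$ passes to tangent measures (after noting $\Tan(D^su,x_0)\subset\Tan(Du,x_0)$ at $|D^su|$-a.e.\ point), and then read off $b$-directionality from $b_j\partial_k\tau=b_k\partial_j\tau$. The only point requiring care, as you correctly flag, is the inclusion of tangent cones, which is indeed a standard consequence of mutual singularity and the freedom to choose $c_k\sim |D^su|(B_{r_k}(x_0))^{-1}$.
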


For the proof see for instance~\cite[Corollary~10.8]{Rindler18book}.

Combining Theorem~\ref{thm:AlbertiBD}  with Theorem~\ref{thm:general rigidity} one can obtain some structural information on tangent measures for \(\BD\) maps. In fact, also exploiting the decomposition~\eqref{eq:Tan_polar}, which involves only \emph{positive} measures after the fixed polar, the structure results of Theorem~\ref{thm:general rigidity} can be improved for tangent measures.\footnote{We gratefully acknowledge Adolfo Arroyo-Rabasa for pointing this out to us.}

\begin{theorem} \label{thm:BD_tangent_decomp}
Let $u \in \BD_\loc(\R^d)$. Then, at  all point such that $\abs{E^s u}$-almost every $x_0$  the following holds: for all \(\sigma \in \Tan(|E^su|,x_0) \) there exists \(w\in \BD_\loc(\R^d)\) such that
\[
Ew = (a \odot b) \sigma,
\]
where $a,b \in \R^d$ are such that 
\[
\frac{\di E^s u}{\di |E^s u|}(x_0)=a\odot b.
\]
 Moreover:
 \begin{itemize}
 \item[(i)] If $a \neq \pm b$, then there exist two functions \(H_1,H_2\in \BV_\loc(\R)\) such that 
\[
 w(x)= aH_1(x\cdot b)+b H_2(x\cdot a).
\]
 \item[(ii)] If $a = \pm b$, then there exist a function \(H\in \BV_\loc(\R)\) such that 
 \[
w(x)= aH(x\cdot a).
 \]
 \end{itemize}
\end{theorem}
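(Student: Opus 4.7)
The strategy is to blow up at a generic point $x_0$, apply the general rigidity statement, Theorem~\ref{thm:general rigidity}, to the limit BD map, and then exploit the positivity of $\sigma$ to eliminate the lower-order terms that the rigidity formula allows.

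First, at $|E^s u|$-a.e.\ $x_0$, Theorem~\ref{thm:AlbertiBD} gives $\frac{\di E^s u}{\di|E^s u|}(x_0)=a\odot b$. For any prescribed $\sigma\in\Tan(|E^s u|,x_0)$, choose a sequence $r_k\todown 0$ realizing $\sigma$ as a tangent, and set
\[
v_k(y):=\frac{r_k^{d-1}}{|E^s u|(B_{r_k}(x_0))}\,u(x_0+r_k y), \qquad y\in\R^d.
\]
The BD-Poincar\'e inequality produces rigid deformations $\omega_k$ such that, along a subsequence, $v_k+\omega_k\toweakstar w_0$ in $\BD_\loc(\R^d)$; proceeding exactly as in the blow-up argument in the proof sketch of Theorem~\ref{thm:AlbertiBD}, one obtains $Ew_0=(a\odot b)\sigma$.

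Next, apply Theorem~\ref{thm:general rigidity} to $w_0$ with $P=a\odot b$. In case~(i), $a\ne\pm b$, this yields $H_i\in\BV_\loc(\R)$, $v\in\spn\{a,b\}^\perp$, and a rigid deformation $\omega$ such that $w_0$ is of the form given in Theorem~\ref{thm:general rigidity}(i). A direct computation of $Ew_0$ (compare~\eqref{e:adolfo}) shows that the scalar measure $\sigma$ in $Ew_0=(a\odot b)\sigma$ equals
\[
\sigma=H_1'(x\cdot b)+H_2'(x\cdot a)+2(x\cdot v)\,\Lcal^d,
\]
where $H_i'(x\cdot\xi)$ denotes the cylindrical Radon measure on $\R^d$ obtained by pulling $H_i'\in\Mcal_\loc(\R)$ back through $x\mapsto x\cdot\xi$. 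Case~(ii), $a=\pm b$, gives an analogous identity $\sigma=H'(x\cdot a)+\sum_{j=2}^d(x\cdot v_j)\Pcal_j''(x\cdot a)$.

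Finally, exploit that $\sigma\ge 0$. The Lebesgue-absolutely continuous part of $\sigma$ must be non-negative $\Lcal^d$-a.e.; in case~(i) it reads $h_1(x\cdot b)+h_2(x\cdot a)+2(x\cdot v)$, with $h_i:=\di H_i'/\di\Lcal^1$. If $v\ne 0$, then on $\Lcal^{d-1}$-a.e.\ line parallel to $v$ both $x\cdot a$ and $x\cdot b$ are constant (since $v\perp a,b$), so this density restricts to an affine function of the line parameter with slope $2|v|^2\ne 0$, hence negative on a half-line---contradicting positivity. Thus $v=0$. An identical argument along lines parallel to each $v_j$ in case~(ii) forces $\Pcal_j''=0$; substituting the resulting affine $\Pcal_j(t)=\alpha_j+\beta_j t$, the remaining cross terms $a(x\cdot V)-(x\cdot a)V$, with $V:=\sum_j\beta_j v_j\in\spn\{a\}^\perp$, combine into a rigid deformation since $a\otimes V-V\otimes a$ is skew-symmetric. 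Hence in both cases $w_0=aH_1(x\cdot b)+bH_2(x\cdot a)+\omega'$ (resp.\ $w_0=aH(x\cdot a)+\omega'$) for some rigid deformation $\omega'$; setting $w:=w_0-\omega'$ yields $Ew=(a\odot b)\sigma$ with $w$ of the required form, since $E\omega'=0$. The main difficulty lies precisely in this positivity step, which requires cleanly separating the $\Lcal^d$-singular cylindrical parts of the $H_i'$ from the AC polynomial correction in order to apply the affine-along-lines argument.
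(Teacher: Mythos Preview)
Your proof follows the same overall strategy as the paper's: blow up to obtain $w_0\in\BD_\loc(\R^d)$ with $Ew_0=(a\odot b)\sigma$, invoke the rigidity Theorem~\ref{thm:general rigidity}, and then exploit positivity of $\sigma$ to kill the extra terms. Your positivity argument in case~(i) is correct and is a pointwise variant of what the paper does (the paper integrates $\sigma$ against translated product test functions $\phi(x')\psi(x_1-t)$ and sends $t\to-\infty$; you instead inspect the $\Lcal^d$-density along lines parallel to $v$). Since in case~(i) the only non-cylindrical contribution to $\sigma$ is the purely absolutely continuous term $2(x\cdot v)\,\Lcal^d$, restricting attention to the AC density is indeed enough there.

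There is, however, a gap in case~(ii). Your ``identical argument along lines parallel to each $v_j$'' examines only the $\Lcal^d$-density of $\sigma$, and hence only shows that the absolutely continuous part of $\Pcal_j''$ vanishes. But $\Pcal_j''\in\Mcal_\loc(\R)$ may carry a singular part, and then the term $(x\cdot v_j)\,\Pcal_j''(x\cdot a)$ is itself singular with respect to $\Lcal^d$; positivity of the AC density says nothing about it. Concretely, if $\Pcal_2''=\delta_0$ and the remaining terms are arranged suitably, the AC density of $\sigma$ is identically zero while the singular part $(x\cdot v_2)\,\delta_0(x\cdot a)\otimes\Lcal^{d-1}$ changes sign. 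To close the gap you must argue at the level of measures, exactly as the paper does: test $\sigma$ against $\phi(x\cdot a)\,\psi\bigl((x\cdot v_j)-t\bigr)\,\eta(\text{remaining coords})\ge 0$; the resulting pairing is affine in $t$ with slope proportional to $\int\phi\,\di\Pcal_j''$, which forces $\Pcal_j''=0$ as a measure. Once this is in place, your remaining computation---in particular the observation that $a(x\cdot V)-(x\cdot a)V$ with $V\in\spn\{a\}^\perp$ is a rigid deformation---is correct and completes the proof.
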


\begin{proof}
Given a tangent measure \(\sigma \in \Tan(|E^su|,x_0) \), by arguing as in the proof of Theorem~\ref{thm:AlbertiBD}, one gets a sequence \(r_k\todown 0\) and a sequence of rigid deformations \(\omega_k\) such that the maps 
\[
v_k(y):= \frac{r_k^{d-1}}{|E^s u|(B_{r_k}(x_0))} \, u(x_0+r_k y)+\omega_k(y), \qquad y \in \R^d,
\]
converge to a map \(w \in \BD_\loc(\R^d)\) with
\[
Ew=\frac{\di Eu}{\di\abs{Eu}}(x_0) \, \sigma.
\]
By Theorem~\ref{thm:AlbertiBD}, 
\[
\frac{\di Eu}{\di\abs{Eu}}(x_0)=a\odot b
\]
for some $a,b \in \R^d$. Thus, case~(i) or case~(ii) of Theorem~\ref{thm:general rigidity} applies. 
Assume for instance \(a\ne \pm b\). Then, 
\[
\begin{split}
 w(x)&=a\bigl(H_1(x\cdot b)+(x\cdot b)(x\cdot v)\bigr)+b \bigl(H_2(x\cdot a)+(x\cdot a)(x\cdot v)\bigr)
 \\
 &\qquad - v \, (x\cdot a)(x\cdot b)+\omega(x)
\end{split}
\]
and, by \eqref{e:adolfo},
\[
	\sigma=H_1'(\di x \cdot b)+H_2'(\di x \cdot a)+2(x \cdot v) \, \Lcal^{d}(\di x).
\]
First, we observe that we may assume $\omega = 0$ since we may just subtract it from $w$.

We claim that since   \(\sigma\) is a positive measure and  \(v \in \spn\{a,b\}^\perp\), this implies that \(v=0\), so that the conclusion holds. To prove the claim, assume without loss of generality that \(a = \ee_3\), \(b=\ee_2\) and that \(v=\alpha \ee_1\) with \(\alpha \ge 0\).  Let \(\varphi\in \Crm^0_c(\R^{d-1};[0,1])\), \(\psi \in  \Crm^0_c([0,1];[0,1])\) and let \(t\in \R\). By integrating \(\sigma\) against \(\varphi(x')\psi (x_1-t)\) (\(x=(x_1,x')\)) we get
\begin{align*}
0 &\leq \int \varphi(x')\psi (x_1-t) \dd \sigma\\
&\leq \biggl(\int \psi \dd \Lcal^1\biggr) \cdot \biggl(\int \varphi \dd H_1'(x_2)\dd \Lcal^{d-1}+\int \varphi \dd H_2'(x_3)\dd \Lcal^{d-1}\biggr)
\\
&\qquad + 2 \alpha  \int \varphi  \dd \Lcal^{d-1} \cdot \int_{t}^{t+1}y \dd \Lcal^{1}(y).
\end{align*}
Since the first term on the right hand side of the above equation  is independent of \(t\),  by letting \(t\to -\infty\) we get that \(\alpha=0\), which is the desired conclusion.  In the same way, if \(a=\pm b\), one uses Theorem~\ref{thm:general rigidity} (ii),  \eqref{e:adolfo2}, and the positivity of \(\sigma\) to conclude in a similar way.
\end{proof}

Note that according to the preceding result the structure of possible  tangent BD-maps can be quite complicated. However, if we additionally know \(x_0 \in J_u\), as a consequence of the structural Theorem~\ref{thm:BdstructIntro} we obtain that the tangent map at this point has a much simpler structure, namely
\[
w=w^+\ONE_{\{x\cdot n >0\}}+w^-\ONE_{\{x\cdot n <0\}}
\]
for some $w^\pm \in \R^d$ and $n \in \Sbb^{d-1}$ (in fact, $n = \nu_{J_u}$); in particular, $w$ is one-directional.

At a generic point we can still prove that there is always \emph{at least one} one-directional tangent measure. Indeed, one has the following result, proved in~\cite[Lemma~2.14]{DePhilippisRindler17}:

\begin{theorem}[Very good singular blow-ups] \label{thm:very_good_blowups}
Let $u \in \BD_\loc(\R^d)$. Then, at $\abs{E^s u}$-almost every $x_0$  \emph{there exist} \(\sigma \in \Tan(|E^su|,x_0)\) and  \(w\in \BD_\loc(\R^d)\) such that
\[
Ew =(a\odot b) \sigma
\]
where $a,b \in \R^d$ are such that 
\[
\frac{\di E^s u}{\di |E^s u|}(x_0)=a\odot b,
\]
and 
\[
  w(x) =  \eta G(x \cdot \xi) + A(x).
\]
Here, \(\{\xi, \eta\}=\{a,b\}\),   $G \in \BV_\loc(\R)$, and $A \colon \R^d \to \R^d$ is an affine map.
\end{theorem}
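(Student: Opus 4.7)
The strategy is to refine the first tangent produced by Theorem~\ref{thm:BD_tangent_decomp} through a second blow-up, invoking the standard measure-theoretic fact (Preiss's theorem on tangents of tangents) that a tangent of $\sigma_0$ at a $\sigma_0$-generic point is again a tangent of $|E^s u|$ at $x_0$ for an appropriate rescaling. If the polar at $x_0$ is of the form $\pm a\odot a$, Theorem~\ref{thm:BD_tangent_decomp}(ii) already yields a tangent map $w(x) = aH(x\cdot a)$, matching the desired form with $\xi = \eta = a$, $G = H$, and $A = 0$.

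The substantive case is $P = a\odot b$ with $a \neq \pm b$, in which Theorem~\ref{thm:BD_tangent_decomp}(i) produces
\[
w_1(x) = aH_1(x\cdot b) + bH_2(x\cdot a), \qquad \sigma_0 = \sigma_a + \sigma_b \in \Tan(|E^s u|, x_0),
\]
where $\sigma_a := H_1'(\di x\cdot b)$ is invariant under translations in $b^\perp$ and $\sigma_b := H_2'(\di x\cdot a)$ is invariant under translations in $a^\perp$, both non-negative. By Besicovitch differentiation, the densities $\theta_a := \di\sigma_a/\di\sigma_0$ and $\theta_b := \di\sigma_b/\di\sigma_0$ exist $\sigma_0$-almost everywhere and satisfy $\theta_a + \theta_b = 1$. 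Assume first that the set $\{\theta_a = 1\}$ has positive $\sigma_0$-measure. Pick $y_0$ in this set: the second blow-up of $\sigma_0$ at $y_0$ then coincides in the limit with that of $\sigma_a$, and, since $\sigma_a$ is $b^\perp$-invariant on every scale, this limit has the form $\nu(\di x\cdot b)$ for a one-dimensional tangent $\nu$ of $H_1'$ at $y_0\cdot b$. Applying Theorem~\ref{thm:BD_tangent_decomp} to this second-generation tangent (the $H_2'$ component is now identically zero, so the corresponding term in the tangent map is affine and gets absorbed into $A$) yields $w(x) = aG(x\cdot b) + A(x)$ with $G' = \nu$, which is the required form with $\xi = b$ and $\eta = a$. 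The symmetric case $\sigma_0(\{\theta_b = 1\}) > 0$ gives $\xi = a$, $\eta = b$.

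The main obstacle is the fully mixed case where $\theta_a, \theta_b \in (0,1)$ holds $\sigma_0$-almost everywhere, i.e., $\sigma_a$ and $\sigma_b$ are mutually absolutely continuous. The plan is to argue that any $\Lcal^d$-singular part of $\sigma_a$ is concentrated on a countable union of hyperplanes orthogonal to $b$, and any $\Lcal^d$-singular part of $\sigma_b$ on hyperplanes orthogonal to $a$; since $a$ and $b$ are linearly independent, these two families of hyperplanes meet only on a $(d-2)$-dimensional set, so mutual absolute continuity forces $H_1'$ and $H_2'$ to be absolutely continuous one-dimensional measures. Then $\sigma_0$ admits a locally integrable Lebesgue density, and at any of its Lebesgue points the second blow-up is a constant multiple of $\Lcal^d$; the associated BD-map is then globally affine, matching the statement with $G \equiv 0$ and $A$ this affine map. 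Together these three alternatives cover $|E^s u|$-almost every $x_0$.
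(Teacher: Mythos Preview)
Your strategy is exactly the paper's: start from the structured tangent of Theorem~\ref{thm:BD_tangent_decomp}, then perform a second blow-up and invoke Preiss's theorem that tangents of tangents are tangents. The paper's case split is phrased slightly differently---it asks directly whether $H_1'$ or $H_2'$ has a singular part rather than going through the densities $\theta_a,\theta_b$---but the two organizations are equivalent.

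Two points need tightening. First, you assert that $\sigma_a:=H_1'(\di x\cdot b)$ and $\sigma_b:=H_2'(\di x\cdot a)$ are both non-negative; this is not automatic from Theorem~\ref{thm:BD_tangent_decomp}, which only gives $\sigma_a+\sigma_b=\sigma_0\ge 0$. One can arrange it after shifting $H_1',H_2'$ by suitable constants (the constraint $h_1(x\cdot b)+h_2(x\cdot a)\ge 0$ forces $\essinf h_1+\essinf h_2\ge 0$ for the absolutely continuous densities, and the singular parts are automatically non-negative by the transversality below), but this should be said before you form $\theta_a,\theta_b\in[0,1]$.

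Second, your transversality argument in the mixed case is misstated: $\sigma_a^s=(H_1')^s(\di x\cdot b)$ is \emph{not} in general concentrated on a countable union of hyperplanes (think of $(H_1')^s$ a Cantor measure). The correct observation---which is precisely what the paper uses---is that $\sigma_a^s$ is concentrated on $\{x\cdot b\in N_1\}$ for an $\Lcal^1$-null set $N_1$, and since $a,b$ are linearly independent the measure $\sigma_b=H_2'(\di x\cdot a)$ has a Lebesgue marginal in the $b$-direction, hence $\sigma_b(\{x\cdot b\in N_1\})=0$. This gives $\sigma_a^s\perp\sigma_b$ (not merely $\sigma_a^s\perp\sigma_b^s$), and then mutual absolute continuity forces $\sigma_a^s=0$, as you want.
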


\begin{proof}[Sketch of the proof]
The idea of the proof is to start with  a  tangent map \(w\) as in Theorem~\ref{thm:BD_tangent_decomp} and to take a further blow-up in order to end up in the above situation and to appeal to a theorem of Preiss  that \emph{tangent measures to tangent measures are tangent measures}, see~\cite[Theorem~14.16]{Mattila95book}. One needs to distinguish two  cases:

In the case where \(H_1'(x\cdot b)\) and \(H_2'(x\cdot a)\) do not have singular parts, one simply takes a Lebesgue point of both of them and blows up around that point. Thus one finds an affine tangent map.
In the case where \(H_1'(x\cdot b)\) has a singular part, one easily checks that \(D^s H_1(x\cdot a)\) is  singular with respect to  \(H_2'(x\cdot a)\) and hence, taking a suitable blow-up, one again ends up with a \(w\) of the desired form.

We refer to~\cite[Lemma~2.14]{DePhilippisRindler17} for the details.
\end{proof}

Note that in the above theorem one cannot decide \emph{a-priori} which of the two directions \(a,b\) will appear  in the second blow-up. Furthermore, it can happen that the roles of \(a\) and \(b\) differ depending on the blow-up sequence.  In view of the analogy with the rectifiable part, where only one-directional measures are seen as possible tangent measures, one might formulate the following conjecture:

\begin{conjecture}\label{conj:strcuture}
For \(\abs{E^su}\)-almost all \(x_0\), the conclusion of Theorem~\ref{thm:very_good_blowups} holds for \emph{every} tangent measure \(\sigma\in \Tan(E^su,x_0)\).
\end{conjecture}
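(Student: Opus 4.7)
The plan is to strengthen Theorem~\ref{thm:very_good_blowups} from the \emph{existence} of a one-directional blow-up to the \emph{uniqueness} of the directional type for every tangent measure. I would fix a $\abs{E^s u}$-generic point $x_0$ at which all of (i)--(iv) from the proof sketch of Theorem~\ref{thm:AlbertiBD} and the conclusion of Theorem~\ref{thm:BD_tangent_decomp} hold simultaneously, and write $\frac{\di E^s u}{\di \abs{E^s u}}(x_0) = a\odot b$. For an arbitrary $\sigma\in \Tan(\abs{E^s u},x_0)$, Theorem~\ref{thm:BD_tangent_decomp} already produces a BD map $w$ with $Ew = (a\odot b)\sigma$; in case $a=\pm b$ the conjecture holds, so it suffices to treat the case $a \neq \pm b$, in which
\[
  w(x) = a H_1(x\cdot b) + b H_2(x\cdot a), \qquad \sigma = H_1'(\di x \cdot b) + H_2'(\di x \cdot a).
\]
The conjecture then amounts to showing that, generically in $x_0$, either $H_1'$ or $H_2'$ is a linear polynomial times Lebesgue measure, so that it can be absorbed into the affine correction $A$.

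The natural line of attack is a second-order blow-up in the spirit of the proof of Theorem~\ref{thm:very_good_blowups}, combined with Preiss' theorem that tangent measures to tangent measures of $\abs{E^s u}$ remain in $\Tan(\abs{E^s u},x_0)$. First, assume for contradiction that on a Borel set of positive $\abs{E^s u}$-mass both $H_1'$ and $H_2'$ are genuinely non-affine. Since the two foliations $\{x\cdot b = \mathrm{const}\}$ and $\{x\cdot a = \mathrm{const}\}$ are transverse (because $a\neq \pm b$), at $\abs{\sigma}$-almost every point the two summands are mutually singular in a quantitative way. Taking second blow-ups at a Lebesgue point of the singular part of $H_1'$ should give an iterated tangent $\tilde\sigma\in \Tan(\abs{E^s u},x_0)$ which is purely $b$-directional, while repeating the procedure at a Lebesgue point of $H_2'$ yields an $a$-directional iterated tangent. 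The objective is to upgrade these existence statements to the conclusion that $\sigma$ itself is already one-directional, presumably via a rigidity statement for the class of $(a\odot b)$-polar tangent measures at $x_0$.

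The main obstacle, and the reason the conjecture remains open, is precisely this upgrade. The $\Curl\Curl$-free constraint is essentially exhausted once the polar $a\odot b$ has been identified (Theorem~\ref{thm:AlbertiBD}), so any further rigidity for $\sigma$ must come from a new mechanism. One plausible route is to revisit the Fourier-multiplier argument underlying Theorem~\ref{thm:general rigidity}(iii) together with the elliptic regularization step at the end of the proof sketch of Theorem~\ref{thm:AlbertiBD}, whose success crucially hinges on the positivity of $\sigma$; one would need a finer quantitative Calder\'on--Zygmund-type estimate that distinguishes pure one-directional concentration from the transverse superposition $H_1'(\di x\cdot b) + H_2'(\di x \cdot a)$. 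A second possibility is to look for an analogue of the slicing-based rigidity arguments behind Corollary~\ref{cor:Alberti_1D} in $\BV$, but the absence of a coarea formula in $\BD$ and the non-orthogonality of $a$ and $b$ make such an extension genuinely delicate. I expect that resolving Conjecture~\ref{conj:strcuture} in full will require ingredients beyond those in~\cite{DePhilippisRindler16} and~\cite{ArroyoRabasaDePhilippisHirschRindler19}, most likely a new \enquote{second-order} rigidity result tailored to the symbol geometry of $\Curl\Curl$.
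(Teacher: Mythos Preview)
The statement you are asked to prove is presented in the paper as an \emph{open conjecture}; the paper gives no proof, only the remark that it would align the Cantor part with the jump part structurally. Your proposal correctly recognizes this: you outline a plausible line of attack via Theorem~\ref{thm:BD_tangent_decomp} and iterated blow-ups, and you explicitly identify the gap, namely that the $\Curl\Curl$-constraint is exhausted once the polar is known to be $a\odot b$, so that ruling out genuinely two-directional tangents $H_1'(\di x\cdot b)+H_2'(\di x\cdot a)$ requires a new rigidity mechanism. That diagnosis matches the paper's own implicit assessment (it notes that one cannot decide a~priori which direction appears and that this may depend on the blow-up sequence). So there is nothing to compare; your write-up is an honest research plan rather than a proof, which is the appropriate response here.
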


Note that if verified, this statement would imply that the structure of the Cantor part (which can be thought of as containing ``infinitesimal'' discontinuities) is essentially the same as the jump part (which contains macroscopic discontinuities).

\subsection{Dimensionality and rectifiability} \label{ssc:dim}

In~\cite{ArroyoRabasaDePhilippisHirschRindler19} it was shown that the approach used to prove Theorem~\ref{thm:Afree} can be extended to recover some information about dimensionality and rectifiability of \(\Acal\)-free measures. Indeed, it turns out that if an \(\Acal\)-free measure \(\mu\) charges a ``low-dimensional'' set, then its polar vector \(\frac{\di\mu}{\di |\mu|}\) has to satisfy a strong constraint at $\abs{\mu}$-almost every point in this set. To state this properly, let us introduce the following family of cones:
\[
	\Lambda^h_\Acal:=\bigcap_{\pi \in \Gr(h,d)} \bigcup_{\substack{\xi \in \pi\setminus\{0\} }} \ker\mathbb A(\xi),  \qquad h = 1, \ldots, d,
\]
where \(\mathbb A(\xi)\) is defined in~\eqref{e:symbol} and  \(\Gr(h,d)\) is the Grassmannian of \(h\)-planes in \(\R^d\). Note that
\begin{equation*}
\Lambda^{1}_{\Acal}=\bigcap_{\xi\in \R^d\setminus \{0\}} \ker \mathbb A(\xi)\subset\Lambda_{\Acal}^j \subset\Lambda_{\Acal}^h\subset \Lambda_{\Acal}^d=\Lambda_{\Acal},\qquad 1\le j\le h\le d. 
\end{equation*}
We also recall the definition of the \(h\)-dimensional \emph{integral geometric measure}, see~\cite[Section~5.14]{Mattila95book},
\[
\Ical^h(E):=\int_{\mathrm{Gr}(h,d)} \int_{\pi}\Hcal^{0}(E\cap \mathrm{proj}_{\pi}^{-1}(x)) \dd \Hcal^h(x) \dd \gamma_{h,d}(\pi),
\]
where \(\gamma_{h,d}\) is the Haar measure on the Grassmannian. The main result of~\cite{ArroyoRabasaDePhilippisHirschRindler19} is the following, see~\cite[Theorem 1.3]{ArroyoRabasaDePhilippisHirschRindler19}:

\begin{theorem}[Dimensional restrictions on polar] \label{t:dimens}
	Let $\mu \in \Mcal(\Omega;\R^m)$ be $\Acal$-free and let  \(E \subset \R^d\) be  a Borel set with \(\mathcal I^{h}(E) = 0\) for some \(h\in \{1,\ldots,d\}\). Then,
\[
\frac{\di \mu}{\di |\mu|}(x) \in \Lambda^h_\Acal\qquad\mbox{for \(|\mu|\)-a.e.\ \(x\in E\)}.
\]
\end{theorem}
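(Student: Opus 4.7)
The plan is to adapt the blow-up plus Fourier-analytic strategy of Theorem~\ref{thm:Afree} to the partial-ellipticity encoded by $\Lambda^h_\Acal$. I would argue by contradiction: if the conclusion fails on a positive $|\mu|$-measure subset of $E$, then since $\Lambda^h_\Acal$ is a closed cone and the polar $P(\cdot) := \di\mu/\di|\mu|$ is $|\mu|$-measurable, Lusin's theorem lets me pass to a compact subset $K \subset E$ with $|\mu|(K) > 0$, $\Ical^h(K) = 0$, and on which $P$ is continuous and takes values in a compact subset of $\R^m \setminus \Lambda^h_\Acal$.

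Next I would select a $|\mu|$-typical point $x_0 \in K$ such that: (i)~by Preiss' theorem there is a non-zero tangent $\sigma \in \Tan(|\mu|,x_0)$, so that $\tau := P(x_0)\sigma$ is an $\Acal$-free tangent to $\mu$ via~\eqref{eq:Tan_polar}; and (ii)~the $\Ical^h$-nullity of $K$ transfers to $\sigma$ in the sense that for Haar-a.e.\ $\pi \in \Gr(h,d)$ the pushforward $\nu_\pi := (\mathrm{proj}_\pi)_\# \sigma$ is concentrated on an $\Hcal^h$-null Borel subset of $\pi$. Because $P(x_0) \notin \Lambda^h_\Acal$, there exists $\pi_0 \in \Gr(h,d)$ with $\Abb(\xi)P(x_0) \neq 0$ for every $\xi \in \pi_0 \setminus \{0\}$; a Fubini-type coupling of the measurable selection $x_0 \mapsto \pi_0(x_0)$ on $K$ with the Haar measure on $\Gr(h,d)$ should let me arrange that this $\pi_0$ simultaneously satisfies the projection-nullity in~(ii).

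With $\pi_0$ fixed, I would Fourier-transform $\Acal(P(x_0)\sigma) = 0$ (after a smooth cut-off to make the measures finite) to obtain $\Abb(\xi)P(x_0)\hat\sigma(\xi) = 0$ as tempered distributions. The injectivity of $\xi \mapsto \Abb(\xi)P(x_0)$ on $\pi_0 \setminus \{0\}$ then forces $\hat\sigma \equiv 0$ on $\pi_0 \setminus \{0\}$; the Fourier slicing identity $\hat{\nu}_{\pi_0} = \hat\sigma|_{\pi_0}$ gives $\supp \hat{\nu}_{\pi_0} \subseteq \{0\}$, so $\nu_{\pi_0}$ is a polynomial multiple of $\Hcal^h \restrict \pi_0$. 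But by~(ii), $\nu_{\pi_0}$ is singular with respect to $\Hcal^h \restrict \pi_0$, hence $\nu_{\pi_0} = 0$; since $\sigma \geq 0$ this forces $\sigma = 0$, contradicting the non-triviality of $\sigma$.

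The main obstacle will be the coupling step in the second paragraph: matching the $h$-plane $\pi_0$ witnessing $P(x_0) \notin \Lambda^h_\Acal$ to the Haar-generic planes for which $\Ical^h$-nullity of $K$ transfers to projection-singularity of the tangent measure $\sigma$. This requires Marstrand/Besicovitch-type projection results for tangent measures at points of $\Ical^h$-null sets, combined with a careful Fubini between measurable selection on $K$ and the Haar measure on $\Gr(h,d)$. A further technical point is justifying the Fourier slicing identity for the merely locally finite tangent measure $\sigma$, which forces one to work with cut-off versions and to pass to the limit carefully in the multiplier argument.
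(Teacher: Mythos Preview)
Your overall architecture (blow-up to a tangent with fixed polar $P(x_0)$, then exploit partial ellipticity along an $h$-plane) is sound, and the ``coupling'' you flag as the main obstacle is actually the easy part: the set of $\pi\in\Gr(h,d)$ for which $\Abb(\xi)P(x_0)\neq 0$ for all $\xi\in\pi\setminus\{0\}$ is open (hence of positive Haar measure), so it meets the Haar-full set of planes with $\Hcal^h(\mathrm{proj}_\pi K)=0$.

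The genuine gap is your step~(ii): you assert that the \emph{tangent measure} $\sigma$ itself has singular projections, i.e.\ that $(\mathrm{proj}_\pi)_\#\sigma\perp\Hcal^h\restrict\pi$ for Haar-a.e.\ $\pi$. This does not follow from $\Ical^h(K)=0$, and is in general false. As the paper explicitly warns in its proof sketch of Theorem~\ref{thm:Afree} (the case $h=d$), purely singular measures can have \emph{only} Lebesgue-absolutely-continuous tangents at almost every point (Preiss' example). So nothing prevents $\sigma$ from being, say, a multiple of $\Lcal^d$, in which case all its projections are absolutely continuous and your contradiction evaporates. Your Fourier step would then correctly conclude that $\nu_{\pi_0}$ is a polynomial density times $\Hcal^h\restrict\pi_0$, but this is no contradiction at all.

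The fix, following the blueprint of the $h=d$ argument in the paper (and the actual proof in~\cite{ArroyoRabasaDePhilippisHirschRindler19}), is to work not with the limit $\sigma$ but with the \emph{approximating sequence} $\sigma_k=c_kT^{x_0,r_k}_\#(|\mu|\restrict E)$. These are rescalings of a measure concentrated on $E$, so for the chosen plane $\pi$ the pushforwards $(\mathrm{proj}_\pi)_\#\sigma_k$ are genuinely singular (concentrated on rescalings of $\mathrm{proj}_\pi E$, which is $\Hcal^h$-null). The partial ellipticity of $\Abb(\,\cdot\,)P(x_0)$ along $\pi$ then yields, via a Calder\'on--Zygmund argument analogous to the one sketched in Section~\ref{sec:singularities}, \emph{strong} $\Lrm^1$-precompactness of $(\mathrm{proj}_\pi)_\#(\varphi\sigma_k)$ in $\Lrm^1(\pi)$. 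Strong $\Lrm^1$-convergence of singular measures to a non-zero absolutely continuous limit is the contradiction. Your distributional Fourier computation $\Abb(\xi)P(x_0)\hat\sigma(\xi)=0$ is too clean precisely because it ignores the cutoff errors; handling those errors quantitatively at the level of $\sigma_k$ is where the real work (and the real contradiction) lives.
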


Note that for \(h=d\) this theorem coincides with Theorem~\ref{thm:Afree}. The following is a straightforward corollary, see~\cite[Corollary 1.4]{ArroyoRabasaDePhilippisHirschRindler19}:

\begin{corollary}[Dimensionality]\label{cor:aac}
Let \(\Acal\) and \(\mu\) be as in Theorem~\ref{t:dimens} and assume that \(\Lambda^h_{\Acal}=\{0\}\) for some \(h \in \{1,\ldots,d\}\).
Then,
\[
   \text{$E \subset \R^d$ Borel with $\Ical^{h}(E)=0$}  \quad\Longrightarrow\quad
   |\mu|(E)=0.
\]
In particular,
\[
  \mu \ll \Ical^h\ll\Hcal^h
\]
and thus 
\begin{equation*}\label{e:dim}
\dim_{\Hcal} \mu:=\sup \, \setb{ h > 0 }{ \mu\ll\Hcal^h} \ge h_\Acal,
\end{equation*}
where
\begin{equation*}\label{ell1}
  h_\Acal : =\max\setb{ h \in \{1,\ldots,d\}} { \Lambda_{\Acal}^h=\{0\}}.
\end{equation*}
\end{corollary}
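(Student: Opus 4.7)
The plan is to derive the corollary directly from Theorem~\ref{t:dimens}, with the only additional ingredients being standard facts about the polar vector and the comparability of $\Ical^h$ with $\Hcal^h$. The conceptual content is entirely packaged in the dimensional restriction on the polar; the rest is bookkeeping.

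First, I would fix $h$ with $\Lambda_\Acal^h = \{0\}$ and let $E \subset \R^d$ be Borel with $\Ical^h(E) = 0$. Applying Theorem~\ref{t:dimens} directly gives
\[
  \frac{\di \mu}{\di |\mu|}(x) \in \Lambda_\Acal^h = \{0\} \qquad\text{for $|\mu|$-a.e.\ $x \in E$.}
\]
On the other hand, by the Besicovitch differentiation theorem the polar $\frac{\di \mu}{\di |\mu|}$ has unit norm $|\mu|$-almost everywhere, which is incompatible with its vanishing on a set of positive $|\mu|$-measure. Hence $|\mu|(E) = 0$, proving the first implication of the statement and, equivalently, $\mu \ll \Ical^h$.

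To chain this with Hausdorff measure, I would invoke the standard comparison $\Ical^h(E) \leq C(h,d) \, \Hcal^h(E)$ valid for all Borel sets (see~\cite[Section~5.14]{Mattila95book} and~\cite[Theorem~2.10.15]{Federer69book}), which immediately yields $\Ical^h \ll \Hcal^h$. Composing the two absolute continuities gives $\mu \ll \Ical^h \ll \Hcal^h$.

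Finally, to conclude $\dim_\Hcal \mu \geq h_\Acal$, I would use the monotonicity $\Lambda_\Acal^j \subset \Lambda_\Acal^h$ for $j \leq h$ already recorded in the text: the hypothesis $\Lambda_\Acal^{h_\Acal} = \{0\}$ is the strongest triviality of the wave cone hierarchy available, so applying the previous step with $h = h_\Acal$ gives $\mu \ll \Hcal^{h_\Acal}$, and hence $h_\Acal$ belongs to the set over which the supremum defining $\dim_\Hcal \mu$ is taken. The only point requiring a moment of care is the direction of the inequality between $\Ical^h$ and $\Hcal^h$: one needs $\Ical^h \leq C \Hcal^h$ (not the reverse) so that $\Hcal^h$-null sets are $\Ical^h$-null, and this is precisely the content of the classical comparison result. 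No serious obstacle is expected.
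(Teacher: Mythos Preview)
Your argument is correct and is precisely the ``straightforward'' derivation the paper intends (the paper states the corollary without proof, merely citing~\cite[Corollary~1.4]{ArroyoRabasaDePhilippisHirschRindler19}). The only substantive step is the contradiction between $\frac{\di\mu}{\di|\mu|}(x)\in\{0\}$ and $\bigl|\frac{\di\mu}{\di|\mu|}(x)\bigr|=1$ for $|\mu|$-a.e.\ $x$, together with the classical inequality $\Ical^h\leq C\,\Hcal^h$, both of which you have identified correctly.
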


By combining the above corollary with the Besicovitch--Federer rectifiability criterion, see~\cite[Section 3.3.13]{Federer69book}, one obtains that  for an \(\Acal\)-free measure its $h$-dimensional parts are rectifiable whenever \(\Lambda^{h}_{\Acal}=\{0\}\). Recall that for a positive measure \(\sigma\) its \term{\(h\)-dimensional upper density} at a point $x$ is defined as 
	\[
		\theta^*_h(\sigma)(x) := \limsup_{r \to 0} \frac{\sigma(B_r(x))}{(2r)^h}.
	\]  
We then have, see~\cite[Theorem 1.5]{ArroyoRabasaDePhilippisHirschRindler19}:

\begin{theorem}[Rectifiability]\label{t:rect}
Let \(\Acal\) and \(\mu\) be as in Theorem~\ref{t:dimens} and assume that \(\Lambda^{h}_{\Acal}=\{0\}\). Then, the set $\{\theta^*_h(|\mu|) = +\infty\}$ is $|\mu|$-negligible and \(\mu\restrict \{\theta^{*}_{h}(|\mu|)>0\}\) is concentrated on an \(h\)-rectifiable set \(R\), that is,
\[
\mu \restrict \{\theta^{*}_{h}(|\mu|)>0\}=\lambda\,\Hcal^h\restrict R,
\]
where \(\lambda \colon R \to \mathbb \R^{m}\) is $\Hcal^h$-measurable.
\end{theorem}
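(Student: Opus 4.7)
The plan is to combine the absolute continuity $\mu \ll \Hcal^h$ from Corollary~\ref{cor:aac} with the Besicovitch--Federer projection theorem and Theorem~\ref{t:dimens} in order to suppress the purely unrectifiable part of $\mu$ restricted to $\{\theta^*_h(|\mu|)>0\}$.

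First, I would verify that $|\mu|(\{\theta^*_h(|\mu|)=+\infty\})=0$. Since $\mu\ll\Hcal^h$, it suffices to prove that $\Hcal^h$ vanishes on $\{\theta^*_h(|\mu|)=+\infty\}\cap K$ for every compact $K\subset\Omega$. This is a standard density estimate: for any $t>0$ and any Borel set $A\subset\{\theta^*_h(|\mu|)\geq t\}$ one has $c_h\, t\, \Hcal^h(A)\leq |\mu|(A)$ for a dimensional constant $c_h>0$ (see, e.g.,~\cite[Theorem~2.56]{AmbrosioFuscoPallara00book}); letting $t\to+\infty$ against the local finiteness of $|\mu|$ then forces $\Hcal^h(A)=0$ on $A=K\cap\{\theta^*_h(|\mu|)=+\infty\}$. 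The same estimate, applied at the levels $t=1/n$, shows $\Hcal^h\restrict\{0<\theta^*_h(|\mu|)<+\infty\}$ is $\sigma$-finite.

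Next, set $E:=\{0<\theta^*_h(|\mu|)<+\infty\}$ and fix the canonical Borel decomposition $E=R\cup U$ with $R$ an $h$-rectifiable set and $U$ purely $h$-unrectifiable (valid on sets of $\sigma$-finite $\Hcal^h$-measure; see~\cite[Theorem~15.6]{Mattila95book}). The goal is to show $|\mu|(U)=0$. Exhausting $U$ by pieces of finite $\Hcal^h$-measure, the Besicovitch--Federer projection theorem~\cite[Section~3.3.13]{Federer69book} yields $\Hcal^h(\mathrm{proj}_\pi(U))=0$ for $\gamma_{h,d}$-a.e.\ $\pi\in\Gr(h,d)$. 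Since the fiber $U\cap\mathrm{proj}_\pi^{-1}(x)$ can be nonempty only when $x\in\mathrm{proj}_\pi(U)$, the definition of $\Ical^h$ immediately gives $\Ical^h(U)=0$. Applying Theorem~\ref{t:dimens} to the Borel set $U$ then forces
\[
\frac{\di\mu}{\di|\mu|}(x)\in\Lambda^h_\Acal=\{0\}\qquad\text{for $|\mu|$-a.e.\ $x\in U$,}
\]
which is incompatible with the polar vector having unit length $|\mu|$-almost everywhere; hence $|\mu|(U)=0$.

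Combining the two steps, $\mu\restrict\{\theta^*_h(|\mu|)>0\}$ is concentrated on the $h$-rectifiable set $R$. Since $\mu\ll\Hcal^h$ and $\Hcal^h\restrict R$ is $\sigma$-finite, the Radon--Nikodym theorem yields an $\Hcal^h$-measurable density $\lambda\colon R\to\R^m$ with $\mu\restrict\{\theta^*_h(|\mu|)>0\}=\lambda\,\Hcal^h\restrict R$. I expect the main technical obstacle to be the reduction of the Besicovitch--Federer projection theorem to the $\sigma$-finite setting via exhaustion, since its classical formulation is restricted to sets of finite $\Hcal^h$-measure; the crucial ingredient making this exhaustion possible is precisely the quantitative upper-density estimate established in the first step.
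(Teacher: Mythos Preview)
Your argument is correct and follows precisely the route the paper indicates: it combines the absolute continuity $\mu\ll\Ical^h\ll\Hcal^h$ from Corollary~\ref{cor:aac} with the Besicovitch--Federer projection theorem (via Theorem~\ref{t:dimens}) to kill the purely unrectifiable part. The paper gives only the one-line sketch ``combine the above corollary with the Besicovitch--Federer rectifiability criterion'' and defers the details to~\cite{ArroyoRabasaDePhilippisHirschRindler19}; your write-up is a faithful expansion of exactly this strategy, including the standard density estimate needed to handle $\{\theta^*_h(|\mu|)=+\infty\}$ and to ensure $\sigma$-finiteness.
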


The above results also imply a new proof of the rectifiability of the \((d-1)\)-dimensional part of derivatives of BV-maps and of symmetrized derivatives of BD-maps. Indeed, it suffices to notice that, by direct computations,
\[
\Lambda_{\curl}^{d-1}=\{0\}, \qquad \Lambda_{\Curl\Curl}^{d-1}=\{0\}.
\]
This recovers item~(ii) in Theorems~\ref{thm:Bvstruct} and~\ref{thm:BdstructIntro}. We refer the reader to~\cite{ArroyoRabasaDePhilippisHirschRindler19} for a more detailed discussion.

\section{Integral functionals and Young measures}\label{sec:app}

In this section we consider integral functionals of the form
\begin{equation} \label{eq:FLD}
  \Fcal[u] := \int_\Omega f(x,\Ecal u(x)) \dd x,  \qquad u \in \LD(\Omega),
\end{equation}
where $\Omega$ is a bounded Lipschitz domain, $f \colon \Omega \times \Rdds \to [0,\infty)$ is a Carath\'{e}odory integrand (Lebesgue measurable in the first argument and continuous in the second argument) with \term{linear growth at infinity}, that is,
\[
  f(x,A) \leq C(1+\abs{A})  \qquad\text{for some $C>0$ and all $A \in \Rdds$,}
\]
and the subspace $\LD(\Omega)$ of $\BD(\Omega)$ consists of all BD-maps such that $Eu$ is absolutely continuous with respect to Lebesgue measure (i.e., $E^s u = 0$).

Recall that a sequence \((u_j)\) is said to  \term{weak*-converge} to \(u\) in \(\BD(\Omega)\),  in symbols $u_j \toweakstar u$, if  $u_j \to u$ strongly in $\Lrm^1(\Omega;\R^d)$ and $Eu_j \toweakstar Eu$ in $\Mcal(\Omega;\R_{\rm sym}^{d\times d})$. Moreover, $(u_j)$ converges \term{strictly} or \term{area-strictly} to $u$ if $u_j \toweakstar u$ in $\BD(\Omega)$ and additionally $\abs{Eu_j}(\Omega) \to \abs{Eu}(\Omega)$ or $\langle Eu_j \rangle(\Omega) \to \langle Eu \rangle(\Omega)$, respectively. Here, for $u \in \BD(\Omega)$, we define the \term{(reduced) area functional} $\langle Eu \rangle(\Omega)$ as
\[
  \langle Eu \rangle(\Omega) := \int_\Omega \sqrt{1+\abs{\Ecal u(x)}^2} \dd x + \abs{E^s u}(\Omega).
\]
Since $\LD(\Omega)$ is area-strictly dense in $\BD(\Omega)$ (by a mollification argument, see, e.g., Lemma~11.1 in~\cite{Rindler18book} for the corresponding argument for the density of $\Wrm^{1,1}(\Omega)$ in the space $\BV(\Omega)$), one can show the following result, whose proof is completely analogous to the BV-case; see, for instance, Theorem~11.2 in~\cite{Rindler18book}.

\begin{proposition} \label{prop:F_strictly_cont_ext_BD}
Let $f \colon \cl{\Omega} \times \Rdds \to [0,\infty)$ be continuous and such that the  \term{(strong) recession function}
\begin{equation} \label{eq:f_infty}
  f^\infty(x,A) := \lim_{\substack{\!\!\!\! x' \to x \\ \!\!\!\! A' \to A \\ \; t \to \infty}}
    \frac{f(x',tA')}{t},  \qquad x \in \cl{\Omega}, \, A \in \Rdds,
\end{equation}
exists. Then, the area-strictly continuous extension of the functional $\Fcal$ defined in~\eqref{eq:FLD} onto the space $\BD(\Omega)$ is
\begin{align*}
  \overline{\Fcal}[u] := \int_\Omega f \bigl( x, \Ecal u(x) \bigr) \dd x
  + \int_\Omega f^\infty \biggl( x, \frac{\di E^s u}{\di |E^s u|}(x) \biggr) \dd |E^s u|(x),  \qquad u \in \BD(\Omega).
\end{align*}
\end{proposition}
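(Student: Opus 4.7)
The plan is to reduce this to Reshetnyak's continuity theorem for $1$-homogeneous integrands by encoding both the Lebesgue part and the singular part of $Eu$ inside one vector-valued Radon measure whose total variation is exactly the reduced area functional. Set $\tilde{\mu}_u := (Eu, \Lcal^d \restrict \Omega) \in \Mcal(\Omega;\Rdds \times \R)$. A direct computation from the Lebesgue--Radon--Nikod\'{y}m decomposition $Eu = \Ecal u \, \Lcal^d + E^s u$ gives
\[
  \absb{\tilde{\mu}_u}(\Omega) = \int_\Omega \sqrt{1 + \abs{\Ecal u(x)}^2} \dd x + \abs{E^s u}(\Omega) = \langle Eu \rangle(\Omega),
\]
so area-strict convergence $u_j \toweakstar u$ in $\BD(\Omega)$ is equivalent to classical strict convergence $\tilde{\mu}_{u_j} \to \tilde{\mu}_u$ in $\Mcal(\Omega;\Rdds \times \R)$ (weak* convergence of $\tilde{\mu}_{u_j}$ to $\tilde{\mu}_u$ follows from $u_j \to u$ in $\Lrm^1$ and $Eu_j \toweakstar Eu$).

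Next I would extend the integrand $f$ to a positively $1$-homogeneous function on $\cl{\Omega} \times \Rdds \times [0,\infty)$ by the standard \enquote{perspective} construction:
\[
  \tilde{f}(x,A,t) := \begin{cases} t f(x, A/t) & \text{if } t > 0, \\ f^\infty(x,A) & \text{if } t = 0. \end{cases}
\]
Thanks to the assumption that the recession function~\eqref{eq:f_infty} exists as a \emph{strong} limit, $\tilde f$ is continuous on $\cl \Omega \times \Rdds\times [0,\infty)$ (the only issue is continuity across $t=0$, and this follows directly from the definition of $f^\infty$ combined with $1$-homogeneity), and by construction $\tilde f$ is positively $1$-homogeneous in $(A,t)$. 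Moreover, the linear growth of $f$ implies $\tilde{f}$ is bounded on $\cl{\Omega} \times \Sbb^{d \times d}_\sym \times [0,\infty)$ intersected with the unit sphere of $\Rdds \times \R$.

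Using the $1$-homogeneity of $\tilde f$ and the decomposition of $\tilde{\mu}_u$, whose polar on the absolutely continuous part equals $(\Ecal u,1)/\sqrt{1+\abs{\Ecal u}^2}$ with total-variation density $\sqrt{1+\abs{\Ecal u}^2}$, and on the singular part equals $(\di E^s u / \di \abs{E^s u}, 0)$ with total variation $\abs{E^s u}$, I would verify by a direct rewriting that
\[
  \overline{\Fcal}[u] = \int_\Omega \tilde{f}\biggl( x, \frac{\di \tilde{\mu}_u}{\di \abs{\tilde{\mu}_u}}(x) \biggr) \dd \abs{\tilde{\mu}_u}(x).
\]
Reshetnyak's continuity theorem (see, e.g., Theorem~10.3 in Rindler's book) then yields that the right-hand side is strictly continuous along $\tilde{\mu}_{u_j} \to \tilde{\mu}_u$, hence $\overline{\Fcal}$ is area-strictly continuous on $\BD(\Omega)$. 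Since $E^s u = 0$ for $u \in \LD(\Omega)$ (so $\overline{\Fcal}$ agrees with $\Fcal$ there), and since $\LD(\Omega)$ is area-strictly dense in $\BD(\Omega)$, uniqueness of the extension forces it to be $\overline{\Fcal}$.

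The main (mild) obstacle is checking the joint continuity of $\tilde{f}$ across the degenerate stratum $\{t=0\}$: this is where the \emph{strong} recession assumption~\eqref{eq:f_infty} (taking simultaneous limits in all three variables) is essential rather than the weaker pointwise definition. Once $\tilde f$ is known to be continuous and positively $1$-homogeneous, the rest is a bookkeeping exercise, and the argument is entirely analogous to the corresponding one for $\BV$ recalled in the text.
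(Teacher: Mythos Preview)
Your proposal is correct and is precisely the standard argument the paper alludes to when it says the proof is ``completely analogous to the BV-case'' and cites Theorem~11.2 in~\cite{Rindler18book}: the perspective-function trick combined with Reshetnyak's continuity theorem applied to the augmented measure $(Eu,\Lcal^d\restrict\Omega)$. There is nothing to add.
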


Note that, clearly, $f^\infty$ is positively $1$-homogeneous in $A$, that is $f^\infty(x,\alpha A) = \alpha f^\infty(x,A)$ for all $\alpha \geq 0$. Moreover, the existence of $f^\infty$ entails that $f$ has linear growth at infinity.

While the above result gives a way to extend $\Fcal$ to all of $\BD(\Omega)$, at least for some integrands, in general neither $\mathcal{F}$ nor $\overline{\Fcal}$ admit a minimizer. Usually, this occurs if $\overline{\Fcal}$ is not weakly* lower semicontinuous. In this situation we define the \term{relaxation} $\Fcal_{*}$ of $\mathcal{F}$ onto $\BD(\Omega)$ as
\[
\Fcal_{*}[u,\Omega]:=\setBB{\liminf_{j\to\infty} \Fcal[u_j,\Omega] }{ \text{$(u_j)\subset\LD(\Omega)$, $u_j \toweakstar u$ in $\BD(\Omega)$} }.
\]
Our first task is to identify $\Fcal_{*}$ as an integral functional, which will entail a suitable (generalized) convexification of the integrand.

\subsection{Symmetric-quasiconvexity}

The appropriate generalized convexity notion related to symmetrized gradients is the following: We call a bounded Borel function $f \colon \R_\sym^{d \times d} \to \R$ \term{symmetric-quasiconvex} if
\[
  f(A) \leq \dashint_D f(A + \Ecal \psi(y)) \dd y
  \qquad \text{for all $\psi \in \Wrm^{1,\infty}_0(D;\R^d)$ and all $A \in \R_\sym^{d \times d}$,}
\]
where $D \subset \R^d$ is any bounded Lipschitz domain (the definition is independent of the choice of $D$ by a covering argument). Similar assertions to the ones for quasiconvex functions hold, cf.~\cite{Ebobisse00,BarrosoFonsecaToader00}. In particular, if $f$ has linear growth at infinity, we may replace the space $\Wrm_0^{1,\infty}(D;\R^d)$ in the above formula by $\Wrm_0^{1,1}(D;\R^d)$ or $\LD_0(D)$ ($\LD$-functions with zero boundary values in the sense of trace), see Remark~3.2 in~\cite{BarrosoFonsecaToader00}.

Using one-directional oscillations, one can prove that if the function $f \colon \R_\sym^{d \times d} \to \R$ is symmetric-quasiconvex, then it holds that
\begin{equation} \label{eq:sym_qc_aodotb_conv}
  f( \theta A + (1-\theta) B ) \leq \theta f(A) + (1-\theta) f(B)
\end{equation}
whenever $A,B \in \R_\sym^{d \times d}$ with $B - A = a \odot b$ for some $a,b \in \R^d$ and $\theta \in [0,1]$, cf.\ Proposition~3.4 in~\cite{FonsecaMuller99} for a more general statement in the framework of $\Acal$-quasiconvexity.

If we consider $\R_\sym^{d \times d}$ to be identified with $\R^{d(d+1)/2}$ and $f \colon \R_\sym^{d \times d} \to \R$ with $\tilde{f} \colon \R^{d(d+1)/2} \to \R$, then the convexity in~\eqref{eq:sym_qc_aodotb_conv} implies that $\tilde{f}$ is separately convex (i.e., convex in every entry separately) and so, $f$ is locally Lipschitz continuous, see for example Lemma~2.2 in~\cite{BallKirchheimKristensen00}. If additionally $f$ has linear growth at infinity, then \emph{loc.~cit.} even implies that $f$ is globally Lipschitz continuous. 

Notice that from Fatou's lemma we get that the recession function $f^\infty$, if it exists, is symmetric-quasiconvex whenever $f$ is; this is completely analogous to the situation for ordinary quasiconvexity. Hence, $f^\infty$ is also continuous on $\R_\sym^{d \times d}$ in this situation.

We mention that non-convex symmetric-quasiconvex functions with linear growth at infinity exist. One way to construct such a function (abstractly) is the following: We define the \term{symmetric-quasiconvex envelope} $SQf \colon \Rdds \to \R \cup \{-\infty\}$ of a locally bounded Borel-function $f \colon \Rdds \to \R$ as
\begin{equation} \label{eq:SQf}
  SQf(A) := \inf \, \setBB{ \dashint_{B_1} f(A + \Ecal \psi(z)) \dd z }
  { \psi \in \Wrm_0^{1,\infty}(B_1;\R^d) },
\end{equation}
where $A \in \Rdds$. Clearly, $SQf \leq f$. Furthermore, if $f$ has $p$-growth, we may replace the space $\Wrm_0^{1,\infty}(B_1;\R^d)$ by $\Wrm_0^{1,p}(B_1;\R^d)$ via a density argument.

Just as for the classical quasi-convexity  one can show the following, cf.~\cite[Proposition~3.4]{FonsecaMuller99}:

\begin{lemma} \label{lem:Qh_QC}
For a continuous function $f \colon \Rdds \to [0,\infty)$ with $p$-growth, $p \in [1,\infty)$, the symmetric-quasiconvex envelope $SQf$ is symmetric-quasiconvex.
\end{lemma}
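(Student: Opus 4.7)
\medskip
\noindent\textit{Plan of proof.} Fix $A_0 \in \Rdds$ and a test map $\psi \in \Wrm^{1,\infty}_0(B_1;\R^d)$. The goal is to show
\[
  SQf(A_0) \le \dashint_{B_1} SQf(A_0 + \Ecal\psi(z)) \dd z,
\]
and the overall strategy is to patch together, via a Vitali-type covering of $B_1$, nearly optimal test maps coming from the defining infimum of $SQf$, thereby producing an admissible competitor $\Psi$ for $SQf(A_0)$ whose energy essentially matches the right-hand side. Two preliminary observations simplify everything. First, since $0 \le SQf \le f$, the envelope $SQf$ inherits the $p$-growth of $f$; in particular, by the density remark after~\eqref{eq:SQf} we may take competitors in $\Wrm^{1,p}_0$ rather than $\Wrm^{1,\infty}_0$. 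Second, as a pointwise infimum of the continuous maps $A \mapsto \dashint_{B_1} f(A + \Ecal\phi(y)) \dd y$ (one per competitor $\phi$, continuity being given by dominated convergence against the $p$-growth bound), $SQf$ is upper semi-continuous.

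\medskip
\noindent\textit{Main construction for piecewise affine $\psi$.} First assume $\psi$ is piecewise affine, so $B_1 = \bigcup_{j=1}^N \Omega_j$ (up to null sets) with $\Ecal\psi \equiv B_j$ on $\Omega_j$ for suitable $B_j \in \Rdds$. Fix $\varepsilon > 0$ and, for each $j$, pick $\phi^{(j)} \in \Wrm^{1,p}_0(B_1;\R^d)$ with
\[
  \dashint_{B_1} f(A_0 + B_j + \Ecal\phi^{(j)}(y)) \dd y \le SQf(A_0 + B_j) + \varepsilon.
\]
By the Vitali covering theorem, for each $j$ there exist countably many disjoint balls $B_{r_k^{(j)}}(z_k^{(j)}) \subset \Omega_j$ exhausting $\Omega_j$ up to a Lebesgue-null set. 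Set
\[
  \Psi(x) := \psi(x) + \sum_{j,k} r_k^{(j)} \phi^{(j)}\!\Bigl(\tfrac{x - z_k^{(j)}}{r_k^{(j)}}\Bigr) \chi_{B_{r_k^{(j)}}(z_k^{(j)})}(x).
\]
Because each rescaled piece vanishes on the boundary of its ball, $\Psi - \psi \in \Wrm^{1,p}_0(B_1;\R^d)$, hence $\Psi \in \Wrm^{1,p}_0(B_1;\R^d)$. On each $B_{r_k^{(j)}}(z_k^{(j)})$ we have $\Ecal\psi \equiv B_j$, so a change of variables gives
\[
  \int_{B_{r_k^{(j)}}(z_k^{(j)})} f(A_0 + \Ecal\Psi(x)) \dd x
  = (r_k^{(j)})^d \int_{B_1} f(A_0 + B_j + \Ecal\phi^{(j)}(y)) \dd y
  \le |B_{r_k^{(j)}}(z_k^{(j)})| \bigl(SQf(A_0 + B_j) + \varepsilon\bigr).
\]
Summing over $j,k$ and dividing by $|B_1|$ turns the right-hand side into $\dashint_{B_1} SQf(A_0 + \Ecal\psi(z)) \dd z + \varepsilon$. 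Using $\Psi$ as a competitor in the definition of $SQf(A_0)$ and letting $\varepsilon \to 0$ yields the claim for piecewise affine $\psi$.

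\medskip
\noindent\textit{General $\psi$ and main obstacle.} For arbitrary $\psi \in \Wrm^{1,\infty}_0(B_1;\R^d)$, select a sequence of piecewise affine $\psi_n \in \Wrm^{1,\infty}_0(B_1;\R^d)$ with $\psi_n \to \psi$ in $\Wrm^{1,p}(B_1;\R^d)$ and $\|\nabla\psi_n\|_\infty$ uniformly bounded (standard P1-interpolation on refining triangulations, combined with a cutoff near $\partial B_1$). Pass to a subsequence so that $\Ecal\psi_n \to \Ecal\psi$ a.e. The upper semi-continuity of $SQf$ gives $\limsup_n SQf(A_0 + \Ecal\psi_n(z)) \le SQf(A_0 + \Ecal\psi(z))$ for a.e.\ $z$, and the uniform $L^\infty$ bound on $\Ecal\psi_n$ combined with $p$-growth provides a uniform pointwise upper bound on $SQf(A_0 + \Ecal\psi_n)$; reverse Fatou then yields
\[
  \limsup_{n\to\infty} \dashint_{B_1} SQf(A_0 + \Ecal\psi_n(z)) \dd z \le \dashint_{B_1} SQf(A_0 + \Ecal\psi(z)) \dd z,
\]
and passing to the limit in the piecewise-affine inequality closes the argument. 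The main technical obstacle is precisely this limit step: at this stage we have only upper semi-continuity of $SQf$ (full continuity would follow \emph{a posteriori} from symmetric-quasiconvexity via the separate-convexity argument recalled in the text, but invoking it here would be circular). The piecewise-affine reduction in the previous paragraph is what lets us avoid the alternative and much more delicate route of constructing, by a measurable-selection theorem, a family $z \mapsto \phi_z$ nearly realizing $SQf(A_0 + \Ecal\psi(z))$ pointwise a.e.
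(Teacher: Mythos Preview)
Your proof is correct. The paper does not prove this lemma itself but merely cites~\cite[Proposition~3.4]{FonsecaMuller99}, which treats the general $\Acal$-quasiconvex setting via periodic test functions on the torus. Your argument instead follows the classical Dacorogna route directly with the $\Wrm^{1,\infty}_0$ definition: Vitali-patch near-optimal competitors on the constancy regions of $\Ecal\psi$, then pass from piecewise-affine to general $\psi$ by approximation. This is more elementary and self-contained. Your handling of the limit step---upper semicontinuity of $SQf$ (as a pointwise infimum of continuous functions) combined with reverse Fatou against the uniform bound from $p$-growth and $\sup_n\norm{\Ecal\psi_n}_\infty < \infty$---correctly avoids the circularity of assuming continuity of $SQf$ beforehand. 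One minor quibble: piecewise-affine maps with exact zero trace are awkward on a ball; it is cleanest to invoke the domain-independence of symmetric-quasiconvexity (noted in the paper) and carry out the piecewise-affine step on a simplex or cube, where P1 interpolation with zero boundary data is standard.
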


We then have the following class of symmetric-quasiconvex, but not convex, functions:

\begin{lemma} \label{lem:sym_qc_construction}
Let $F \in \R_\sym^{d \times d}$ be a matrix that cannot be written in the form $a \odot b$ for any $a,b \in \R^d$  and let $p \in [1,\infty)$. Define
\[
  h(A) := \dist(A,\{-F,F\})^p,  \qquad A \in \R_\sym^{d \times d}.
\]
Then, $SQh(0) > 0$ and the symmetric-quasiconvex envelope $SQh$ is not convex (at zero).
\end{lemma}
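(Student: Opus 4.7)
The plan is to first observe that non-convexity of $SQh$ at the origin is an immediate consequence of $SQh(0) > 0$, and then to concentrate all effort on the latter, via an ellipticity/contradiction argument that leverages Theorem~\ref{thm:general rigidity}~(iii).

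First, I would record that $h(\pm F) = 0$ forces $SQh(\pm F) = 0$ (since $0 \le SQh \le h$), so if $SQh$ were convex at $0 = \tfrac{1}{2} F + \tfrac{1}{2}(-F)$ one would get $SQh(0) \le \tfrac{1}{2}(SQh(F) + SQh(-F)) = 0$; thus the non-convexity claim reduces entirely to proving $SQh(0) > 0$.

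For the core assertion $SQh(0) > 0$, I would argue by contradiction. Assume that some sequence $(\psi_j) \subset \Wrm^{1,\infty}_0(B_1;\R^d)$ satisfies $\int_{B_1} \dist(\Ecal \psi_j, \{\pm F\})^p \dd z \to 0$, and use a measurable selection to write
\[
\Ecal \psi_j = F s_j + e_j, \qquad s_j \colon B_1 \to \{-1, 1\} \ \text{Borel}, \qquad \|e_j\|_{\Lrm^p(B_1;\Rdds)} \to 0.
\]
The key observation is that the hypothesis ``$F \neq a \odot b$'' is exactly the ellipticity condition underpinning Theorem~\ref{thm:general rigidity}~(iii): letting $\Pbf$ denote the orthogonal projection of $\Rdds$ onto $\spn\{F\}^\perp$, one has $\Pbf(a \odot \xi) \neq 0$ for every nonzero $a, \xi \in \R^d$, since otherwise $a \odot \xi \in \spn\{F\}$ would force $F$ itself to be a symmetric tensor product.

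I would then extend each $\psi_j$ by zero to $\R^d$, which preserves the decomposition since $\psi_j$ has vanishing trace. Then
\[
\Pbf(\Ecal \psi_j) = \Pbf(e_j) \to 0 \quad \text{in } \Lrm^p(\R^d;\spn\{F\}^\perp),
\]
and the Fourier inversion $\widehat{\Ecal \psi_j}(\xi) = \Mbf(\xi) \widehat{\Pbf(e_j)}(\xi)$ with the smooth, positively $0$-homogeneous multiplier $\Mbf$ constructed in the proof of Theorem~\ref{thm:general rigidity}~(iii), together with Mihlin's theorem, yield
\[
\|\Ecal \psi_j\|_{\Lrm^p(\R^d)} \le C \|\Pbf(e_j)\|_{\Lrm^p(\R^d)} \to 0.
\]
This is the desired contradiction, since
\[
|F| \, |B_1|^{1/p} = \|F s_j\|_{\Lrm^p(B_1)} \le \|\Ecal \psi_j\|_{\Lrm^p(B_1)} + \|e_j\|_{\Lrm^p(B_1)} \to 0,
\]
while the left-hand side is a strictly positive constant independent of $j$.

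The hardest part will be the endpoint $p = 1$, where Mihlin's multiplier theorem is unavailable. I expect this to require either a Lipschitz truncation of $(\psi_j)$ to gain uniform $\Lrm^\infty$-control on the symmetric gradients and thereby reduce to the case $p>1$ already handled, or a blow-up and tangent-measure argument in the spirit of the proof of Theorem~\ref{thm:AlbertiBD}: any weak-$*$ BD limit $\psi$ of the extended $(\psi_j)$ would satisfy $E \psi = F s \, \Lcal^d$ with $s \in \Lrm^\infty(B_1;[-1,1])$ and null boundary trace, and one would exploit the ellipticity now at the level of measures (Theorem~\ref{thm:general rigidity}~(iii) and its consequences) to rule out this configuration.
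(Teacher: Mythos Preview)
Your reduction of the non-convexity claim to $SQh(0)>0$, and your argument for $p\in(1,\infty)$ via the projection $\Pbf$ onto $\spn\{F\}^\perp$ and the Mihlin multiplier $\Mbf$ from the proof of Theorem~\ref{thm:general rigidity}~(iii), match the paper's proof essentially verbatim.

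The gap is at $p=1$. Your second suggested route---passing to a weak* BD limit $\psi$ with $E\psi = Fs\,\Lcal^d$, $s\in\Lrm^\infty(B_1;[-1,1])$, and invoking ellipticity---does not close: the oscillating signs $s_j\in\{-1,1\}$ can weak*-converge to $s\equiv 0$, giving $\psi\equiv 0$, which is perfectly compatible with zero trace and yields no contradiction. Ellipticity of $\Curl\Curl$ at $F$ tells you that any such limit is smooth, but not that it is nonzero. Your first suggestion (Lipschitz truncation) could in principle be made to work, but is considerably heavier than what is needed.

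The paper's treatment of $p=1$ is much lighter and stays within the same multiplier framework: the operator $\Mbf$ is Calder\'on--Zygmund, hence of weak type $(1,1)$, so $\norm{\Pbf(\Ecal\psi_j)}_{\Lrm^1}\to 0$ forces $[\Ecal\psi_j]_{1,\infty}\to 0$ and thus (along a subsequence) $\Ecal\psi_j\to 0$ almost everywhere. Combined with the trivial pointwise bound $\abs{\Ecal\psi_j}\le C(1+\dist(\Ecal\psi_j,\{-F,F\}))$, which gives equiintegrability, Vitali's theorem upgrades this to $\Ecal\psi_j\to 0$ in $\Lrm^1(B_1)$, and your contradiction $\abs{F}\abs{B_1}\le \norm{\Ecal\psi_j}_{\Lrm^1}+\norm{e_j}_{\Lrm^1}\to 0$ then goes through unchanged.
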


We  sketch here the proof  since it is quite illuminating and shows a connection to the wave cone of the $\Curl\Curl$-operator. 

\begin{proof}[Proof of Lemma~\ref{lem:sym_qc_construction}]
The key point is to show that  $SQh(0) > 0$. Then, if \(SQh\) were convex,
\[
  SQh(0) \leq \frac{1}{2} \bigl(SQh(-F) + SQh(F) \bigr)
  \leq \frac{1}{2} \bigl(h(-F) + h(F) \bigr)
  = 0,
\]
a contradiction.

To prove that \(SQh(0) > 0\)  we argue by contradiction and assume the existence of  a sequence of maps $(\psi_j) \subset \Wrm_0^{1,\infty}(B_1;\R^d)$ such that 
\begin{equation} \label{eq:qc_constr_0}
  \dashint_{B_1} h(\Ecal \psi_j) \dd z \to 0.
\end{equation}
In particular,
\begin{equation}\label{e:distconv}
\dist(\Ecal \psi_j,\{-F,F\})\to 0 \qquad \text{in \(\Lrm^p(B_1)\).}
\end{equation}
By  mollification, we can assume that the \(\psi_j\) are smooth and we extend them by zero to \(\R^d\). This allows one to   employ the Fourier transform like in the proof of Theorem~\ref{thm:general rigidity}~(iii). Recall that  for a symmetrized gradient one has
\[
\widehat{\Ecal w}(\xi) =(2\pi \ii) \,\hat{w}(\xi)\odot \xi,\qquad \xi \in \R^d.
\]
We let \(L:=\spn{F}\) and we denote by \(  \Pbf\) the orthogonal projection onto \(L^\perp\)  (identified with its complexification). We have already seen in~\eqref{eq:qc_multiplier} that we may \enquote{invert} $\Pbf $ in the sense that if
\[
  \Pbf(\widehat{\Ecal w}) = \hat{R}
\]
for some $w \in \Wrm^{1,p}(\R^d;\R^m)$, $R \in \Lrm^p(\R^d;L^\perp)$, then
\[
  \widehat{\Ecal w}(\xi)
  = \Mbf(\xi) \hat{R}(\xi)
  = \Mbf(\xi) \Pbf(\widehat{\Ecal w}(\xi)),  \qquad
  \xi \in \R^d \setminus \{0\},
\]
for some family of linear operators $\Mbf(\xi) \colon \Rdds \to \Rdds$ that depend smoothly and positively \mbox{$0$-homogeneously} on $\xi$. Then we conclude as follows:

For $p = 2$, Plancherel's identity $\norm{g}_{\Lrm^2} = \norm{\hat{g}}_{\Lrm^2}$ together with~\eqref{e:distconv} implies
\begin{align*}
  \norm{\Ecal \psi_j}_{\Lrm^2}
  &= \norm{\widehat{\Ecal \psi_j}}_{\Lrm^2} \\
  &= \norm{\Mbf(\xi) \Pbf(\widehat{\Ecal \psi_j}(\xi))}_{\Lrm^2} \\
  &\leq \norm{\Mbf}_\infty \norm{\Pbf(\widehat{\Ecal \psi_j}(\xi))}_{\Lrm^2} \\
  &= \norm{\Mbf}_\infty \norm{\Pbf(\Ecal \psi_j)}_{\Lrm^2} \\
  &\to 0.
\end{align*}
But then $h(\Ecal \psi_j) \to \abs{F}$ in $\Lrm^1(B_1)$, contradicting~\eqref{eq:qc_constr_0}. Thus, $SQh(0) > 0$.

For $p \in (1,\infty)$, we may apply the Mihlin multiplier theorem (see for instance~\cite[Theorem~5.2.7]{Grafakos14book1}) to get analogously that
\[
  \norm{\Ecal \psi_j}_{\Lrm^p} \leq C \norm{\Mbf}_{\Crm^{\floor{d/2} + 1}} \norm{\Pbf(\Ecal \psi_j)}_{\Lrm^p} \to 0,
\]
which is again at odds with~\eqref{eq:qc_constr_0}.

For $p = 1$, we only have the weak-type estimate
\[
[\Ecal \psi_j]_{1,\infty}\le  C \norm{\Mbf}_{\Crm^{\floor{d/2} + 1}} \norm{\Pbf(\Ecal \psi_j)}_{\Lrm^1}\to 0,
\]
see~\cite[Theorem~5.2.7]{Grafakos14book1}. This in particular implies that, up to a subsequence, it holds that $\Ecal \psi_j \to 0$ almost everywhere. On the other hand, by the trivial estimate
\[
\abs{\Ecal \psi_j(x)}\le C\bigl(1+\dist(\Ecal \psi_j(x),\{-F,F\})\bigr)
\]
in conjunction with~\eqref{e:distconv} we deduce that  \(\abs{\Ecal \psi_j}\) is  equiintegrable. By Vitali's theorem, \(\Ecal \psi_j\to 0\) in \(\Lrm^1 \) and we conclude as above.
\end{proof}

An alternative way to show that \(SQh(0)>0\) would be to rely on the  following generalization of the Ball--James theorem~\cite{BallJames87} on approximate rigidity for the two-state problem. 

\begin{theorem}\label{thm:twostate}
Let \(p \in [1,\infty)\) and  \(A, B\in \R_\sym^{2 \times 2} \) be such that \(B-A\ne a \odot b\) for any $a,b \in \R^2$. Let  $(\psi_j) \subset \Wrm_0^{1,\infty}(B_1;\R^d)$ be a sequence of maps such that 
\[
\dist(\Ecal \psi_j,\{A,B\})\to 0 \qquad \text{in \(\Lrm^p(B_1)\).}
\]
Then, up to a subsequence, either \(\Ecal \psi_j\to A\)  or  \(\Ecal \psi_j\to B\)  in \(\Lrm^p\).
\end{theorem}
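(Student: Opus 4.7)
The plan is to adapt the classical Ball--James two-phase rigidity argument to the BD setting by exploiting the ellipticity of the $\Curl\Curl$ constraint in the ``bad'' direction $M:=B-A$. Recall that $M\neq a\odot b$ is equivalent (in dimension two) to $\det M>0$, i.e.\ the two eigenvalues of $M$ share a sign. First I would introduce the measurable indicator
\[
\chi_j(x):=\ONE_{\{|\Ecal\psi_j(x)-B|\leq|\Ecal\psi_j(x)-A|\}}\in\{0,1\},
\]
so that $\Ecal\psi_j=A+\chi_j M+r_j$ with $|r_j|=\dist(\Ecal\psi_j,\{A,B\})$, and hence $r_j\to 0$ in $L^p(B_1)$. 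Since $\psi_j$ is Lipschitz, $\Ecal\psi_j$ is $\Curl\Curl$-free distributionally on $B_1$, and since $A$ is constant, one obtains the crucial identity
\[
\Acal_M\chi_j=\Curl\Curl(\chi_j M)=-\Curl\Curl\, r_j \longrightarrow 0\qquad\text{in }W^{-2,p}_{\loc}(B_1),
\]
with $\Acal_M$ the second-order constant-coefficient operator of \eqref{e:ellipticg}. By Remark~\ref{rmk:ellipticity}, the assumption $M\neq a\odot b$ is exactly what makes $\Acal_M$ \emph{elliptic} (a positive or negative multiple of the Laplacian after diagonalising and rescaling).

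Next I would extract strong $L^p_{\loc}$-precompactness of $(\chi_j)$ from this elliptic equation together with the trivial bound $\|\chi_j\|_{L^\infty}\leq 1$. Fix $U\Subset B_1$ and a cutoff $\phi\in C^\infty_c(B_1)$ with $\phi\equiv 1$ on a neighbourhood of $\overline U$, and set $\tilde r_j:=\phi r_j\in L^p(\R^2)$, which is compactly supported and still tends to zero in $L^p(\R^2)$. Let $K$ be a fundamental solution of $\Acal_M$ on $\R^2$ (of logarithmic type, as in the sketch of Theorem~\ref{thm:AlbertiBD}). Then $w_j:=(\Curl\Curl K)\conv \tilde r_j$ solves $\Acal_M w_j=\Curl\Curl \tilde r_j$ on $\R^2$, and since $\phi\equiv 1$ on $U$ this gives $\Acal_M(\chi_j+w_j)=0$ there, so $h_j:=\chi_j+w_j$ is $\Acal_M$-harmonic on $U$ and in particular smooth by hypoellipticity. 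For $p\in(1,\infty)$, Calder\'on--Zygmund theory gives $w_j\to 0$ in $L^p(\R^2)$, so $h_j$ is bounded in $L^p(U)$; interior elliptic estimates for the smooth $\Acal_M$-harmonic functions $h_j$ yield local $C^k$-bounds, whence $(h_j)$ is precompact in $C^0_{\loc}(U)$ by Arzel\`a--Ascoli, and consequently $\chi_j=h_j-w_j$ is precompact in $L^p_{\loc}(U)$.

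Along a subsequence $\chi_j\to\chi$ in $L^p_{\loc}(B_1)$ and pointwise a.e., so $\chi\in\{0,1\}$ a.e.; moreover $\Acal_M\chi=0$ distributionally, so $\chi\in C^\infty(B_1)$ by Weyl's lemma. A continuous $\{0,1\}$-valued function on the connected set $B_1$ is constant, hence $\chi\equiv 0$ or $\chi\equiv 1$. Dominated convergence (using $|\chi_j|\leq 1$ and $|B_1|<\infty$) upgrades the local convergence to $\chi_j\to\chi$ in $L^p(B_1)$, and combining this with $r_j\to 0$ in $L^p(B_1)$ yields $\Ecal\psi_j\to A+\chi M\in\{A,B\}$ in $L^p(B_1)$, as claimed.

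The main obstacle is the borderline case $p=1$, where Calder\'on--Zygmund operators only satisfy the weak-type $(1,1)$ bound $[w_j]_{1,\infty}\lesssim \|\tilde r_j\|_{L^1}\to 0$ rather than a norm bound, precisely the difficulty encountered in the proof of Lemma~\ref{lem:sym_qc_construction}. To close this case I would observe that on the bounded set $U$ one has $L^{1,\infty}\subset L^q$ for every $q\in(0,1)$, so $h_j$ remains bounded in $L^q_{\loc}(U)$; Moser-type interior estimates for the $\Acal_M$-harmonic functions $h_j$ still provide a uniform $L^\infty_{\loc}$-bound, which combined with $\chi_j\in L^\infty$ forces $w_j=h_j-\chi_j$ to be uniformly bounded in $L^\infty_{\loc}(U)$. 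Interpolating this $L^\infty$-bound against the weak-$L^1$ decay via the elementary splitting $\|w_j\|_{L^1(U')}\leq\varepsilon|U'|+\|w_j\|_{L^\infty(U')}\,|\{|w_j|>\varepsilon\}|$ and optimising in $\varepsilon$ recovers $w_j\to 0$ in $L^1_{\loc}$, after which the remainder of the argument proceeds identically.
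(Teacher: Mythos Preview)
The paper does not supply its own proof of this theorem: immediately after the statement it simply refers the reader to~\cite{DePhilippisPalmieriRindler18}, noting only that the argument there ``is based on the techniques of~\cite{DePhilippisRindler16}''. So there is no detailed proof in the paper to compare against, only the indication that the same ellipticity/Calder\'on--Zygmund machinery underlying the proof sketch of Theorem~\ref{thm:AlbertiBD} in Section~\ref{sec:singularities} is what drives the result.

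Your proposal is precisely such an implementation, and it is essentially correct. The Ball--James decomposition $\Ecal\psi_j=A+\chi_j M+r_j$, the identification $\Curl\Curl(\chi_j M)=\Acal_M\chi_j$ via~\eqref{e:ellipticg}, and the observation that $M\neq a\odot b$ (equivalently $\det M>0$ in $d=2$) makes $\Acal_M$ elliptic, are exactly the ingredients singled out in Remark~\ref{rmk:ellipticity} and in the sketch of Section~3.1. For $p\in(1,\infty)$ your cut-off plus fundamental-solution splitting $\chi_j=h_j-w_j$ with $h_j$ $\Acal_M$-harmonic and $w_j$ a Calder\'on--Zygmund image of $\tilde r_j$ is clean, and the passage ``$\chi$ smooth and $\{0,1\}$-valued on a connected set $\Rightarrow$ constant'' is the correct endgame.

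The one place that deserves a closer look is your $p=1$ argument. It works, but it leans on a nontrivial fact: the interior estimate $\|h\|_{L^\infty(V)}\lesssim\|h\|_{L^q(U)}$ for $\Acal_M$-harmonic $h$ and exponents $q\in(0,1)$. After the affine change of variables reducing $\Acal_M$ to $\Delta$, this is the subharmonic Moser estimate applied to $h^\pm$, so it is legitimate, but it is worth flagging as the genuinely non-elementary step. Once you have $w_j\in L^\infty_{\loc}$ uniformly, your splitting bound $\|w_j\|_{L^1(U')}\le \eps|U'|+\|w_j\|_{L^\infty(U')}\,[w_j]_{1,\infty}/\eps$ correctly yields $w_j\to0$ in $L^1_{\loc}$, and the rest follows as in the case $p>1$. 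This mirrors the way the paper handles the borderline $p=1$ in the proof of Lemma~\ref{lem:sym_qc_construction} (weak-type bound plus an additional compactness mechanism), so your treatment is entirely in the spirit of the surrounding arguments.
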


Indeed, applying the above lemma to \(\{A,B\}=\{-F,F\}\), one  obtains  that either \(\Ecal \psi_j \to F\)  or \(\Ecal \psi_j \to -F\) in \(\Lrm^p\), in contradiction with  the fact that 
\[
\int_{B_1}\Ecal \psi_j=0.
\]
 which  follows from the zero trace assumption on \(\psi_j\in \Wrm_0^{1,\infty}(B_1;\R^d)\).

Theorem~\ref{thm:twostate}  can in fact be proved in  the more general context of \(\Acal\)-free measures;  we refer the reader to~\cite{DePhilippisPalmieriRindler18} (which is based on the techniques of~\cite{DePhilippisRindler16}). For the case of first-order operators $\Acal$, this result is also proved in~\cite{ChiodaroliFeireislKremlWiedemann17}.

Let us also remark that recently there has been a detailed investigation into symmetric polyconvexity, see~\cite{BoussaidKreisbeckSchlomerkemper19}.

\subsection{Relaxation}

We now consider the question raised at the beginning of this section, namely to identify the relaxation $\Fcal_*$ of $\Fcal$. First results in this direction for functions in $\BD(\Omega)$, but without a Cantor part (i.e., the singular part $E^s u$ originates from jumps only and does not contain Cantor-type measures), were proved in~\cite{BellettiniCosciaDalMaso98,BarrosoFonsecaToader00,Ebobisse05,GargiuloZappale08}.

The first lower semicontinuity theorem applicable to the whole space $\BD(\Omega)$ was proved in~\cite{Rindler11} by employing the results of Section~\ref{sec:rigidity} together with a careful analysis of tangent measures and (iterated) tangent Young measures (see Section~\ref{ssc:tanYM} below). That work, however, left open the question of relaxation, where more information on the structure of the singular part is required. In this context, we refer to~\cite{AmbrosioDalMaso92,FonsecaMuller93}, where this question is treated for BV-maps via Alberti's rank-one theorem (and Corollary \ref{cor:Alberti_1D}), and to~\cite{Rindler12}, which shows that Alberti's rank-one theorem is not necessary to prove weak* lower semicontinuity in BV (without a full relaxation theorem). In \(\BD\), a first intermediate relaxation result was obtained in~\cite{ArroyoRabasaDePhilippisRindler17?} and an essentially optimal version  was finally proved in~\cite{KosibaRindler19?}, see~Theorem~\ref{thm:lsc-bd} below.

A challenge in the formulation of a relaxation theorem is that it involves passing to the symmetric-quasiconvex hull $SQf$ of the integrand (defined in~\eqref{eq:SQf}), but in general the (strong) recession function $(SQf)^\infty$ does not exist; in this context, we refer to~\cite[Theorem 2]{Muller92} for a counterexample. Thus, we need a more general notion of recession function: For any $f \in \Crm(\cl{\Omega} \times \Rdds)$ with linear growth at infinity we can always define the \term{generalized recession function} $f^\# \colon \cl{\Omega} \times \Rdds \to \R$ via
\[
  f^\#(x,A) := \limsup_{\substack{\!\!\!\! x' \to x \\ \!\!\!\! A' \to A \\ \; t \to \infty}} \,
  \frac{f(x',tA')}{t},  \qquad x \in \cl{\Omega}, \, A \in \Rdds,
\]
which again is always positively $1$-homogeneous and the linear growth at infinity of $f$ suffices for $f^\#$ to take only real values. In other works, $f^\#$ is usually just called the \enquote{recession function} (and denoted by ``$f^\infty$''), but here the distinction to between $f^\#$ and $f^\infty$ is important. It is elementary to prove that $f^\#(x,\frarg)$ is upper semicontinuous. For a convex function $f$, always $f^\# = f^\infty$. We refer to~\cite{AuslenderTeboulle03book} for a more systematic approach to recession functions and their associated cones.

As mentioned before, the following relaxation result is proved in~\cite{KosibaRindler19?}:

\begin{theorem}\label{thm:lsc-bd}
	Let $\Omega\subset\mathbb{R}^d$ be a bounded Lipschitz domain and let $f \colon \Rdds \to[0,\infty)$ be a continuous function such that there exist constants $0<c\leq C$, for which the inequality
	\[
	c|A|\leq f(A)\leq C(1+|A|), \qquad A\in\Rdds,
	\]
	holds.
	Then, the weak* relaxation of the functional $\Fcal$ in $\BD(\Omega)$ is given by
	\begin{align*}
	\Fcal_{*}[u]
	&=
	\int_{\Omega}(SQf)(\Ecal u(x)) \dd x
	+
	\int_{\Omega}(SQf)^{\#}\biggl(\frac{\di E^su}{\di |E^su|}(x)\biggr) \dd|E^su|(x), \quad u\in\BD(\Omega).
	\end{align*}
	In particular, $\Fcal_*$ is weakly* lower semicontinuous on $\BD(\Omega)$.
\end{theorem}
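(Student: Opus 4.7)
The proof I have in mind follows the standard dichotomy of proving the two inequalities $\Fcal_* \geq \overline{\Fcal}$ and $\Fcal_* \leq \overline{\Fcal}$ separately, where $\overline{\Fcal}$ denotes the right-hand side with integrand $(SQf, (SQf)^\#)$. The main novelty compared to the BV-theory of Ambrosio--Dal~Maso and Fonseca--M\"uller is that we must use Theorem~\ref{thm:AlbertiBD} and the very good blow-ups of Theorem~\ref{thm:very_good_blowups} at singular points.

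\textbf{Upper bound (construction of recovery sequences).} First, replace $f$ by $SQf$: this is harmless because for any smooth $v \in \LD(\Omega)$ and any Lebesgue point $x_0$ of $\Ecal v$ one can insert, on small balls around $x_0$, compactly supported perturbations $\psi \in \Wrm_0^{1,\infty}$ taken from a near-optimal sequence in the definition~\eqref{eq:SQf} of $SQf$. A standard Vitali covering + diagonal argument then produces for every $v \in \LD(\Omega)$ a sequence $v_j \toweakstar v$ with $v_j \in \LD$ and $\int f(\Ecal v_j) \to \int SQf(\Ecal v)$, without changing $v$ in the $\Lrm^1$-sense. Next, approximate a general $u \in \BD(\Omega)$ area-strictly by smooth maps $u_\eps \in \LD(\Omega)$ (mollification). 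By Proposition~\ref{prop:F_strictly_cont_ext_BD} applied to $SQf$ (and an upper-semicontinuity refinement needed because $(SQf)^\infty$ may fail to exist, so that one only controls $(SQf)^\#$), one gets
\[
\limsup_{\eps \to 0} \int_\Omega SQf(\Ecal u_\eps) \dd x \leq \int_\Omega SQf(\Ecal u) \dd x + \int_\Omega (SQf)^\#\biggl(\frac{\di E^s u}{\di |E^s u|}\biggr) \dd |E^s u|.
\]
A diagonal argument combined with the first step then yields the recovery sequence.

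\textbf{Lower bound (blow-up method).} Fix $u_j \toweakstar u$ in $\BD(\Omega)$ with $\Fcal[u_j] \to \ell < \infty$ and consider the nonnegative measures $\mu_j := f(\Ecal u_j)\,\Lcal^d$. Up to subsequence $\mu_j \toweakstar \mu$ on $\cl{\Omega}$. By Radon--Nikod\'ym, decompose $\mu = \mu^a \Lcal^d + \mu^s$. The goal is to show that at $\Lcal^d$-a.e.\ $x_0$, $\mu^a(x_0) \geq SQf(\Ecal u(x_0))$, and at $|E^s u|$-a.e.\ $x_0$, $\frac{\di \mu}{\di |E^s u|}(x_0) \geq (SQf)^\#\bigl(\frac{\di E^s u}{\di |E^s u|}(x_0)\bigr)$; summing then gives $\ell \geq \overline{\Fcal}[u]$.

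The regular-point inequality is the classical consequence of symmetric-quasiconvexity of $SQf$ (Lemma~\ref{lem:Qh_QC}): rescale around a Lebesgue point of $\Ecal u$ where $|E^su|$ has zero density, use that $\Ecal u$ becomes constant in the blow-up, use suitable boundary adjustments (into $\LD_0$) and the growth bound to invoke the definition of $SQf$. At a $|E^s u|$-generic singular point $x_0$, Theorem~\ref{thm:AlbertiBD} guarantees that the polar is $a \odot b$ and Theorem~\ref{thm:very_good_blowups} furnishes a blow-up sequence whose symmetrized-gradient limit is supported on a one-dimensional structure $\eta G(x\cdot\xi) + A(x)$. Rescaling $u_j$ appropriately, and again modifying near the boundary of a ball so as to lie in $\LD_0$, one reduces to testing $SQf$ on admissible competitors; using the positive $1$-homogeneity of $(SQf)^\#$ together with the upper-semicontinuity relation $(SQf)^\#(tA)/t \to (SQf)^\#(A)$ as $t \to \infty$ yields the singular inequality.

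\textbf{Main obstacle.} The hard part is the singular lower bound. Because $(SQf)^\infty$ need not exist, one cannot directly appeal to continuity of the recession function on the polar. Instead, the one-dimensional structure of the very good blow-up in Theorem~\ref{thm:very_good_blowups} must be used to reduce the problem to testing $SQf$ along an essentially scalar BV-jump configuration pointing in direction $\xi$ with slope in direction $\eta \in \{a,b\}$. The upper-semicontinuity of $A \mapsto (SQf)^\#(A)$ and the fact that only the polar direction $a \odot b \in \Lambda_{\Curl\Curl}$ enters (by Theorem~\ref{thm:AlbertiBD} and~\eqref{eq:Lambda}) are what make this reduction possible. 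Without the BD rank-one theorem and the one-directional blow-up, one would not know in advance that the singular part is compatible with the inherently one-dimensional nature of the recession energy, and the boundary correction from generic BD test maps to $\LD_0$-admissible competitors would fail.
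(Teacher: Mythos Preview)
Your proposal is correct and follows essentially the same approach as the paper, which merely states that the proof proceeds by the blow-up method of Fonseca--M\"uller and exploits Theorem~\ref{thm:AlbertiBD} together with the very good blow-ups of Theorem~\ref{thm:very_good_blowups}. Your expansion into upper and lower bounds, with the singular lower bound handled via the one-directional tangent structure, is exactly the intended argument (carried out in detail in~\cite{KosibaRindler19?}).
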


The proof of this theorem proceeds by the blow-up method, see e.g.~\cite{FonsecaMuller92,FonsecaMuller93}, and exploits Theorem~\ref{thm:AlbertiBD} as well as the existence of very good blow-ups from Theorem~\ref{thm:very_good_blowups}. 

We also note that~\cite{CarocciaFocardiVanGoethem19?} establishes a general integral representation theorem for the relaxed functional $\Fcal_*$.

We conclude this section by noting that the previous discussion applies to functionals  whose integrand depends  only on the symmetric part of the gradient.  However, it is not hard to construct an integrand \(f \colon \R^{d \times d}\to \R\),   which depends on the full  matrix, but for which 
\[
|A+A^T| \lesssim f(A)\lesssim 1+ |A+A^T|,  \qquad A \in \R^{d \times d}.
\] 
In this case the corresponding integral functional \(\Fcal\) will be coercive only on BD and one would like to study the relaxed functional \(\Fcal_{*}\). This has been achieved in some specific  cases when  \(E^c u=0\), see~\cite{ContiFocardiIurlano17, FriedrichSolombrino19}, but in the general case not much is known, see the discussion in~\cite[Section~7]{ContiFocardiIurlano17}.

\subsection{Generalized Young measures}

In the remainder of this survey we consider a more abstract approach to the theory of integral functionals defined on $\BD$, namely through the theory of generalized Young measures. These objects keep track of all oscillations and concentrations in a weakly* converging sequence of measures; here, we will apply this to the symmetrized derivatives $(Eu_j)$ of a weakly*-converging sequence of BD-maps $(u_j) \subset \BD(\Omega)$. Our presentation follows~\cite{AlibertBouchitte97,KristensenRindler10,Rindler11,DePhilippisRindler17,Rindler18book}, where also proofs and examples can be found.

Let, as usual, $\Omega \subset \R^d$ be a bounded Lipschitz domain. For $f \in \Crm(\cl{\Omega} \times \R^N)$ and $g \in \Crm(\cl{\Omega} \times \Bbb^N)$, where $\Bbb^N$ denotes the open unit ball in $\R^N$, we let
\begin{align*}
    (Rf)(x,\hat{A}) &:= (1-\absn{\hat{A}}) \, f \biggl(x, \frac{\hat{A}}{1-\absn{\hat{A}}} \biggr),
      \qquad x \in \cl{\Omega}, \, \hat{A} \in \Bbb^N. 
\end{align*}
Define
\[
  \Ebf(\Omega;\R^N) := \setb{ f \in \Crm(\cl{\Omega} \times \R^N) }{ \text{$Rf$ extends continuously onto $\cl{\Omega \times \Bbb^N}$} }.
\]
In particular, $f \in \Ebf(\Omega;\R^N)$ has linear growth at infinity with growth constant $C = \norm{Rf}_{\Lrm^\infty(\Omega \times \Bbb^N)}$. Furthermore, for all $f \in \Ebf(\Omega;\R^N)$, the \term{(strong) recession function} $f^\infty \colon \cl{\Omega} \times \R^N \to \R$, defined in~\eqref{eq:f_infty}, exists and takes finite values. It can be shown that in fact $f \in \Crm(\cl{\Omega};\R^N)$ is in the class $\Ebf(\Omega;\R^N)$ if and only if  $f^\infty$ exists in the sense~\eqref{eq:f_infty}.

A \term{(generalized) Young measure} $\nu \in \Ybf(\Omega;\R^N)$ on the open set $\Omega \subset \R^d$ with values in $\R^N$ is a triple $\nu = (\nu_x,\lambda_\nu,\nu_x^\infty)$ consisting of
\begin{itemize}
  \item[(i)] a parametrized family of probability measures $(\nu_x)_{x \in \Omega} \subset \Mcal_1(\R^N)$, called the \term{oscillation measure};
  \item[(ii)] a positive finite measure $\lambda_\nu \in \Mcal_+(\cl{\Omega})$, called the \term{concentration measure}; and
  \item[(iii)] a parametrized family of probability measures $(\nu_x^\infty)_{x \in \cl{\Omega}} \subset \Mcal_1(\Sbb^{N-1})$, called the \term{concentration-direction measure},
\end{itemize}
for which we require that
\begin{itemize}
  \item[(iv)] the map $x \mapsto \nu_x$ is weakly* measurable with respect to $\Lcal^d$, i.e., the function $x \mapsto \dpr{f(x,\frarg),\nu_x}$ is $\Lcal^d$-measurable for all bounded Borel functions $f \colon \Omega \times \R^N \to \R$;
  \item[(v)] the map $x \mapsto \nu_x^\infty$ is weakly* measurable with respect to $\lambda_\nu$; and
  \item[(vi)] $x \mapsto \dprn{\abs{\frarg},\nu_x} \in \Lrm^1(\Omega)$.
\end{itemize}

The \term{duality pairing} between $f \in \Ebf(\Omega;\R^N)$ and $\nu \in \Ybf(\Omega;\R^N)$ is given as
\begin{align*}
  \ddprb{f,\nu} &:= \int_\Omega \dprb{f(x,\frarg), \nu_x} \dd x
    + \int_{\cl{\Omega}} \dprb{f^\infty(x,\frarg),\nu_x^\infty} \dd \lambda_\nu(x) \\
  &:= \int_\Omega \int_{\R^N} f(x,A) \dd \nu_x(A) \dd x
  + \int_{\cl{\Omega}} \int_{\partial \Bbb^N} f^\infty(x,A) \dd \nu_x^\infty(A) \dd \lambda_\nu(x).
\end{align*}
The weak* convergence $\nu_j \toweakstar \nu$ in $\Ybf(\Omega;\R^N) \subset \Ebf(\Omega;\R^N)^*$ is then defined with respect to this duality pairing. If $(\gamma_j) \subset \Mcal(\cl{\Omega};\R^N)$ is a sequence of measures with $\sup_j \abs{\gamma_j}(\cl{\Omega}) < \infty$, then we say that the sequence $(\gamma_j)$ \term{generates} a Young measure $\nu \in \Ybf(\Omega;\R^N)$, in symbols $\gamma_j \toY \nu$, if for all $f \in \Ebf(\Omega;\R^N)$ it holds that
\begin{align*}
&f \biggl( x, \frac{\di \gamma_j}{\di \Lcal^d}(x)\biggr) \,\Lcal^d \restrict \Omega
+ f^\infty \biggl(x, \frac{\di \gamma_j}{\di \abs{\gamma_j}}(x) \biggr) \, \abs{\gamma^s_j}\\
&\qquad\toweakstar\;\; \dprb{f(x,\frarg), \nu_x} \, \Lcal^d \restrict \Omega + \dprb{f^\infty(x,\frarg),
\nu_x^\infty} \, \lambda_\nu  \qquad\text{in $\Mcal(\cl{\Omega})$.}
\end{align*}

Also, for $\nu \in \Ybf(\Omega;\R^N)$ we define the \term{barycenter} as the measure
\[
  [\nu] := \dprb{\id, \nu_x} \, \Lcal^d \restrict \Omega + \dprb{\id, \nu_x^\infty} \, \lambda_\nu
  \in \Mcal(\cl{\Omega};\R^N).
\]

The following is the central compactness result in $\Ybf(\Omega;\R^N)$:

\begin{lemma}[Compactness] \label{lem:compact}
Let $(\nu_j) \subset \Ybf(\Omega;\R^N)$ be such that
\[
  \supmod_j \, \ddprb{\ONE \otimes \abs{\frarg}, \nu_j} < \infty.
\]
Then, $(\nu_j)$ is weakly* sequentially relatively compact in $\Ybf(\Omega;\R^N)$, i.e., there exists a subsequence (not relabeled) such that $\nu_j \toweakstar \nu$ and $\nu \in \Ybf(\Omega;\R^N)$.
\end{lemma}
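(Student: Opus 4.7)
The plan is to realize $\Ybf(\Omega;\R^N)$ as a subset of the dual of a separable Banach space, extract a weak* convergent subsequence by sequential Banach--Alaoglu, and then verify that the limit lies back in this subset. The crucial preliminary observation is that the rescaling $R$ provides an isometric isomorphism between $\Ebf(\Omega;\R^N)$, equipped with the norm $\norm{f}:=\norm{Rf}_{\Lrm^\infty(\Omega\times\Bbb^N)}$, and the separable Banach space $\Crm(\overline{\Omega}\times\overline{\Bbb^N})$: injectivity is immediate, and any $g\in\Crm(\overline{\Omega}\times\overline{\Bbb^N})$ arises as $Rf$ for $f(x,A):=(1+|A|)\,g\bigl(x,A/(1+|A|)\bigr)\in\Ebf(\Omega;\R^N)$.

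To each $\nu\in\Ybf(\Omega;\R^N)$ I would associate a positive Radon measure $\iota(\nu)\in\Mcal_+(\overline{\Omega}\times\overline{\Bbb^N})$ via, for $g\in\Crm(\overline{\Omega}\times\overline{\Bbb^N})$,
\[
\int g\,\di\iota(\nu):=\int_\Omega\!\int_{\R^N}(1+|A|)\,g\Bigl(x,\tfrac{A}{1+|A|}\Bigr)\di\nu_x(A)\di x+\int_{\overline{\Omega}}\!\int_{\Sbb^{N-1}}g(x,\omega)\di\nu_x^\infty(\omega)\di\lambda_\nu(x).
\]
A direct change of variables gives $\ddprb{f,\nu}=\int Rf\,\di\iota(\nu)$ for all $f\in\Ebf(\Omega;\R^N)$, together with the mass identity $\norm{\iota(\nu)}_{\mathrm{TV}}=\Lcal^d(\Omega)+\ddprb{\ONE\otimes|\frarg|,\nu}$. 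The hypothesis then furnishes a uniform total-variation bound on the sequence $(\iota(\nu_j))_j$. Since $\overline{\Omega}\times\overline{\Bbb^N}$ is a compact metric space, the weak* topology is metrizable on norm-bounded subsets of $\Mcal(\overline{\Omega}\times\overline{\Bbb^N})=\Crm(\overline{\Omega}\times\overline{\Bbb^N})^*$, and Banach--Alaoglu yields, along a subsequence, $\iota(\nu_j)\toweakstar\gamma$ for some $\gamma\in\Mcal_+(\overline{\Omega}\times\overline{\Bbb^N})$.

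The final step is to show $\gamma=\iota(\nu)$ for some $\nu\in\Ybf(\Omega;\R^N)$. I would disintegrate $\gamma=\sigma\otimes(\mu_x)$ along the projection $\pi\colon\overline{\Omega}\times\overline{\Bbb^N}\to\overline{\Omega}$, with $\sigma:=\pi_\#\gamma$ and $\mu_x\in\Mcal_1(\overline{\Bbb^N})$, Lebesgue-decompose $\sigma=h\Lcal^d\restrict\Omega+\sigma^s$, and split each $\mu_x=\mu_x|_{\Bbb^N}+\mu_x|_{\Sbb^{N-1}}$. The interior piece defines the oscillation $\nu_x$ as the push-forward of $(1-|\hat A|)^{-1}\mu_x|_{\Bbb^N}$ under $\hat A\mapsto\hat A/(1-|\hat A|)$, normalized to a probability measure, while $\sigma^s$ together with the $\Sbb^{N-1}$-restrictions yield the concentration data $\lambda_\nu$ and $\nu_x^\infty$; the required weak* measurability in $x$ comes from the disintegration theorem.

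The only delicate point, and indeed the main obstacle, is verifying that the reconstructed $\nu_x$ and $\nu_x^\infty$ are genuine \emph{probability} measures $\Lcal^d$- and $\lambda_\nu$-a.e., respectively: an arbitrary $\gamma\in\Mcal_+(\overline{\Omega}\times\overline{\Bbb^N})$ need not lie in the image of $\iota$, and the probability-normalizations of the two parametrized families must be shown to survive the weak*-limit. This compatibility can be established by testing $\gamma$ against functions depending only on $x$ (which recovers the spatial marginal $\Lcal^d\restrict\Omega+\lambda_\nu$) and against cutoffs localized near either $|\hat A|=0$ or $|\hat A|=1$, keeping careful track of the mass that accumulates on the boundary $\Sbb^{N-1}$ in the limit, so that such mass is correctly absorbed into the concentration $(\lambda_\nu,\nu_x^\infty)$ rather than violating the normalization of $\nu_x$.
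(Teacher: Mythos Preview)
The paper does not give its own proof of this lemma; it is stated as background and the reader is referred to~\cite{AlibertBouchitte97,KristensenRindler10,Rindler11,DePhilippisRindler17,Rindler18book} for details. Your outline is precisely the standard argument carried out in those references (in particular in~\cite{KristensenRindler10} and~\cite[Chapter~12]{Rindler18book}): embed $\Ybf(\Omega;\R^N)$ into $\Mcal_+(\overline{\Omega}\times\overline{\Bbb^N})$ via the compactification induced by $R$, extract a weak*-limit by Banach--Alaoglu, and disintegrate.

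Your identification of the ``delicate point'' is accurate, but the resolution you sketch is slightly imprecise. The clean way to recover the probability normalization of $\nu_x$ is to test with $f(x,A)=\varphi(x)$ for $\varphi\in\Crm(\overline{\Omega})$; since $f^\infty=0$, one has $\ddprb{\varphi\otimes 1,\nu_j}=\int_\Omega\varphi\dd x$ for every $j$, and hence $\int (1-|\hat A|)\varphi(x)\dd\gamma(x,\hat A)=\int_\Omega\varphi\dd x$. Disintegrating $\gamma=\sigma\otimes\mu_x$ with $\sigma=h\,\Lcal^d\restrict\Omega+\sigma^s$, this forces $h(x)\int_{\Bbb^N}(1-|\hat A|)\dd\mu_x(\hat A)=1$ for $\Lcal^d$-a.e.\ $x$ (so in particular $h>0$ a.e.) and $\mu_x(\Bbb^N)=0$ for $\sigma^s$-a.e.\ $x$. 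The first identity makes $\nu_x:=h(x)\,T_\#\bigl((1-|\hat A|)\mu_x\restrict\Bbb^N\bigr)$, with $T(\hat A)=\hat A/(1-|\hat A|)$, automatically a probability measure without any further normalization step; the second shows that $\sigma^s$ contributes only to the concentration data. The concentration measure is then $\lambda_\nu=h(x)\mu_x(\Sbb^{N-1})\,\Lcal^d\restrict\Omega+\sigma^s$ with $\nu_x^\infty$ the normalized restriction of $\mu_x$ to $\Sbb^{N-1}$. So the argument works, but ``normalized to a probability measure'' should be replaced by ``which is automatically a probability measure by the marginal constraint''.
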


In particular, if $(\gamma_j) \subset \Mcal(\cl{\Omega};\R^N)$ is a sequence of measures with $\sup_j \abs{\gamma_j}(\cl{\Omega}) < \infty$ as above, then there exists a subsequence (not relabeled) and $\nu \in \Ybf(\Omega;\R^N)$ such that $\gamma_j \toY \nu$.

\subsection{BD-Young measures}

A Young measure in $\Ybf(\Omega;\R_\sym^{d \times d})$ is called a \term{BD-Young measure}, $\nu \in \BDY(\Omega)$, if it can be generated by a sequence of BD-symmetrized derivatives. That is, for all $\nu \in \BDY(\Omega)$ there exists a (necessarily norm-bounded) sequence $(u_j) \subset \BD(\Omega)$ with $Eu_j \toY \nu$. When working with $\BDY(\Omega)$, the appropriate space of integrands is $\Ebf(\Omega;\R_\sym^{d \times d})$ since it is clear that both $\nu_x$ and $\nu_x^\infty$ only take values in $\R_\sym^{d \times d}$ whenever $\nu \in \BDY(\Omega)$. It is easy to see that for a BD-Young measure $\nu \in \BDY(\Omega)$ there exists $u \in \BD(\Omega)$ satisfying $Eu = [\nu] \restrict \Omega$; any such $u$ is called an \term{underlying deformation} of $\nu$. 

The following results about BD-Young measures are proved in~\cite{KristensenRindler10,Rindler11,Rindler18book} (the references~\cite{KristensenRindler10,Rindler18book} treat BV-Young measures, but the proofs adapt line-by-line).

\begin{lemma}[Good generating sequences] \label{lem:good_genseq}
Let $\nu \in \BDY(\Omega)$.
\begin{itemize}
  \item[(i)] There exists a generating sequence $(u_j) \subset \BD(\Omega) \cap \Crm^\infty(\Omega;\R^d)$ with $Eu_j \toY \nu$.
  \item[(ii)] If additionally $\lambda_\nu(\partial \Omega) = 0$, then the $u_j$ from (i) can be chosen to satisfy $u_j|_{\partial \Omega} = u|_{\partial \Omega}$, where $u \in \BD(\Omega)$ is any underlying deformation of $\nu$.
\end{itemize}
\end{lemma}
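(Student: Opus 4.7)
The plan is to prove (i) by mollification with a diagonal argument, and (ii) by a boundary-gluing construction that uses an annular averaging trick and the hypothesis $\lambda_\nu(\partial\Omega)=0$.

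For (i), start with an arbitrary generating sequence $(v_j)\subset\BD(\Omega)$ with $Ev_j\toY\nu$. Extend each $v_j$ to a slightly larger Lipschitz domain $\tilde\Omega\Supset\Omega$ via the $\BD$-extension operator of Temam--Strang, and set $v_j^\eps:=v_j\conv\rho_\eps$ for a standard mollifier $\rho_\eps$. For fixed $j$, the sequence $v_j^\eps$ converges area-strictly to $v_j$ as $\eps\todown0$, since mollification does not increase total variation while lower semicontinuity provides the matching lower bound. Area-strict convergence is stronger than $\Ybf$-convergence to the elementary Young measure of $v_j$, so by separability of $\Ebf(\Omega;\Rdds)$ (test against a countable dense family) one selects $\eps_j\todown0$ diagonally so that $u_j:=v_j\conv\rho_{\eps_j}\in\Crm^\infty(\Omega;\R^d)\cap\BD(\Omega)$ still satisfies $Eu_j\toY\nu$.

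For (ii), let $u\in\BD(\Omega)$ be a fixed underlying deformation, so that $[\nu]\restrict\Omega=Eu$. Apply (i) to obtain a smooth generating sequence $(v_j)$; by subtracting (and re-absorbing) rigid deformations and using the Poincar\'e inequality in $\BD$, we may arrange $v_j\to u$ strongly in $\Lrm^1(\Omega;\R^d)$. Independently, construct a smooth approximation $(\tilde u_k)\subset\Crm^\infty(\Omega;\R^d)\cap\BD(\Omega)$ of $u$ that preserves the boundary trace $u|_{\partial\Omega}$: this is achieved by an Anzellotti--Giaquinta-type mollification whose stepsize vanishes as one approaches $\partial\Omega$, yielding $\tilde u_k\to u$ area-strictly with $\tilde u_k|_{\partial\Omega}=u|_{\partial\Omega}$. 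For each $j$, choose $\delta_j\todown0$, $M_j\nearrow\infty$, and partition the collar $C_j:=\setn{x\in\Omega}{\dist(x,\partial\Omega)<\delta_j}$ into $M_j$ concentric annular shells $A_j^{(1)},\ldots,A_j^{(M_j)}$, each of width $\delta_j/M_j$. By the De Giorgi pigeonhole principle there is a shell $A_j^{(\ell^*)}$ with
\[
  \norm{v_j-\tilde u_{k(j)}}_{\Lrm^1(A_j^{(\ell^*)})}+\abs{Ev_j-E\tilde u_{k(j)}}(A_j^{(\ell^*)})\leq\frac{C}{M_j},
\]
where $C$ depends only on $\sup_j(\norm{v_j}_\BD+\norm{\tilde u_{k(j)}}_\BD)<\infty$. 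Fix a smooth cut-off $\phi_j$ that equals $1$ on the interior side of $A_j^{(\ell^*)}$, vanishes on its $\partial\Omega$-side, and satisfies $\abs{\nabla\phi_j}\lesssim M_j/\delta_j$ on the transition shell. Define
\[
  u_j:=\phi_j v_j+(1-\phi_j)\tilde u_{k(j)}\in\Crm^\infty(\Omega;\R^d)\cap\BD(\Omega).
\]
Near $\partial\Omega$ we have $u_j=\tilde u_{k(j)}$, hence $u_j|_{\partial\Omega}=u|_{\partial\Omega}$.

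To verify $Eu_j\toY\nu$, split
\[
  Eu_j=\phi_j Ev_j+(1-\phi_j)E\tilde u_{k(j)}+\nabla\phi_j\odot(v_j-\tilde u_{k(j)}),
\]
and test against $f\in\Ebf(\Omega;\Rdds)$. On $\Omega\setminus C_j$ we have $u_j=v_j$, yielding the contribution $\ddpr{f,\nu\restrict(\Omega\setminus\partial\Omega)}$ in the limit. On $C_j$ the contribution is controlled by the pigeonhole bound on $A_j^{(\ell^*)}$ (the error term $\nabla\phi_j\odot(v_j-\tilde u_{k(j)})$ has $\Lrm^1$-norm $\lesssim(M_j/\delta_j)\cdot(C/M_j)=C/\delta_j$ times a mass factor, which must be balanced), together with $\abs{Ev_j}(C_j)\to\lambda_\nu(\partial\Omega)+\int_{\partial\Omega}\ldots=0$ thanks to $\lambda_\nu(\partial\Omega)=0$. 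Choosing $M_j,\delta_j,k(j)$ via a diagonal argument completes the proof.

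The main obstacles are two-fold and both lie in (ii): first, producing the trace-preserving smooth approximation $\tilde u_k$ of $u$, which requires a careful anisotropic mollification in the Lipschitz domain; and second, balancing the three parameters $M_j,\delta_j,k(j)$ so that the annular cut-off estimate $C/\delta_j$ for the $\nabla\phi_j$-term remains compatible with the vanishing boundary mass. The hypothesis $\lambda_\nu(\partial\Omega)=0$ is the essential ingredient here, since it forces the singular mass of $Ev_j$ to escape from any shrinking collar and prevents spurious concentrations in the generated Young measure.
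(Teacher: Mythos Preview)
The paper does not itself prove this lemma; it only records that the proof can be found in~\cite[Lemma~4]{KristensenRindler10} (for BV-Young measures, with the BD-adaptation being line-by-line). Your overall architecture---mollification plus a diagonal argument for~(i), and a collar cut-off with the De~Giorgi averaging trick for~(ii)---is exactly the strategy of that reference, so in that sense you are on the right track.

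Part~(i) is fine as written.

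Part~(ii), however, contains a genuine gap in the cut-off estimate. You state that the pigeonhole shell $A_j^{(\ell^*)}$ satisfies
\[
  \norm{v_j-\tilde u_{k(j)}}_{\Lrm^1(A_j^{(\ell^*)})}\leq\frac{C}{M_j}
\]
with $C$ depending only on $\sup_j\bigl(\norm{v_j}_\BD+\norm{\tilde u_{k(j)}}_\BD\bigr)$, and then bound the commutator term by $(M_j/\delta_j)\cdot(C/\delta_j)=C/\delta_j$. With $C$ a fixed constant this quantity diverges as $\delta_j\todown 0$, and no choice of $M_j$ can rescue it---the $M_j$ has already cancelled. You flag this yourself as something that ``must be balanced'', but as stated it cannot be. The missing ingredient is that you have arranged $v_j\to u$ and $\tilde u_{k(j)}\to u$ strongly in $\Lrm^1(\Omega;\R^d)$, so in fact
\[
  \eps_j:=\norm{v_j-\tilde u_{k(j)}}_{\Lrm^1(\Omega)}\to 0,
\]
and the pigeonhole gives $\norm{v_j-\tilde u_{k(j)}}_{\Lrm^1(A_j^{(\ell^*)})}\leq\eps_j/M_j$. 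The commutator term is then bounded by $\eps_j/\delta_j$, which vanishes provided $\delta_j$ is chosen to go to zero more slowly than $\eps_j$ (e.g.\ $\delta_j=\eps_j^{1/2}$). This is precisely how the argument is closed in~\cite{KristensenRindler10}. Once you replace your fixed $C$ by the vanishing $\eps_j$, the remaining diagonal selection of $M_j,\delta_j,k(j)$ goes through, and the hypothesis $\lambda_\nu(\partial\Omega)=0$ is used, as you correctly indicate, to ensure that the mass $\abs{Ev_j}(C_{\delta_j})$ carried by the shrinking collar tends to zero.
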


The proof of this result can be found in~\cite[Lemma~4]{KristensenRindler10}.

\subsection{Tangent Young measures} \label{ssc:tanYM}

In order to carry out blow-up constructions involving Young measures, we will need localization principles for these objects, one at regular and one at singular points. These results should be considered complements to the theory of tangent measures and thus the Young measures obtained in the blow-up limit are called \term{tangent Young measures}.

Define $\BDY_\loc(\R^d)$ by replacing $\Ybf(\Omega;\R_\sym^{d \times d})$ and $\BD(\Omega)$ by their respective local counterparts. When working with $\BDY_\loc(\R^d)$, the appropriate space of integrands is $\Ebf_c(\R^d;\R_\sym^{d \times d})$, i.e., the set of all functions in $\Ebf(\R^d;\R_\sym^{d \times d})$ with (uniformly) compact support in the first argument.

The following two results are proved in~\cite{Rindler11}:

\begin{proposition}[Localization at regular points] \label{prop:localize_reg}
Let $\nu \in \BDY(\Omega)$ be a BD-Young measure. Then, for $\Lcal^d$-almost every $x_0 \in \Omega$ there exists a \term{regular tangent Young measure} $\sigma \in \BDY_\loc(\R^d)$ satisfying
\begin{align*}
  [\sigma] &\in \Tan([\nu],x_0),  &\sigma_y &= \nu_{x_0} \quad\text{a.e.,}   
  \\
  \lambda_\sigma &= \frac{\di \lambda_\nu}{\di \Lcal^d}(x_0) \, \Lcal^d  \in \Tan(\lambda_\nu,x_0),
    &\sigma_y^\infty &= \nu_{x_0}^\infty \quad\text{a.e.}
\end{align*}
In particular, for all bounded open sets $U \subset \R^d$ with $\Lcal^d(\partial U) = 0$, and all $h \in \Crm(\R^{d \times d})$ such that the recession function $h^\infty$ exists in the sense of~\eqref{eq:f_infty}, it holds that
\begin{equation*} \label{eq:loc_reg_ddpr}
  \ddprb{\ONE_U \otimes h, \sigma} = \biggl[ \dprb{h,\nu_{x_0}} + \dprb{h^\infty,\nu_{x_0}^\infty}
    \frac{\di \lambda_\nu}{\di \Lcal^d}(x_0) \biggr] \abs{U}.
\end{equation*}
\end{proposition}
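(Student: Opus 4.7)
The strategy is a blow-up at a suitably chosen regular point combined with a diagonal extraction from a generating sequence for $\nu$. As a preparatory step, I would select a set of full $\Lcal^d$-measure consisting of points $x_0 \in \Omega$ at which simultaneously: (a) the density $\rho(x) := \di\lambda_\nu/\di\Lcal^d(x)$ has a Lebesgue point; (b) $\lambda_\nu^s(B_r(x_0))/r^d \to 0$; (c) the scalar function $x \mapsto \dprb{f_k, \nu_x}$ has a Lebesgue point for every $f_k$ in a fixed countable dense subset of $\Ebf(\Rdds)$; and (d) the analogous statement holds for $x \mapsto \dprb{f_k^\infty, \nu_x^\infty}\rho(x)$. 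Since the parametrised-measure maps $x \mapsto \nu_x$ and $x\mapsto \nu_x^\infty$ are only weakly* measurable, the Besicovitch differentiation theorem must be applied to each of the countable family of scalar $\Lrm^1_\loc$ functions from (c) and (d); separability of $\Ebf(\Rdds)$ makes this both well-defined and sufficient for the later identification step.

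Next, fix a smooth generating sequence $(u_j)\subset\Crm^\infty(\Omega;\R^d)\cap\BD(\Omega)$ for $\nu$ (Lemma~\ref{lem:good_genseq}(i)). For each $R > 0$ and $r>0$ with $B_{rR}(x_0)\Subset\Omega$, define on $B_R$ the rescalings
\[
v_j^{r,R}(y) := r^{-1}\bigl(u_j(x_0+ry) - \omega_j^{r,R}(y)\bigr),
\]
where $\omega_j^{r,R}$ is a rigid deformation chosen via the BD Poincar\'{e}--Korn inequality so that $\|v_j^{r,R}\|_{\Lrm^1(B_R)}$ is uniformly bounded in $j$. A direct change of variables yields $(Ev_j^{r,R})(B) = r^{-d}(Eu_j)(x_0+rB)$, so $\sup_j |Ev_j^{r,R}|(B_R)$ is controlled by the uniformly bounded quantity $r^{-d}|Eu_j|(B_{rR}(x_0))$. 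Lemma~\ref{lem:compact} then produces (along a subsequence in $j$) a BD-Young measure $\nu^{x_0,r,R}\in\BDY(B_R)$ generated by $(Ev_j^{r,R})_j$, and by the same scaling identities this rescaled Young measure is precisely the pullback by $y\mapsto x_0+ry$ of the restriction of $\nu$ to $B_{rR}(x_0)$.

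I would then send $r\todown 0$: properties (a) and (b) ensure that the $\nu^{x_0,r,R}$ have uniformly bounded mass, so a further application of Lemma~\ref{lem:compact} gives $r_k\todown 0$ and $\sigma^R\in\Ybf(B_R;\Rdds)$ with $\nu^{x_0,r_k,R}\toweakstar\sigma^R$. A Cantor diagonal argument in $(j,k)$, exploiting smoothness of the $u_j$, yields a single BD-sequence generating $\sigma^R$, hence $\sigma^R\in\BDY(B_R)$; a further diagonal over $R\in\N$ produces the desired $\sigma\in\BDY_\loc(\R^d)$. To identify $\sigma$, test against $\varphi\otimes f_k$ with $\varphi\in\Crm_c(\R^d)$; properties (c), (d) of $x_0$ imply
\[
\ddprb{\varphi\otimes f_k,\sigma} = \dprb{f_k,\nu_{x_0}}\!\int\!\varphi\dd y + \rho(x_0)\dprb{f_k^\infty,\nu^\infty_{x_0}}\!\int\!\varphi\dd y,
\]
which forces $\sigma_y\equiv\nu_{x_0}$, $\sigma_y^\infty\equiv\nu_{x_0}^\infty$ and $\lambda_\sigma=\rho(x_0)\Lcal^d$. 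Density of $\{f_k\}$ in $\Ebf(\Rdds)$, together with the uniform $\Ebf$-bound $|\ddprb{\varphi\otimes f,\sigma}|\lesssim \|\varphi\|_\infty\|Rf\|_\infty\ddprb{\ONE\otimes(1+|\cdot|),\sigma}$, extends the identity to all admissible integrands, including $\ONE_U\otimes h$ for $U$ with $\Lcal^d(\partial U)=0$. The memberships $[\sigma]\in\Tan([\nu],x_0)$ and $\lambda_\sigma\in\Tan(\lambda_\nu,x_0)$ are automatic from the scaling construction.

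The hard part will be coordinating the two limits $j\to\infty$ (generation) and $r\todown 0$ (blow-up) so that the resulting $\sigma$ is genuinely generated by a BD-sequence rather than being merely a weak* limit of Young measures. Handling this requires the smoothness from Lemma~\ref{lem:good_genseq}(i), which supplies pointwise information along diagonals, combined with uniform-in-$j$ Poincar\'{e}--Korn control on the rigid-motion corrections $\omega_j^{r,R}$. A second, less severe difficulty is the lack of a direct Lebesgue differentiation theorem for weakly* measurable measure-valued maps: this is circumvented by performing scalar differentiation against each element of the countable dense family $\{f_k\}\subset\Ebf(\Rdds)$ and then upgrading to all of $\Ebf(\Rdds)$ through the uniform $\Ebf$-bound mentioned above.
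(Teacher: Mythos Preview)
The paper does not give a proof of this proposition; it merely states the result and refers to~\cite{Rindler11} for the argument. Your blow-up strategy---selecting a full-measure set of Lebesgue points for the countable family of scalar functions $x\mapsto\dpr{f_k,\nu_x}$ and $x\mapsto\rho(x)\dpr{f_k^\infty,\nu_x^\infty}$, rescaling a smooth generating sequence around such a point, and coordinating the limits $j\to\infty$ and $r\downarrow 0$ by a diagonal argument---is exactly the method employed in that reference (see also~\cite[Chapter~12]{Rindler18book} for the BV analogue), so there is nothing substantive to contrast.

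One small simplification: once $(Eu_j)$ generates $\nu$, the rescaled sequence $(Ev_j^{r,R})_j$ automatically generates the pullback Young measure $\nu^{x_0,r,R}$; no subsequence in $j$ is required at that stage, and Lemma~\ref{lem:compact} is only needed when passing $r\downarrow 0$. Otherwise your outline is correct, including the point you flag as the genuine difficulty, namely producing an honest BD generating sequence for the limit $\sigma$ rather than just a weak* limit in $\Ybf$.
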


\begin{proposition}[Localization at singular points] \label{prop:localize_sing}
Let $\nu \in \BDY(\Omega)$ be a BD-Young measure. Then, for $\lambda_\nu^s$-almost every $x_0 \in \Omega$, there exists a \term{singular tangent Young measure} $\sigma \in \BDY_\loc(\R^d)$ satisfying
\begin{align*}
  [\sigma] &\in \Tan([\nu],x_0),  &\sigma_y &= \delta_0 \quad\text{a.e.,} 
\\
  \lambda_\sigma &\in \Tan(\lambda_\nu^s,x_0) \setminus \{0\}, &\sigma_y^\infty &= \nu_{x_0}^\infty
    \quad\text{$\lambda_\sigma$-a.e.} 
\end{align*}
In particular, for all bounded open sets $U \subset \R^d$ with $(\Lcal^d + \lambda_\sigma)(\partial U) = 0$ and all positively $1$-homogeneous $g \in \Crm(\R_\sym^{d \times d})$ it holds that
\begin{equation*} \label{eq:loc_sing_ddpr}
  \ddprb{\ONE_U \otimes g, \sigma} = \dprb{g,\nu_{x_0}^\infty} \, \lambda_\sigma(U).
\end{equation*}
\end{proposition}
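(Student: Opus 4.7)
The plan is to realise $\sigma$ as a double limit: first blow $\nu$ up around $x_0$ on scale $r_k \downarrow 0$ to produce approximating Young measures $\sigma^k$, and then pass $k \to \infty$. The choice of $x_0$ combines three independent selection principles. First, by Besicovitch differentiation and $\lambda_\nu^s \perp \Lcal^d$, at $\lambda_\nu^s$-a.e.\ $x_0$ one has $r^d / \lambda_\nu^s(B_r(x_0)) \to 0$ (so $\lambda_\nu^a$ is negligible against $\lambda_\nu^s$ at scale $r$). Second, Preiss' theorem applied to $\lambda_\nu^s$ furnishes radii $r_k \downarrow 0$ and normalisation $c_k \sim 1/\lambda_\nu^s(B_{r_k}(x_0))$ with $c_k T^{x_0, r_k}_\# \lambda_\nu^s \toweakstar \lambda_\sigma \ne 0$. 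Third, using the weak* measurability of $x \mapsto \nu_x^\infty$ and separability of $\Crm(\partial \Bbb^{d(d+1)/2})$, the Besicovitch--Lebesgue differentiation theorem applied to the Radon measure $\lambda_\nu$ yields that $\lambda_\nu$-a.e.\ (hence $\lambda_\nu^s$-a.e.) $x_0$ is a joint Lebesgue point,
\[
  \lim_{r \to 0} \frac{1}{\lambda_\nu(B_r(x_0))} \int_{B_r(x_0)} \absb{\dprb{h,\nu_x^\infty} - \dprb{h,\nu_{x_0}^\infty}} \di \lambda_\nu(x) = 0,
\]
simultaneously for all $h$ in a countable dense subset of $\Crm(\partial \Bbb^{d(d+1)/2})$.

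Taking a smooth generating sequence $(u_j) \subset \BD(\Omega) \cap \Crm^\infty$ for $\nu$ via Lemma~\ref{lem:good_genseq}(i), I would then set
\[
  v_j^k(y) := c_k r_k^{d-1} \, u_j(x_0 + r_k y),
\]
so that $E v_j^k = c_k T^{x_0, r_k}_\# E u_j$. For $\phi \otimes f \in \Ebf_c(\R^d;\Rdds)$ a direct change of variables gives the identity
\[
  \ddprb{\phi \otimes f, E v_j^k} = r_k^{-d} \, \ddprb{\phi_k \otimes f_k, E u_j},
\]
with $\phi_k(x) := \phi((x-x_0)/r_k)$ and $f_k(A) := f(c_k r_k^d A)$; crucially $f_k^\infty = c_k r_k^d f^\infty$, which is precisely what makes the prefactors in front of the regular and singular contributions combine correctly. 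Sending $j \to \infty$ using $E u_j \toY \nu$ and extracting via Lemma~\ref{lem:compact} produces $\sigma^k \in \BDY_\loc(\R^d)$ with
\begin{align*}
  \ddprb{\phi \otimes f, \sigma^k}
  &= \int \phi(y) \dprb{f(c_k r_k^d \frarg), \nu_{x_0+r_k y}} \di y \\
  &\qquad + \int \phi(y) \dprb{f^\infty, \nu_{x_0+r_k y}^\infty} \di[c_k T^{x_0,r_k}_\# \lambda_\nu](y).
\end{align*}

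Now I would pass $k \to \infty$. Since $c_k r_k^d \to 0$ by selection~(1), $f(c_k r_k^d A) \to f(0)$ pointwise; combined with the linear growth bound $|f(A)| \le C(1+|A|)$ and the $\Lrm^1$-integrability of $x \mapsto \dprb{|\frarg|,\nu_x}$, a dominated convergence argument shows the first term tends to $f(0) \int \phi \di y = \int \phi \dprb{f, \delta_0} \di y$. For the second, selections~(1) and~(2) jointly yield $c_k T^{x_0,r_k}_\# \lambda_\nu \toweakstar \lambda_\sigma$ (the $\lambda_\nu^a$-part vanishes because $\lambda_\nu^a(B_{r_k R}(x_0))/\lambda_\nu^s(B_{r_k}(x_0)) \to 0$), and the Lebesgue-point property~(3) permits replacing $\nu_{x_0+r_k y}^\infty$ by $\nu_{x_0}^\infty$ under the weak* limit, giving $\dprb{f^\infty, \nu_{x_0}^\infty} \int \phi \di \lambda_\sigma$. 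This identifies $\sigma$ as the triple $(\delta_0, \lambda_\sigma, \nu_{x_0}^\infty)$. A standard diagonal extraction based on the metrisability of weak*-convergence on norm-bounded subsets of $\Ybf_\loc$ yields indices $j(k)$ with $E v_{j(k)}^k \toY \sigma$, showing $\sigma \in \BDY_\loc(\R^d)$. The tangent property $[\sigma] \in \Tan([\nu],x_0)$ follows by taking the same limit in the identity $[E v_j^k] = c_k T^{x_0,r_k}_\# [E u_j]$, and~\eqref{eq:loc_sing_ddpr} is then the specialisation of $\sigma$ to positively $1$-homogeneous $g$ (so $g(0) = 0$, $g^\infty = g$), approximating $\ONE_U$ by continuous cut-offs, which is legitimate since $(\Lcal^d + \lambda_\sigma)(\partial U) = 0$.

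The main obstacle will be the regular-part limit: because $\nu_{x_0+r_k y}$ is a probability measure on all of $\Rdds$ and $f$ has only linear growth (not boundedness), the pointwise convergence $f(c_k r_k^d A) \to f(0)$ must be upgraded to a convergence of integrals by a careful tail estimate, splitting into $\{|A| \le R\}$ (where uniform continuity of $f$ on compacts applies) and $\{|A| > R\}$ (where the smallness $c_k r_k^d \to 0$ combines with the $\Lrm^1$-control of $\dprb{|\frarg|,\nu_x}$ to absorb the tail). A secondary subtlety is verifying the joint Lebesgue-point property against the singular measure $\lambda_\nu$: this requires invoking Besicovitch differentiation for general Radon measures, together with separability to pick a countable dense family of test functions on $\partial\Bbb^{d(d+1)/2}$ simultaneously for $\lambda_\nu^s$-a.e.\ $x_0$.
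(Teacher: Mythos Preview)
The paper does not give its own proof of this proposition; it simply records the statement and cites~\cite{Rindler11}. Your blow-up scheme --- select a good $x_0$ via Besicovitch differentiation, Preiss' tangent-measure theorem, and a Lebesgue-point argument for $x\mapsto\nu_x^\infty$; rescale a smooth generating sequence; compute the limit of $\ddprb{\phi\otimes f,\sigma^k}$ as $k\to\infty$; and diagonalise to exhibit a BD generating sequence for $\sigma$ --- is exactly the strategy used in that reference, and your computations (in particular the scaling identity $f_k^\infty=c_kr_k^d f^\infty$) are correct.

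One point to sharpen: for the regular term you argue that $c_kr_k^d\to0$ together with the $\Lrm^1$-bound on $x\mapsto\dprb{\abs{\frarg},\nu_x}$ suffices to kill the tail. In fact you need slightly more, namely that at $\lambda_\nu^s$-almost every $x_0$ the absolutely continuous measure $\dprb{\abs{\frarg},\nu_x}\,\Lcal^d(\di x)$ is negligible against $\lambda_\nu^s$ in the blow-up, i.e.,
\[
\frac{1}{\lambda_\nu^s(B_{r}(x_0))}\int_{B_{rR}(x_0)}\dprb{\abs{\frarg},\nu_x}\dd x\;\longrightarrow\;0
\qquad\text{for every }R>0.
\]
This is again Besicovitch differentiation (using $\dprb{\abs{\frarg},\nu_x}\,\Lcal^d\perp\lambda_\nu^s$), and should be added to your list of selection principles for $x_0$; with it, your tail-splitting argument goes through. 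The implicit ``doubling'' comparison $\lambda_\nu^s(B_{rR}(x_0))\lesssim\lambda_\nu^s(B_{r}(x_0))$ along the chosen sequence of radii is part of the standard Preiss machinery and is needed here as well.
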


\subsection{Good blow-ups for Young measures}

By exploiting the results in Section~\ref{sec:singularities}, in particular Theorem~\ref{thm:very_good_blowups}, one can show that for  almost all singular points of a BD-Young measure there is a tangent Young measure such that  the underlying deformation has a one-directional structure,  see~\cite[Lemma~2.14]{DePhilippisRindler17}.

\begin{theorem}[Very good singular blow-ups] \label{thm:very_good_blowupsII}
Let $\nu \in \BDY(\Omega)$ be a BD-Young measure. Then, for $\lambda_\nu^s$-almost every $x_0 \in \Omega$, there exists a singular tangent Young measure $\sigma \in \BDY_\loc(\R^d)$ such that $[\sigma] = Ew$ for some $w \in \BD_\loc(\R^d)$ of the form
\[
  w(x) = \eta G(x \cdot \xi) + A(x).
\]
Here, $\xi, \eta \in \R^d \setminus \{0\}$,   $G \in \BV_\loc(\R)$, and $A \colon \R^d \to \R^d$ is an affine map.
\end{theorem}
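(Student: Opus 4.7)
The plan is a two-stage blow-up: first, via Proposition~\ref{prop:localize_sing}, pass from $\nu$ to a tangent Young measure whose underlying deformation already enjoys a strong BD-rigidity decomposition; then, via a second tangent extraction, isolate a single one-directional profile. At any $\lambda_\nu^s$-generic point $x_0$, Proposition~\ref{prop:localize_sing} produces a singular tangent Young measure $\tilde\sigma\in\BDY_\loc(\R^d)$ with $\tilde\sigma_y=\delta_0$ a.e., $\tilde\sigma_y^\infty=\nu_{x_0}^\infty$ for $\lambda_{\tilde\sigma}$-a.e.\ $y$, and $\lambda_{\tilde\sigma}\in\Tan(\lambda_\nu^s,x_0)\setminus\{0\}$. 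Any underlying deformation $\tilde w\in\BD_\loc(\R^d)$ of $\tilde\sigma$ then satisfies $E\tilde w=[\tilde\sigma]=P\,\lambda_{\tilde\sigma}$ with the constant matrix $P:=\dpr{\id,\nu_{x_0}^\infty}\in\Rdds$.

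The crucial step is to show that $P$ is rank-one symmetric or zero at $\lambda_\nu^s$-a.e.\ $x_0$. Let $u$ be an underlying deformation of $\nu$ and decompose $[\nu]^s=P(x)\lambda_\nu^s$ with $P(x):=\dpr{\id,\nu_x^\infty}$. On the measurable set $\{P\ne 0\}$ the measures $\lambda_\nu^s$ and $|E^s u|$ are mutually absolutely continuous, so Theorem~\ref{thm:AlbertiBD} applied to $u$ forces $P=a\odot b$ for some $a,b\in\R^d$. In the trivial case $P=0$, $\tilde w$ is a rigid deformation, so $\tilde w=:A$ already has the claimed form with $G\equiv 0$ and any $\xi,\eta\ne 0$. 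When $P=a\odot b\ne 0$, Theorem~\ref{thm:BD_tangent_decomp} (which refines Theorem~\ref{thm:general rigidity} by exploiting the positivity of $\lambda_{\tilde\sigma}$) gives
\[
\tilde w(x)=a\,H_1(x\cdot b)+b\,H_2(x\cdot a)\ \ \text{if }a\ne\pm b, \qquad \tilde w(x)=a\,H(x\cdot a)\ \ \text{if }a=\pm b,
\]
for suitable $H,H_1,H_2\in\BV_\loc(\R)$. The second case already has the target form, with $\xi=\eta=a$, $G=H$ and $A\equiv 0$.

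In the remaining case $a\ne\pm b$, I take a further tangent Young measure $\sigma$ of $\tilde\sigma$ at a point $y_0\in\R^d$ chosen to separate the two scalar profiles. By the iterated tangent principle for generalized Young measures (the Young-measure analogue of Preiss's theorem \cite[Theorem~14.16]{Mattila95book}, obtained by a diagonal extraction along the two rescaling sequences), $\sigma$ is itself a singular tangent Young measure of $\nu$ at $x_0$. If $H_1'$ and $H_2'$ are both purely absolutely continuous, any joint Lebesgue point $y_0$ yields an affine underlying deformation $w=A$, compatible with $G\equiv 0$. Otherwise at least one of $H_1',H_2'$ has a nontrivial singular part; since $H_1'(x\cdot b)$ and $H_2'(x\cdot a)$ are supported on transverse families of hyperplanes (as $a,b$ are linearly independent), their singular parts are mutually singular as measures on $\R^d$. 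Choosing $y_0$ at a generic singular point of, say, $H_1'$, together with a Lebesgue point of $H_2'$, kills the $H_2$-contribution in the blow-up and produces $w(x)=a\,G(x\cdot b)+A(x)$, which is of the required form with $\eta=a$, $\xi=b$ and $G\in\BV_\loc(\R)$.

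The main obstacle is the rigorous formulation of the iterated tangent principle in the generalized Young measure setting: besides running the classical Preiss argument for the concentration measure $\lambda_\sigma$, one must diagonalize simultaneously the oscillation profile $y\mapsto\sigma_y$ and the concentration-angle profile $y\mapsto\sigma_y^\infty$, and verify that the mutual singularity of the $H_1'(x\cdot b)$- and $H_2'(x\cdot a)$-contributions is preserved by the rescaling. These technicalities are handled in detail in \cite[Lemma~2.14]{DePhilippisRindler17}.
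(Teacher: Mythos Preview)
Your proposal is correct and follows essentially the same two-stage blow-up strategy as the paper: a first localization (here via Proposition~\ref{prop:localize_sing}, yielding $E\tilde w = (a\odot b)\lambda_{\tilde\sigma}$ with a positive $\lambda_{\tilde\sigma}$, whence the simplified rigidity decomposition of Theorem~\ref{thm:BD_tangent_decomp}), followed by a second blow-up that separates the $H_1'(x\cdot b)$ and $H_2'(x\cdot a)$ contributions, together with the Preiss-type iterated-tangents principle at the Young-measure level. Your identification of the main technical burden---carrying out the iterated tangent argument for generalized Young measures rather than just for measures---and the deferral to~\cite[Lemma~2.14]{DePhilippisRindler17} match exactly what the paper does.
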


We remark that the previous result also holds for (possibly non-BD) Young measures $\nu \in \Ybf(\Omega;\Rdds)$ with the property that $ [\nu] \restrict \Omega = Eu$ for some $u \in \BD(\Omega)$.

Using the previous theorem, the main result of~\cite{DePhilippisRindler17} characterizes completely all BD-Young measures:

\begin{theorem} \label{thm:BDY_charact}
Let $\nu \in \Ybf(\Omega;\Rdds)$ be a (generalized) Young measure. Then, $\nu$ is a BD-Young measure, $\nu \in \BDY(\Omega)$, if and only if there exists $u \in \BD(\Omega)$ with $[\nu] \restrict \Omega = Eu$ and for all symmetric-quasiconvex $h \in \Crm(\R_\sym^{d \times d})$ with linear growth at infinity, the Jensen-type inequality
\[
  h \biggl( \dprb{\id,\nu_x} + \dprb{\id,\nu_x^\infty} \frac{\di \lambda_\nu}{\di \Lcal^d}(x) \biggr)
    \leq \dprb{h,\nu_x} + \dprb{h^\#,\nu_x^\infty} \frac{\di \lambda_\nu}{\di \Lcal^d}(x).
\]
holds at $\Lcal^d$-almost every $x \in \Omega$.
\end{theorem}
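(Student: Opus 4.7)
My plan is to derive the pointwise Jensen inequality from global lower semicontinuity combined with localization. Suppose $Eu_j \toY \nu$ for a sequence $(u_j)\subset\BD(\Omega)$; after extraction $u_j\to u$ in $\Lrm^1$ with $Eu=[\nu]\restrict\Omega$. For any symmetric-quasiconvex $h$ of linear growth we have $SQh=h$, and Theorem~\ref{thm:lsc-bd} (or just its lower semicontinuity part) gives the global inequality
\[
\ddprb{h,\nu} \;=\; \liminf_{j\to\infty}\int_\Omega h(\Ecal u_j)\dd x \;\ge\; \int_\Omega h(\Ecal u)\dd x + \int_\Omega h^{\#}\!\bigl(\tfrac{\di E^s u}{\di |E^s u|}\bigr)\dd |E^s u|.
\]
To pass to a pointwise statement, I would apply the localization principles. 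At $\Lcal^d$-a.e.\ regular $x_0$, Proposition~\ref{prop:localize_reg} produces a homogeneous regular tangent Young measure $\sigma$ whose barycenter on any ball $B$ is a multiple of Lebesgue measure with constant density $\bar A := \dprn{\id,\nu_{x_0}}+\dprn{\id,\nu_{x_0}^\infty}\tfrac{\di\lambda_\nu}{\di\Lcal^d}(x_0)$. Using Lemma~\ref{lem:good_genseq}(ii) I can pick a generating sequence for $\sigma$ with affine boundary values $\bar A\cdot y$; symmetric-quasiconvexity of $h$ applied to this sequence and the above inequality for $\sigma$ yield
\[
h(\bar A) \;\le\; \dprb{h,\nu_{x_0}} + \dprb{h^\#,\nu_{x_0}^\infty}\tfrac{\di\lambda_\nu}{\di\Lcal^d}(x_0).
\]
At $\lambda_\nu^s$-a.e.\ singular $x_0$, I would combine Proposition~\ref{prop:localize_sing} with the very-good-blow-up Theorem~\ref{thm:very_good_blowupsII} to obtain a singular tangent $\sigma$ whose underlying deformation is one-directional, $w(x)=\eta G(x\cdot\xi)+A(x)$. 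Slicing in the direction $\xi$ reduces the Jensen inequality to the one-dimensional convexity~\eqref{eq:sym_qc_aodotb_conv} along the rank-one matrix $\eta\odot\xi\in\Lambda_{\Curl\Curl}$.

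\textbf{Sufficiency.} Here I would argue by Hahn-Banach separation in the weak*-dual $\Ebf(\Omega;\Rdds)^*$. Set
\[
\Kcal(u):=\setb{\mu\in\BDY(\Omega)}{[\mu]\restrict\Omega=Eu},
\]
which by the compactness Lemma~\ref{lem:compact} and a diagonal extraction using Lemma~\ref{lem:good_genseq} is convex and weakly* closed. Assume $\nu$ satisfies the Jensen hypothesis and $[\nu]\restrict\Omega=Eu$, but $\nu\notin\Kcal(u)$. Then there exists $f\in\Ebf(\Omega;\Rdds)$ separating them strictly:
\[
\ddprb{f,\nu}\;<\;\inf_{\mu\in\Kcal(u)}\ddprb{f,\mu}\;=\;\Fcal_*[u],
\]
the last equality being the identification of the infimum over BD-generators with the relaxed functional, which by Theorem~\ref{thm:lsc-bd} equals
\[
\Fcal_*[u]=\int_\Omega SQf(\Ecal u)\dd x+\int_\Omega (SQf)^{\#}\!\bigl(\tfrac{\di E^s u}{\di |E^s u|}\bigr)\dd |E^s u|.
\]
On the other hand $SQf\le f$ and $SQf$ is symmetric-quasiconvex with linear growth (Lemma~\ref{lem:Qh_QC}). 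Applying the Jensen hypothesis with $h:=SQf$ pointwise and integrating against $\Lcal^d+\lambda_\nu^s$, together with $\ddprb{f,\nu}\ge\ddprb{SQf,\nu}$ (here $(SQf)^\#\le f^{\infty}$ on concentrations), yields $\ddprb{f,\nu}\ge\Fcal_*[u]$, contradicting the separation. Hence $\nu\in\Kcal(u)$.

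\textbf{Main obstacle.} The crux is the identification $\inf_{\Kcal(u)}\ddprb{f,\cdot}=\Fcal_*[u]$: it requires that the relaxed value be realizable in the limit along BD-generators whose concentration part produces exactly the term $(SQf)^\#$, not only the possibly smaller $(SQf)^\infty$. This is where the singular structure results of Section~\ref{sec:singularities} are decisive: Theorem~\ref{thm:AlbertiBD} restricts the directions of concentrations to the rank-one cone $\Lambda_{\Curl\Curl}=\{a\odot b\}$, and Theorem~\ref{thm:very_good_blowupsII} permits the construction of generating sequences with prescribed one-directional singular profile along those directions, at which the sharp upper bound is precisely $(SQf)^\#$. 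A secondary technicality is that $SQf\notin\Ebf$ in general, so $\ddprb{SQf,\nu}$ must be interpreted using $(SQf)^\#$ on the concentration part; this is consistent with the statement and is handled by approximation from above by integrands in $\Ebf$.
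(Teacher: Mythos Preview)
Your overall strategy---localization for necessity, Hahn--Banach separation for sufficiency---matches the approach in~\cite{DePhilippisRindler17}, but the sufficiency argument as written has a real gap, and your ``main obstacle'' paragraph misidentifies where the structure theory enters.

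The gap is the sentence ``integrating against $\Lcal^d+\lambda_\nu^s$''. The Jensen hypothesis of Theorem~\ref{thm:BDY_charact} is assumed \emph{only} at $\Lcal^d$-a.e.\ $x$; no singular Jensen inequality is part of the hypotheses, and that is precisely the point of the theorem. To bound $\ddprb{SQf,\nu}$ from below by $\Fcal_*[u]$ you also need $(SQf)^\#\bigl([\nu_x^\infty]\bigr) \leq \dprb{(SQf)^\#,\nu_x^\infty}$ at $\lambda_\nu^s$-a.e.\ $x$, and this must be \emph{proved}. The decisive mechanism (spelled out in the paper's proof of Theorem~\ref{thm:auto_Jensen}) is: since $[\nu]\restrict\Omega = Eu$ is a hypothesis, Theorem~\ref{thm:AlbertiBD} forces $[\nu_x^\infty] = a(x)\odot b(x)$ for $\lambda_\nu^s$-a.e.\ $x$, and then the Kirchheim--Kristensen convexity result, Theorem~\ref{thm:KirchheimKristensen}, shows that any positively $1$-homogeneous symmetric-quasiconvex function is genuinely convex at such matrices, so the classical Jensen inequality applies to $\nu_x^\infty$. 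That is where Theorem~\ref{thm:AlbertiBD} enters the sufficiency proof---not in constructing recovery sequences for elements of $\Kcal(u)$, which is already packaged into Theorem~\ref{thm:lsc-bd}. Relatedly, your necessity discussion at singular points is misplaced: Theorem~\ref{thm:BDY_charact} only asserts the regular Jensen inequality, so what you sketched there is really a route to Theorem~\ref{thm:auto_Jensen}, not part of necessity. A smaller technical issue is that the separating $f$ depends on $x$ and need not be coercive, while Theorem~\ref{thm:lsc-bd} is stated for autonomous coercive integrands; the identification $\inf_{\Kcal(u)}\ddprb{f,\cdot}=\Fcal_*[u]$ therefore requires additional work (an additive shift by a multiple of $1+|A|$ and a freezing-of-$x$ argument).
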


This result is the generalization to BD of the so-called Kinderlehrer--Pedregal theorem characterizing classical Young measures (i.e., $\lambda_\nu = 0$) generated by sequences of gradients~\cite{KinderlehrerPedregal91,KinderlehrerPedregal94} and the characterization of generalized sequences generated by BV-derivatives, first established in~\cite{KristensenRindler10} and refined in~\cite{Rindler14,KirchheimKristensen11}.

We remark that the use of the generalized recession function $h^\#$ can in general not be avoided since, as discussed above,  not every symmetric-quasiconvex function with linear growth at infinity has a (strong) recession function (and one needs to test with all those; but see~\cite[Theorem~6.2]{KirchheimKristensen16} for a possible restriction on the class of test integrands). 

Note that the above theorem does not impose any constraint on the singular part (i.e., $\lambda_\nu^s$ and the corresponding $\nu_x^\infty$) of the Young measure $\nu$, except for the fact that the barycenter $[\nu]$'s polar is of the form \(a\odot b\) at almost every singular point (which follows by the existence of an underlying deformation and Theorem~\ref{thm:AlbertiBD}). It is a remarkable fact that this is enough to also ensure the validity of the following \emph{singular} Jensen-type inequality:

\begin{theorem} \label{thm:auto_Jensen}
For all $\nu \in \BDY(\Omega)$ and for all symmetric-quasiconvex $h \in \Crm(\R_\sym^{d \times d})$ with linear growth at infinity, it holds that
\begin{align*}
  h^\# \bigl( \dprb{\id,\nu_x^\infty} \bigr) &\leq \dprb{h^\#,\nu_x^\infty}
\end{align*}
at $\lambda_\nu^s$-almost every $x \in \Omega$.
\end{theorem}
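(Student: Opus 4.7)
The plan is to derive the singular Jensen inequality by sandwiching $\int_{B_R} h(\Ecal u_j)\,\dd x$, along a smooth generating sequence $(u_j)$ of a carefully chosen singular tangent BD-Young measure $\sigma$ to $\nu$ at $x_0$, between a lower bound from the weak$^*$ lower semicontinuity of the symmetric-quasi\-convex relaxed functional (Theorem~\ref{thm:lsc-bd}) and an upper bound from the Young-measure action against the generalized recession $h^\#$. The one-dimensional rigidity afforded by the very good blow-up (Theorem~\ref{thm:very_good_blowupsII}) is what makes both bounds computable in closed form.

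Fix $\nu\in\BDY(\Omega)$ and a symmetric-quasiconvex $h$ with linear growth; set $\mu:=\nu_{x_0}^\infty$ and $M:=\dprb{\id,\mu}$. At $\lambda_\nu^s$-a.e.\ $x_0$, combine Proposition~\ref{prop:localize_sing} with Theorem~\ref{thm:very_good_blowupsII} (iterating at a $|D^sG|$-a.e.\ point in $\R$ to eliminate the Lebesgue part of $DG$ and to select a definite sign for $D^sG$) to obtain a singular tangent $\sigma\in\BDY_\loc(\R^d)$ with $\sigma_y=\delta_0$ $\Lcal^d$-a.e., $\sigma_y^\infty=\mu$ $\lambda_\sigma$-a.e., and $[\sigma]=Ew$ where $w(x)=\eta G(x\cdot\xi)+A(x)$ has $G\in\BV_\loc(\R)$ purely singular with $D^sG\geq 0$ and $A$ affine. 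Since $\sigma_y=\delta_0$ forces $[\sigma]^a=0$, matching $[\sigma]=M\lambda_\sigma$ with the explicit form of $Ew$ yields $\Ecal A=0$ (so $A$ is a rigid deformation) and $M$ a nonnegative multiple of $\eta\odot\xi$; in particular $\frac{\di E^sw}{\di|E^sw|}=M/|M|$ and $|E^sw|=|M|\lambda_\sigma$.

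Now fix $R>0$ with $\lambda_\sigma(\partial B_R)=0$ and apply Lemma~\ref{lem:good_genseq}(ii) to obtain a smooth generating sequence $(u_j)$ for $\sigma$ with $u_j|_{\partial B_R}=w|_{\partial B_R}$, so that $u_j\toweakstar w$ in $\BD(B_R)$. The weak$^*$ lower semicontinuity of the sym-quasi\-convex relaxed functional (Theorem~\ref{thm:lsc-bd} applied to $h$, which is already sym-quasiconvex so that $SQh=h$; coercivity can be enforced by adding $\epsilon(1+|A|)$ and letting $\epsilon\to 0$) gives
\[
\liminf_{j\to\infty}\int_{B_R} h(\Ecal u_j)\,\dd x\;\geq\; h(0)\,|B_R|+h^\#(M)\,\lambda_\sigma(\bar B_R),
\]
where the rightmost singular integral is evaluated using the $1$-homogeneity of $h^\#$ and $|E^sw|=|M|\lambda_\sigma$. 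For the converse bound, approximate $h$ from above by a decreasing family $(h_k)$ of continuous symmetric-quasiconvex integrands whose strong recession functions $h_k^\infty$ exist and descend pointwise to $h^\#$; since each $h_k\in\Ebf(B_R;\Rdds)$, the standard Young-measure convergence gives $\int_{B_R} h_k(\Ecal u_j)\,\dd x\to h_k(0)|B_R|+\dprb{h_k^\infty,\mu}\lambda_\sigma(\bar B_R)$, and letting $k\to\infty$ by monotone convergence yields
\[
\limsup_{j\to\infty}\int_{B_R} h(\Ecal u_j)\,\dd x\;\leq\; h(0)\,|B_R|+\dprb{h^\#,\mu}\,\lambda_\sigma(\bar B_R).
\]
Combining the two bounds and dividing by $\lambda_\sigma(\bar B_R)>0$ gives exactly $h^\#(M)\leq\dprb{h^\#,\mu}$.

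The main obstacle is constructing the upper-approximating family $(h_k)$: because $h^\#$ is in general only upper semicontinuous and is not itself a strong recession, one must perform a joint truncation-and-smoothing procedure that preserves symmetric-quasi\-convexity at every stage while ensuring the monotone descent $h_k^\infty\searrow h^\#$. A secondary subtlety is the iterated blow-up that simultaneously removes the Lebesgue part of $DG$ and selects a sign for $D^sG$; this exploits the one-dimensional rigidity of $w$ from Theorem~\ref{thm:very_good_blowupsII} together with the classical $\BV$-theory in dimension one, plus the fact (via Preiss's theorem) that tangents of tangents are tangents.
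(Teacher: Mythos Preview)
Your approach is workable in outline but takes a substantially longer route than the paper. The paper's proof is essentially two lines once Theorem~\ref{thm:AlbertiBD} and the Kirchheim--Kristensen theorem (Theorem~\ref{thm:KirchheimKristensen}) are in hand: since $\nu\in\BDY(\Omega)$ has an underlying deformation $u\in\BD(\Omega)$, Theorem~\ref{thm:AlbertiBD} gives $[\nu_x^\infty]=a(x)\odot b(x)$ at $\lambda_\nu^s$-a.e.\ $x$; and since $h^\#$ is positively $1$-homogeneous and symmetric-quasiconvex (the latter follows from the symmetric-quasiconvexity of $h$ via reverse Fatou, whence $h^\#$ is also continuous), Theorem~\ref{thm:KirchheimKristensen} says $h^\#$ is \emph{convex} at every $a\odot b$, so the classical Jensen inequality applied to the probability measure $\nu_x^\infty$ yields the result directly. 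No blow-ups, no tangent Young measures, no lower-semicontinuity machinery are needed.

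Your route via Theorem~\ref{thm:lsc-bd}, very good blow-ups, and approximation from above can be made to work, but two remarks are in order. First, the ``main obstacle'' you flag---preserving symmetric-quasiconvexity in the upper approximants $h_k$---is a red herring: symmetric-quasiconvexity of $h_k$ is nowhere used in your upper bound (only Young-measure convergence for $h_k\in\Ebf$ is needed), and without that constraint the construction of $h_k\geq h$ with $h_k\in\Ebf$, $h_k\searrow h$, $h_k^\infty\searrow h^\#$ is standard (approximate the upper-semicontinuous extension of $Rh$ to $\cl{\Bbb^N}$ from above by continuous functions and pull back via $R^{-1}$). Second, you should verify that your invocation of Theorem~\ref{thm:lsc-bd} is not circular; in the paper that theorem is proved via the blow-up method and Theorem~\ref{thm:very_good_blowups}, not via the present result, so you are safe---but the dependency deserves to be made explicit. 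The upshot is that your argument trades the Kirchheim--Kristensen convexity theorem for the full relaxation theorem plus iterated tangent constructions, which is not a simplification.
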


The key step to proving the preceding theorem  is a surprising convexity property of  $1$-homogeneous  symmetric-quasiconvex functions at matrices of the form \(a \odot b\)  proved by  Kirchheim--Kristensen in~\cite{KirchheimKristensen16}:

\begin{theorem} \label{thm:KirchheimKristensen}
Let $h^\infty \colon \Rdds \to \R$ be positively $1$-homogeneous and symmetric-quasiconvex. Then, $h^\infty$ is convex at every matrix $a \odot b$ for $a, b \in \R^d$, that is, there exists an affine function $g \colon \Rdds \to \R$ with
\[
  h^\infty(a \odot b) = g(a \odot b)  \qquad\text{and}\qquad
  h^\infty \geq g.
\]
\end{theorem}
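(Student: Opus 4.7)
The plan is to reduce the claim to showing that the convex envelope $h^{**}$ of $h^\infty$ satisfies $h^{**}(a\odot b)=h^\infty(a\odot b)$; the desired affine function $g$ then arises as any supporting affine functional to the convex function $h^{**}$ at the point $a\odot b$, which exists by standard convex analysis (since $h^{**}$ is convex and finite on all of $\Rdds$) and automatically satisfies $g\le h^{**}\le h^\infty$ with equality at $a\odot b$.

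To establish the crucial equality $h^{**}(a\odot b)=h^\infty(a\odot b)$ I would argue by contradiction. If it fails, Carath\'eodory's theorem applied in the epigraph yields a finite convex combination
\[
a\odot b = \sum_{i=1}^N \lambda_i M_i, \qquad \lambda_i>0, \quad \sum_i \lambda_i = 1, \quad M_i \in \Rdds,
\]
with $\sum_i \lambda_i h^\infty(M_i) < h^\infty(a\odot b) - 2\eps$ for some $\eps>0$. The aim is then to construct, on a fixed bounded Lipschitz domain $D\subset\R^d$, a sequence $\psi_j \in \Wrm_0^{1,\infty}(D;\R^d)$ whose symmetrized gradients $E\psi_j$ generate (as a homogeneous Young measure) the probability measure $\sum_i\lambda_i\delta_{M_i-a\odot b}$. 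Testing the symmetric-quasiconvexity of $h^\infty$ at the matrix $a\odot b$ along such a sequence and using that $h^\infty$ is locally Lipschitz (as noted following~\eqref{eq:sym_qc_aodotb_conv}) to pass to the limit, one would obtain
\[
h^\infty(a\odot b) \le \dashint_D h^\infty\bigl(a\odot b+E\psi_j(y)\bigr)\dd y \ \longrightarrow\ \sum_i \lambda_i h^\infty(M_i) \le h^\infty(a\odot b) - 2\eps,
\]
a contradiction.

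The main obstacle is the construction of these approximating test functions, which emphatically does \emph{not} work for a general barycenter: the only oscillations accessible to symmetrized gradients lie along the wave cone $\Lambda_{\Curl\Curl}=\setb{a'\odot b'}{a',b'\in\R^d}$ (see~\eqref{eq:Lambda}), as one sees from the plane-wave ansatz $\psi(x)=\eta\,G(x\cdot\xi)$ with $\{\xi,\eta\}=\{a',b'\}$ analysed in Section~\ref{sec:rigidity}. The key insight, exploiting precisely that the barycenter $a\odot b$ itself lies in the wave cone, is that one can chain such plane-wave oscillations in a staircase-laminate construction: the first splitting is carried out in the direction $a\odot b$; inside each resulting cell further splittings are made along symmetric rank-one directions so as to refine the induced piecewise-constant symmetrized gradient toward the prescribed convex combination; and the positive $1$-homogeneity of $h^\infty$ is used to absorb the errors incurred at each stage. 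The standard cut-off procedure then converts the resulting periodic construction into a compactly supported $\Wrm_0^{1,\infty}$-function on $D$ with the correct limiting Young measure. The heart of the theorem is this staircase-laminate argument, which is what genuinely distinguishes the symmetric rank-one barycenters from arbitrary points of $\Rdds$; once it is in place, the convex-analytic envelope reduction and the contradiction above close the proof in a routine way.
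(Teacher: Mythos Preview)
The paper does not prove this theorem: it is quoted from Kirchheim--Kristensen~\cite{KirchheimKristensen16} and used as a black box in the proof of Theorem~\ref{thm:auto_Jensen}. So there is no ``paper's own proof'' to compare against; the relevant benchmark is the argument in~\cite{KirchheimKristensen16}.

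Your reduction to $h^{**}(a\odot b)=h^\infty(a\odot b)$ and the extraction of a supporting affine $g$ from $h^{**}$ are fine and standard. You also correctly isolate the crux: manufacturing, for an \emph{arbitrary} convex combination $a\odot b=\sum_i\lambda_i M_i$, something that forces the Jensen inequality $h^\infty(a\odot b)\le\sum_i\lambda_i h^\infty(M_i)$. Where your proposal becomes a genuine gap is the actual construction. You claim a ``staircase-laminate'' with first split along $a\odot b$ and then further rank-one refinements ``toward the prescribed convex combination'', with $1$-homogeneity used to ``absorb the errors''. As written this does not explain how one reaches \emph{arbitrary} endpoints $M_i$ by rank-one moves while keeping control of the values of $h^\infty$; the difficulty is precisely that the $M_i$ need not lie on any rank-one segment through $a\odot b$, and an uncontrolled iterated lamination will not by itself yield the sharp inequality. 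In effect you are asserting that every probability measure on $\Rdds$ with barycenter $a\odot b$ is a homogeneous symmetric-gradient Young measure, which is (via Theorem~\ref{thm:BDY_charact}) essentially \emph{equivalent} to the statement you are trying to prove---so appealing to it here is circular unless the laminate is produced by hand.

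For comparison, the Kirchheim--Kristensen argument does \emph{not} build test functions. It works at the weaker level of separate convexity along the directions of the cone $\{a'\odot b'\}$ (which symmetric-quasiconvexity implies via~\eqref{eq:sym_qc_aodotb_conv}). Using only this directional convexity together with positive $1$-homogeneity, they show that the one-sided directional derivative $\eta\mapsto\limsup_{t\downarrow 0}t^{-1}\bigl(h^\infty(a\odot b+t\eta)-h^\infty(a\odot b)\bigr)$ is sublinear; Hahn--Banach then supplies the linear support. The $1$-homogeneity enters in a precise way here: since $a\odot b$ is itself a cone direction, one can move along the ray $\R_+\,(a\odot b)$ at zero cost and combine this with short rank-one steps to reach any direction $\eta$, which is what drives the sublinearity. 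This is the rigorous version of the ``absorb the errors'' intuition you allude to, but it is carried out at the level of difference quotients of $h^\infty$, not by constructing $\psi_j\in\Wrm_0^{1,\infty}$. If you want to salvage your route, you would need to replace the Young-measure step by this directional-derivative computation (or an equivalent explicit infinite-order laminate with quantified error bounds); as it stands, the proposal identifies the right ingredients but does not supply the argument.
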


\begin{proof}[Proof of Theorem~\ref{thm:auto_Jensen}]
We first establish the following general claim: Let $\mu \in \Mcal_1(\Rdds)$ be a probability measure with barycenter $[\mu] := \dpr{\id,\mu} = a \odot b$ for some $a,b \in \R^d$, and let $h \in \Crm(\Rdds)$ be positively $1$-homogeneous and symmetric-quasiconvex. Then,
\[
  h(a \odot b) = h([\mu]) \leq \dpr{h,\mu}.
\]
Indeed, by the preceding theorem, $h$ is actually \emph{convex} at matrices $a \odot b$, that is, the classical Jensen inequality holds for measures with barycenter $a \odot b$, such as our $\mu$. This shows the claim.

If $\nu = (\nu_x,\lambda_\nu,\nu_x^\infty) \in \BDY(\Omega)$, then there is a $u \in \BD(\Omega)$ such that
\[
  Eu = [\nu_x] \, \Lcal^d_x + [\nu_x^\infty] \, (\lambda_\nu \restrict \Omega)(\di x),
\]
so at $\lambda_\nu^s$-almost every $x \in \Omega$ we have by Theorem~\ref{thm:AlbertiBD} that $[\nu_x^\infty] = a(x) \odot b(x)$ for some $a(x), b(x) \in \R^d$. Thus, applying the claim above to $\nu_x^\infty$ immediately yields the singular Jensen-type inequality.
\end{proof}

Together, Theorems~\ref{thm:BDY_charact} and~\ref{thm:auto_Jensen} have the following remarkable interpretation: While there are constraints on the oscillations and concentrations making up the absolutely continuous part of a BD-Young measure $\nu$, the concentrations in the singular part are totally unconstrained besides the requirement that $[\nu_x^\infty] \, (\lambda_\nu^s \restrict \Omega)(\di x) = E^s u$ for some $u \in \BD(\Omega)$. In particular, \emph{any} probability measure $\mu \in \Mcal(\Rdds)$ with barycenter $[\mu] = \dpr{\id,\mu} = a \odot b$ for some $a,b \in \R^d$ occurs as the concentration-direction measure of a BD-Young measure.


\begin{thebibliography}{10}

\bibitem{Alberti93}
G.~Alberti, \emph{Rank one property for derivatives of functions with bounded
  variation}, Proc. Roy. Soc. Edinburgh Sect. A \textbf{123} (1993), 239--274.

\bibitem{AlibertBouchitte97}
J.~J. Alibert and G.~Bouchitt{\'e}, \emph{Non-uniform integrability and
  generalized {Y}oung measures}, J. Convex Anal. \textbf{4} (1997), 129--147.

\bibitem{AmbrosioCosciaDalMaso97}
L.~Ambrosio, A.~Coscia, and G.~{Dal~Maso}, \emph{Fine properties of functions
  with bounded deformation}, Arch. Ration. Mech. Anal. \textbf{139} (1997),
  201--238.

\bibitem{AmbrosioDalMaso92}
L.~Ambrosio and G.~{Dal~Maso}, \emph{On the relaxation in {${\rm
  BV}(\Omega;{\bf R}\sp m)$} of quasi-convex integrals}, J. Funct. Anal.
  \textbf{109} (1992), 76--97.

\bibitem{AmbrosioFuscoPallara00book}
L.~Ambrosio, N.~Fusco, and D.~Pallara, \emph{{Functions of Bounded Variation
  and Free-Discontinuity Problems}}, Oxford Mathematical Monographs, Oxford
  University Press, 2000.

\bibitem{AmroucheCiarletGratieKesavan06}
C.~Amrouche, Ph.~G. Ciarlet, L.~Gratie, and S.~Kesavan, \emph{On {S}aint
  {V}enant's compatibility conditions and {P}oincar\'{e}'s lemma}, C. R. Math.
  Acad. Sci. Paris \textbf{342} (2006), 887--891.

\bibitem{ArroyoRabasaDePhilippisHirschRindler19}
A.~{Arroyo-Rabasa}, G.~{De Philippis}, J.~Hirsch, and F.~Rindler,
  \emph{Dimensional estimates and rectifiability for measures satisfying linear
  {PDE} constraints}, Geom. Funct. Anal. \textbf{29} (2019), 639--658.

\bibitem{ArroyoRabasaDePhilippisRindler17?}
A.~{Arroyo-Rabasa}, G.~{De Philippis}, and F.~Rindler, \emph{Lower
  semicontinuity and relaxation of linear-growth integral functionals under
  {PDE} constraints}, Adv. Calc. Var. (2017), to appear, arXiv:1701.02230.

\bibitem{AuslenderTeboulle03book}
A.~Auslender and M.~Teboulle, \emph{{Asymptotic Cones and Functions in
  Optimization and Variational Inequalities}}, Springer Monographs in
  Mathematics, Springer, 2003.

\bibitem{Babadjian15}
J.-F. Babadjian, \emph{Traces of functions of bounded deformation}, Indiana
  Univ. Math. J. \textbf{64} (2015), 1271--1290.

\bibitem{BallJames87}
J.~M. Ball and R.~D. James, \emph{Fine phase mixtures as minimizers of energy},
  Arch. Ration. Mech. Anal. \textbf{100} (1987), 13--52.

\bibitem{BallKirchheimKristensen00}
J.~M. Ball, B.~Kirchheim, and J.~Kristensen, \emph{Regularity of quasiconvex
  envelopes}, Calc. Var. Partial Differential Equations \textbf{11} (2000),
  333--359.

\bibitem{BarrosoFonsecaToader00}
A.~C. Barroso, I.~Fonseca, and R.~Toader, \emph{A relaxation theorem in the
  space of functions of bounded deformation}, Ann. Sc. Norm. Super. Pisa Cl.
  Sci. \textbf{29} (2000), 19--49.

\bibitem{BellettiniCosciaDalMaso98}
G.~Bellettini, A.~Coscia, and G.~Dal~Maso, \emph{Compactness and lower
  semicontinuity properties in {${\rm SBD}(\Omega)$}}, Math. Z. \textbf{228}
  (1998), 337--351.

\bibitem{BoussaidKreisbeckSchlomerkemper19}
O.~Boussaid, C.~Kreisbeck, and A.~Schl\"{o}merkemper, \emph{Characterizations
  of symmetric polyconvexity}, Arch. Ration. Mech. Anal. \textbf{234} (2019),
  417--451.

\bibitem{BreitDieningGmeineder17?}
D.~Breit, L.~Diening, and F.~Gmeineder, \emph{On the trace operator for
  functions of bounded $\mathbb{A}$-variation}, to appear in Anal. PDE,
  arXiv:1707.06804.

\bibitem{CarocciaFocardiVanGoethem19?}
M.~Caroccia, M.~Focardi, and N.~{Van Goethem}, \emph{On the integral
  representation of variational functionals on $bd$}, arXiv:1907.11478, 2019.

\bibitem{ChambolleContiIurlano19}
A.~Chambolle, S.~Conti, and F.~Iurlano, \emph{Approximation of functions with
  small jump sets and existence of strong minimizers of {G}riffith's energy},
  J. Math. Pures Appl. \textbf{128} (2019), 119--139.

\bibitem{ChiodaroliFeireislKremlWiedemann17}
E.~Chiodaroli, E.~Feireisl, O.~Kreml, and E.~Wiedemann, \emph{{$\mathcal
  A$}-free rigidity and applications to the compressible {E}uler system}, Ann.
  Mat. Pura Appl. \textbf{196} (2017), 1557--1572.

\bibitem{ContiFaracoMaggi05}
S.~Conti, D.~Faraco, and F.~Maggi, \emph{A new approach to counterexamples to
  {$L\sp 1$} estimates: {K}orn's inequality, geometric rigidity, and regularity
  for gradients of separately convex functions}, Arch. Ration. Mech. Anal.
  \textbf{175} (2005), no.~2, 287--300.

\bibitem{ContiFocardiIurlano17}
S.~Conti, M.~Focardi, and F.~Iurlano, \emph{Integral representation for
  functionals defined on $sbd^p$ in dimension two}, Arch. Ration. Mech. Anal.
  \textbf{223} (2017), 1337--1374.

\bibitem{ContiFocardiIurlano18}
\bysame, \emph{Which special functions of bounded deformation have bounded
  variation?}, Proc. Roy. Soc. Edinburgh Sect. A \textbf{148} (2018), 33--50.

\bibitem{DeGiorgi54}
E.~{De Giorgi}, \emph{Su una teoria generale della misura
  {$(r-1)$}-dimensionale in uno spazio ad {$r$} dimensioni}, Ann. Mat. Pura
  Appl. \textbf{36} (1954), 191--213.

\bibitem{DeLellis08}
C.~De~Lellis, \emph{A note on {A}lberti's rank-one theorem}, Transport
  equations and multi-{D} hyperbolic conservation laws (Bologna, 2005), Lecture
  Notes of the Unione Matematica Italiana, vol.~5, Springer, 2008, pp.~61--74.

\bibitem{DePhilippis18}
G.~{De Philippis}, \emph{On the singular part of measures constrained by linear
  {PDE}s and applications}, European {C}ongress of {M}athematics, Eur. Math.
  Soc., Z\"{u}rich, 2018, pp.~833--845.

\bibitem{DePhilippisMarcheseRindler17}
G.~{De Philippis}, A.~Marchese, and F.~Rindler, \emph{On a conjecture of
  {C}heeger}, Measure Theory in Non-Smooth Spaces (N.~Gigli, ed.), De Gruyter,
  2017, pp.~145--155.

\bibitem{DePhilippisPalmieriRindler18}
G.~De~Philippis, L.~Palmieri, and F.~Rindler, \emph{On the two-state problem
  for general differential operators}, Nonlinear Anal. \textbf{177} (2018),
  no.~part B, 387--396.

\bibitem{DePhilippisRindler16}
G.~{De Philippis} and F.~Rindler, \emph{On the structure of $\mathcal{A}$-free
  measures and applications}, Ann. of Math. \textbf{184} (2016), 1017--1039.

\bibitem{DePhilippisRindler17}
\bysame, \emph{Characterization of generalized {Young} measures generated by
  symmetric gradients}, Arch. Ration. Mech. Anal. \textbf{224} (2017),
  1087--1125.

\bibitem{DePhilippisRindler18}
\bysame, \emph{{On the structure of measures constrained by linear PDEs}},
  Proceedings of the International Congress of Mathematicians (ICM 2018), 2018,
  pp.~2215--2239.

\bibitem{Ebobisse00}
F.~Ebobisse, \emph{On lower semicontinuity of integral functionals in
  {$LD(\Omega)$}}, Ricerche Mat. \textbf{49} (2000), no.~1, 65--76.

\bibitem{Ebobisse05}
\bysame, \emph{A lower semicontinuity result for some integral functionals in
  the space {SBD}}, Nonlinear Anal. \textbf{62} (2005), 1333--1351.

\bibitem{Federer69book}
H.~Federer, \emph{{Geometric Measure Theory}}, Grundlehren der mathematischen
  Wissenschaften, vol. 153, Springer, 1969.

\bibitem{FlemingRishel60}
W.~H. Fleming and R.~Rishel, \emph{An integral formula for total gradient
  variation}, Arch. Math. \textbf{11} (1960), 218--222.

\bibitem{FonsecaMuller92}
I.~Fonseca and S.~M{\"u}ller, \emph{Quasi-convex integrands and lower
  semicontinuity in {$L\sp 1$}}, SIAM J. Math. Anal. \textbf{23} (1992),
  1081--1098.

\bibitem{FonsecaMuller93}
\bysame, \emph{Relaxation of quasiconvex functionals in {${\rm BV}(\Omega,{\bf
  R}\sp p)$} for integrands {$f(x,u,\nabla u)$}}, Arch. Ration. Mech. Anal.
  \textbf{123} (1993), 1--49.

\bibitem{FonsecaMuller99}
\bysame, \emph{{$\mathcal{A}$}-quasiconvexity, lower semicontinuity, and
  {Y}oung measures}, SIAM J. Math. Anal. \textbf{30} (1999), no.~6, 1355--1390.

\bibitem{Friedrich17}
M.~Friedrich, \emph{A {K}orn-type inequality in {SBD} for functions with small
  jump sets}, Math. Models Methods Appl. Sci. \textbf{27} (2017), 2461--2484.

\bibitem{Friedrich18}
\bysame, \emph{A piecewise {K}orn inequality in {$SBD$} and applications to
  embedding and density results}, SIAM J. Math. Anal. \textbf{50} (2018),
  3842--3918.

\bibitem{FriedrichSolombrino19}
M.~Friedrich and F.~Solombrino, \emph{Functionals defined on piecewise rigid
  functions: {Integral} representation and {$\Gamma$}-convergence},
  arXiv:1904.06305, 2019.

\bibitem{FuchsSeregin00book}
M.~Fuchs and G.~Seregin, \emph{{Variational Methods for Problems from
  Plasticity Theory and for Generalized Newtonian Fluids}}, Lecture Notes in
  Mathematics, vol. 1749, Springer, 2000.

\bibitem{GargiuloZappale08}
G.~Gargiulo and E.~Zappale, \emph{A lower semicontinuity result in {SBD}}, J.
  Convex Anal. \textbf{15} (2008), 191--200.

\bibitem{GmeinederRaita18?}
F.~Gmeineder and B.~Raita, \emph{On critical $\mathrm{L}^p$-differentiability
  of $\mathrm{BD}$-maps}, to appear in Rev. Mat. Iberoam., arXiv:1802.10364,
  2018.

\bibitem{Grafakos14book1}
L.~Grafakos, \emph{{Classical Fourier Analysis}}, 3rd ed., Graduate Texts in
  Mathematics, vol. 249, Springer, 2014.

\bibitem{KinderlehrerPedregal91}
D.~Kinderlehrer and P.~Pedregal, \emph{Characterizations of {Y}oung measures
  generated by gradients}, Arch. Ration. Mech. Anal. \textbf{115} (1991),
  329--365.

\bibitem{KinderlehrerPedregal94}
\bysame, \emph{Gradient {Young} measures generated by sequences in {Sobolev}
  spaces}, J. Geom. Anal. \textbf{4} (1994), 59--90.

\bibitem{KirchheimKristensen11}
B.~Kirchheim and J.~Kristensen, \emph{Automatic convexity of rank-1 convex
  functions}, C. R. Math. Acad. Sci. Paris \textbf{349} (2011), 407--409.

\bibitem{KirchheimKristensen16}
\bysame, \emph{On rank-one convex functions that are homogeneous of degree
  one}, Arch. Ration. Mech. Anal. \textbf{221} (2016), 527--558.

\bibitem{Kohn79thesis}
R.~V. Kohn, \emph{New estimates for deformations in terms of their strains},
  Ph.D. thesis, Princeton University, 1979.

\bibitem{Kohn82}
\bysame, \emph{New integral estimates for deformations in terms of their
  nonlinear strains}, Arch. Ration. Mech. Anal. \textbf{78} (1982), 131--172.

\bibitem{KosibaRindler19?}
K.~Kosiba and F.~Rindler, \emph{On the relaxation of integral functionals
  depending on the symmetrized gradient}, arXiv:1903.05771.

\bibitem{KristensenRindler10}
J.~Kristensen and F.~Rindler, \emph{Characterization of generalized gradient
  {Young} measures generated by sequences in {W$\sp{1,1}$} and {BV}}, Arch.
  Ration. Mech. Anal. \textbf{197} (2010), 539--598, Erratum: Vol. 203 (2012),
  693-700.

\bibitem{Maggi12book}
F.~Maggi, \emph{Sets of finite perimeter and geometric variational problems},
  Cambridge Studies in Advanced Mathematics, vol. 135, Cambridge University
  Press, Cambridge, 2012.

\bibitem{MassaccesiVittone19}
A.~Massaccesi and D.~Vittone, \emph{An elementary proof of the rank-one theorem
  for {BV} functions}, J. Eur. Math. Soc. \textbf{21} (2019), 3255--3258.

\bibitem{MatthiesStrangChristiansen79}
H.~Matthies, G.~Strang, and E.~Christiansen, \emph{The saddle point of a
  differential program}, Energy methods in finite element analysis, Wiley,
  1979, pp.~309--318.

\bibitem{Mattila95book}
P.~Mattila, \emph{{Geometry of Sets and Measures in Euclidean Spaces}},
  Cambridge Studies in Advanced Mathematics, vol.~44, Cambridge University
  Press, 1995.

\bibitem{Muller92}
S.~M{\"u}ller, \emph{On quasiconvex functions which are homogeneous of degree
  {$1$}}, Indiana Univ. Math. J. \textbf{41} (1992), 295--301.

\bibitem{Muller99}
\bysame, \emph{Variational models for microstructure and phase transitions},
  Calculus of variations and geometric evolution problems (Cetraro, 1996),
  Lecture Notes in Mathematics, vol. 1713, Springer, 1999, pp.~85--210.

\bibitem{Murat78}
F.~Murat, \emph{Compacit\'e par compensation}, Ann. Sc. Norm. Super. Pisa Cl.
  Sci. \textbf{5} (1978), 489--507.

\bibitem{Murat79}
\bysame, \emph{Compacit\'e par compensation. {II}}, Proceedings of the
  {I}nternational {M}eeting on {R}ecent {M}ethods in {N}onlinear {A}nalysis
  ({R}ome, 1978), Pitagora Editrice Bologna, 1979, pp.~245--256.

\bibitem{Ornstein62}
D.~Ornstein, \emph{A non-inequality for differential operators in the
  {$L\sb{1}$} norm.}, Arch. Ration. Mech. Anal. \textbf{11} (1962), 40--49.

\bibitem{Preiss87}
D.~Preiss, \emph{Geometry of measures in {${\bf R}\sp n$}: distribution,
  rectifiability, and densities}, Ann. of Math. \textbf{125} (1987), 537--643.

\bibitem{Rindler11thesis}
F.~Rindler, \emph{{Lower Semicontinuity and Young Measures for Integral
  Functionals with Linear Growth}}, Ph.D. thesis, University of Oxford, 2011.

\bibitem{Rindler11}
\bysame, \emph{Lower semicontinuity for integral functionals in the space of
  functions of bounded deformation via rigidity and {Young} measures}, Arch.
  Ration. Mech. Anal. \textbf{202} (2011), 63--113.

\bibitem{Rindler12}
\bysame, \emph{Lower semicontinuity and {Young} measures in {BV} without
  {Alberti's Rank-One Theorem}}, Adv. Calc. Var. \textbf{5} (2012), 127--159.

\bibitem{Rindler14}
\bysame, \emph{A local proof for the characterization of {Young measures}
  generated by sequences in {BV}}, J. Funct. Anal. \textbf{266} (2014),
  6335--6371.

\bibitem{Rindler18book}
\bysame, \emph{{Calculus of Variations}}, Universitext, Springer, 2018.

\bibitem{SpectorVanSchaftingen18?}
D.~Spector and J.~{Van Schaftingen}, \emph{Optimal embeddings into lorentz
  spaces for some vector differential operators via gagliardo's lemma},
  arXiv:1811.02691, 2018.

\bibitem{Stein93book}
E.~M. Stein, \emph{{Harmonic Analysis}}, Princeton University Press, 1993.

\bibitem{Suquet78}
P.-M. Suquet, \emph{Existence et r\'egularit\'e des solutions des \'equations
  de la plasticit\'e}, C. R. Acad. Sci. Paris S\'er. A \textbf{286} (1978),
  1201--1204.

\bibitem{Suquet79}
\bysame, \emph{Un espace fonctionnel pour les \'equations de la plasticit\'e},
  Ann. Fac. Sci. Toulouse Math. \textbf{1} (1979), 77--87.

\bibitem{Tartar79}
L.~Tartar, \emph{Compensated compactness and applications to partial
  differential equations}, Nonlinear analysis and mechanics: {H}eriot-{W}att
  {S}ymposium, {V}ol. {IV}, Res. Notes in Math., vol.~39, Pitman, 1979,
  pp.~136--212.

\bibitem{Tartar83}
\bysame, \emph{The compensated compactness method applied to systems of
  conservation laws}, Systems of nonlinear partial differential equations
  ({O}xford, 1982), NATO Adv. Sci. Inst. Ser. C Math. Phys. Sci., vol. 111,
  Reidel, 1983, pp.~263--285.

\bibitem{Temam83book}
R.~Temam, \emph{Probl\`emes math\'{e}matiques en plasticit\'{e}}, M\'{e}thodes
  Math\'{e}matiques de l'Informatique [Mathematical Methods of Information
  Science], vol.~12, Gauthier-Villars, Montrouge, 1983.

\bibitem{TemamStrang80}
R.~Temam and G.~Strang, \emph{Functions of bounded deformation}, Arch. Ration.
  Mech. Anal. \textbf{75} (1980), 7--21.

\end{thebibliography}

\providecommand{\bysame}{\leavevmode\hbox to3em{\hrulefill}\thinspace}
\providecommand{\MR}{\relax\ifhmode\unskip\space\fi MR }
\providecommand{\MRhref}[2]{%
  \href{http://www.ams.org/mathscinet-getitem?mr=#1}{#2}
}
\providecommand{\href}[2]{#2}

\end{document}